\DeclareSymbolFont{bbold}{U}{bbold}{m}{n}
\DeclareSymbolFontAlphabet{\mathbbold}{bbold}
\newcommand{\Ni}{\mathbb{N}}
\newcommand{\Ri}{\mathbb{R}}
\newcommand{\C}{\mathbb{C}}
\newcommand{\Ci}{\mathbb{C}}
\newcommand{\loc}{\mathrm{loc}}
\DeclareMathOperator{\BD}{BD}
\DeclareMathOperator{\Tr}{Tr}
\DeclareMathOperator{\dom}{dom}
\DeclareMathOperator{\ran}{ran}
\DeclareMathOperator{\rge}{ran}
\DeclareMathOperator{\kar}{ker}
\DeclareMathOperator{\mul}{mul}
\DeclareMathOperator{\tr}{Tr}
\DeclareMathOperator{\dive}{div}
\newcommand{\divv}{{\mathop{\rm div}}}
\DeclareMathOperator{\grad}{grad}
\newcommand{\bull}[1]{\dot{#1}}
\DeclareMathAccent{\Circ}{\mathalpha}{operators}{"17}
\newcommand{\interior}[1]{\Circ{#1}}
\newcommand{\RRe}{\operatorname{Re}}
\newcommand{\IIm}{\operatorname{Im}}
\newcommand{\curl}{\operatorname{curl}}
\DeclareMathOperator{\DtN}{\Lambda}
\let\leq\leqslant
\let\geq\geqslant
\def\@row#1,{#1\@ifnextchar;{\@gobble}{&\@row}}
\def\@matrix{%
    \expandafter\@row\my@arg,;%
    \@ifnextchar({\\ \get@in@paren{\@matrix}}{\after@matrix}%
    }
\def\matrixtype#1#2#3{%
    \ifmmode\def\after@matrix{\end{#2}\right#3}%
    \else\def\after@matrix{\end{#2}\right#3$}$\fi
    \left#1\begin{#2}\get@in@paren{\@matrix}%
    }
\def\@column#1,{#1\@ifnextchar;{\@gobble}{\\ \@column}}
\newcommand\vect{}
\def\svect(#1){\left(\begin{smallmatrix}\@column#1,;\end{smallmatrix}\right)}
\def\vect{\get@in@paren{\@vect}}
\def\@vect{\left(\begin{matrix}\expandafter\@column\my@arg,;\end{matrix}\right)}
\def\get@in@paren#1({\def\my@arg{}\def\my@rest{}\def\after@get{#1}\get@arg}
\let\e@a\expandafter
\def\get@arg#1){\e@a\kl@test\my@rest#1(;}
\def\kl@test#1(#2;{\e@a\def\e@a\my@arg\e@a{\my@arg#1}%
                   \ifx:#2:\let\my@exec\after@get
                   \else\let\my@exec\get@arg
                        \e@a\def\e@a\my@arg\e@a{\my@arg(}%
                        \def@rest#2;%
                   \fi\my@exec}
\def\def@rest#1(;{\def\my@rest{#1\kl@zu}}
\def\kl@zu{)}
\newcommand\MyPairedDelimiter{%
  \@ifstar{\My@Paired@Delimiter{{}}}
          {\My@Paired@Delimiter{}}%
}
\newcommand\My@Paired@Delimiter[4]{%
  \newcommand#2{%
    \@ifstar{\start@PD{#1}{\delimitershortfall=-1sp}{#3}{#4}}
            {\start@PD{#1}{}{#3}{#4}}%
  }%
}
\newcommand\start@PD[5]{%
  #1\mathopen{\mathpalette\put@delim@helper{\put@delim{#2}{#3}{.}{#5}}}%
  #5%
  \mathclose{\mathpalette\put@delim@helper{\put@delim{#2}{.}{#4}{#5}}}%
}
\newcommand\put@delim@helper[2]{%
  \hbox{$\m@th\nulldelimiterspace=0pt #2#1$}%
}
\newcommand\put@delim[5]{%
  \setbox\z@\hbox{$\m@th#5{#4}$}%
  \setbox\tw@\null
  \ht\tw@\ht\z@ \dp\tw@\dp\z@
  #1#5%
  \left#2\box\tw@\right#3%
}
\MyPairedDelimiter*{\abs}{\lvert}{\rvert}
\MyPairedDelimiter*{\norm}{\lVert}{\rVert}
\MyPairedDelimiter{\set}{\{}{\}}
\theoremstyle{plain} 
\newtheorem{theorem}{Theorem}[section]
\newtheorem{thm}[theorem]{Theorem}
\newtheorem{cor}[theorem]{Corollary}
\newtheorem{lemma}[theorem]{Lemma}
\newtheorem{prop}[theorem]{Proposition}
\theoremstyle{definition}
\newtheorem{example}[theorem]{Example}
\newtheorem{exam}[theorem]{Example}
\newtheorem{definition}[theorem]{Definition}
\newtheorem{remark}[theorem]{Remark}
\newcounter{teller}
\newenvironment{tabel}{\begin{list}%
{\rm  (\alph{teller})\hfill}{\usecounter{teller} \leftmargin=1.1cm
\labelwidth=1.1cm \labelsep=0cm \parsep=0cm}
                      }{\end{list}}
\newcounter{tellerr}
\newenvironment{tabeleq}{\begin{list}%
{\rm  (\roman{tellerr})\hfill}{\usecounter{tellerr} \leftmargin=1.1cm
\labelwidth=1.1cm \labelsep=0cm \parsep=0cm}
                         }{\end{list}}
\DeclareMathAlphabet\gothic{U}{euf}{m}{n}
\newcommand{\gotb}{\gothic{b}}
\def\eqnarray{\stepcounter{equation}\let\@currentlabel=\theequation
\global\@eqnswtrue
\tabskip\@centering\let\\=\@eqncr
$$\halign to \displaywidth\bgroup\hfil\global\@eqcnt\z@
  $\displaystyle\tabskip\z@{##}$&\global\@eqcnt\@ne
  \hfil$\displaystyle{{}##{}}$\hfil
  &\global\@eqcnt\tw@ $\displaystyle{##}$\hfil
  \tabskip\@centering&\llap{##}\tabskip\z@\cr}
\def\endeqnarray{\@@eqncr\egroup
      \global\advance\c@equation\m@ne$$\global\@ignoretrue}
\def\@yeqncr{\@ifnextchar [{\@xeqncr}{\@xeqncr[5pt]}}
\newcommand{\cl}{{\cal L}}
\begin{document}
\bibliographystyle{tom}

\medmuskip=4mu plus 2mu minus 3mu
\thickmuskip=5mu plus 3mu minus 1mu
\belowdisplayshortskip=9pt plus 3pt minus 5pt

\title{The Dirichlet-to-Neumann operator for divergence form problems}

\author{A.F.M. ter Elst, G. Gordon and M. Waurick}

\date{\today}

\maketitle

\begin{abstract} 
We present a way of defining the Dirichlet-to-Neumann operator on
general Hilbert spaces using a pair of operators for which each one's 
adjoint is formally the negative of the other.
In particular, we define an abstract
analogue of trace spaces and are able to give meaning to the
Dirichlet-to-Neumann operator of divergence form operators perturbed
by a bounded potential in cases where the boundary of the underlying
domain does not allow for a well-defined trace.
Moreover, a
representation of the Dirichlet-to-Neumann operator as a first-order
system of partial differential operators is provided.
Using this
representation, we address  convergence of the Dirichlet-to-Neumann operators
in the case that the appropriate
reciprocals of the leading coefficients converge in the weak operator topology.
We also provide some extensions to the
case where the bounded potential is not coercive and consider resolvent 
convergence.
\end{abstract}

Keywords: Dirichlet-to-Neumann operator, resolvent convergence,
continuous dependence on the coefficients.

MSC 2010: 35F45, 46E35, 47A07.

\section{Introduction} \label{Sbdgd1}

In the theory of elliptic partial differential operators, the
Dirichlet-to-Neumann operator is a central object of study.
In recent
years it attracted a lot of attention and triggered profound research
in many directions.
In particular, we mention applications of the form
method, relations to the extension theory of symmetric operators as
well as the intimate connection to the Calder\'on problem, see, for
instance, the references in \cite{BeE1}.

The Dirichlet-to-Neumann operator relates Dirichlet
boundary data to the corresponding Neumann boundary data of solutions
to a partial differential equation.
As an introduction, we provide a
definition for the Dirichlet-to-Neumann operator in its
arguably simplest form.

Let $\Omega\subset \mathbb{R}^d$ be a bounded domain with smooth
boundary $\Gamma = \partial\Omega$ and where $d \geq 2$.
Note that in this case, the
trace map $\Tr$ from $H^1(\Omega)$ into $H^{1/2}(\Gamma)$ is
a well-defined, surjective and continuous operator.
Let $\varphi \in H^{1/2}(\Gamma)$ and let $u\in H^1(\Omega)$ be the solution of the
boundary value problem
\[
-\Delta u = 0 \mbox{ weakly on } \Omega
\quad \mbox{and} \quad \Tr u = \varphi.
\]
The Dirichlet-to-Neumann operator $\DtN$ assigns to $\varphi$ the normal
derivative of $u$, that is, 
$\DtN \varphi  = \partial_\nu u\in H^{-1/2}(\Gamma)$.

We can also consider the part of $\DtN$ in $L_2(\Gamma)$.
 If we call this restriction
$\DtN_{L_2(\Gamma)}$, then $\DtN_{L_2(\Gamma)}$ is an
unbounded operator in $L_2(\Gamma)$ such that for all $\varphi,\psi \in L_2(\Gamma)$
it follows that  $\varphi \in \dom(\DtN_{L_2(\Gamma)})$ and $\Lambda_{L_2(\Gamma)} \varphi = \psi$
if and only if
there exists a $u \in H^1(\Omega)$ such that $-\Delta u = 0$ weakly on $\Omega$,
$\Tr u = \varphi$ and $\psi = \partial_\nu u$.
A problem with the above descriptions is that they only make sense if the
boundary of $\Omega$ is sufficiently smooth.
We may also refer
to \cite{AE3} for a variant of the Dirichlet-to-Neumann operator for
domains with a rough boundary that has finite $(d-1)$-dimensional
Hausdorff measure.
If, however, $\Omega$ has for example a fractal
boundary with infinite $(d-1)$-dimensional Hausdorff measure, then 
in \cite{AE3} there is no notion of the Dirichlet-to-Neumann
operator at hand simply because there is no appropriate notion of a trace.
Using the concepts developed in \cite{PTW2} (with
extensions in \cite{PTW1} and \cite{Trostorff}), we are able to provide a 
substitute for the space $H^{1/2}(\Gamma)$.
We note here that this `trace-free' concept has proven to be useful
for dealing with boundary value problems on domains with rough boundary,
see \cite{PSTW}.

The substitute for the space $H^{1/2}(\Gamma)$ is a variant of $1$-harmonic
functions in $\Omega$.
This removes the need for function evaluation at the boundary.
For the definition of this substitute of $H^{1/2}(\Gamma)$, the only
concept that we use, if we relate our findings to the
Laplacian, is that the matrix 
$\bigl(\begin{smallmatrix} 0 & \divv \\ \grad & 0\end{smallmatrix}\bigr)$
is skew-symmetric on the space of infinite differentiable functions
with compact support, see Example~\ref{ex:classical}.
Thus, without further effort, our results directly apply to
similar problems involving the equations of linearized elasticity or
the full 3-dimensional system of static Maxwell's equations.
More
generally, our methods apply to the covariant derivative defined on
suitable $L_2$-tensor fields and a formal skew-adjoint.

As our central object of study, we shall deviate from the classical
elliptic partial differential operator $-\Delta$ discussed above and 
treat abstract divergence form operators of the form
\begin{equation}\label{eq:varcoef}
-DaG + m,
\end{equation}
where $a$ and $m$ are bounded coercive operators (called coefficients) 
and $D$ and $G$ are densely
defined, closed, unbounded operators in Hilbert spaces $H_1$ and $H_0$
with the property $-D^*\subset G$, like $\divv$ and $\grad$.

If $\dom(G)$, endowed with the graph norm, embeds compactly into $H_0$,
we will also address the concept of
continuous dependence of the Dirichlet-to-Neumann operator associated
with \eqref{eq:varcoef} on the bounded coefficients $a$ and $m$ under
the weak operator topology.
This result has applications in
homogenization problems, see \cite{Tartar} and \cite{Waurick1} Section~5.5.
Moreover, it complements the study of continuous dependence of
the Dirichlet-to-Neumann operator on its coefficients in \cite{AEKS},
where the authors focus on possible non-coercive cases and 
convergence of the principal coefficients in $L_\infty(\Omega)$.
In order to prove convergence results, 
we derive a reformulation of the Dirichlet-to-Neumann
operator as a system of two first-order partial differential
equations, similar to \cite{AKM}.

In the present work we also consider removing the coercivity condition on~$m$.
That is
to say, we define the abstract analogue of the Dirichlet-to-Neumann
\emph{graph} with $m$ being possibly not coercive.
We note here
that these results are the abstract counterpart of results developed
in \cite{BeE1} and \cite{AEKS}.
In the case that the potentials $m$ are not coercive
we consider resolvent convergence for Dirichlet-to-Neumann {\it operators}.
Under different assumptions convergence of Neumann-to-Dirichlet operators
was obtained in \cite{Rondi}.

We mention here that a possible non-linear variant of the
Dirichlet-to-Neumann operator, where the coercive operator $a$ is replaced
by a (strictly) maximal monotone relation, can be discussed using the
results of \cite{TrostorffWaurick}.
This however is beyond the scope of the
present manuscript and will be addressed in future work.

We briefly comment on the organization of the paper.
In Section~\ref{Sbdgd2}, we provide the basic functional analytic setting and
recall some notions and results of \cite{PTW2}, \cite{PTW1} and \cite{Trostorff}.
We then state the definition of the Dirichlet-to-Neumann operator in
the abstract setting discussed above.
We also provide an extensive
example that justifies this abstraction by relating it to the
classical formulation of the Dirichlet-to-Neumann operator.
In Section~\ref{Sbdgd3} we give a representation formula for the
Dirichlet-to-Neumann operator as a first-order system and show that
this operator is m-sectorial, provided both $m$ and~$a$ are coercive.
For this we use a representation result for operators given via forms,
see \cite{AE2}.
In Section~\ref{Sbdgd4} we prove resolvent convergence of the 
Dirichlet-to-Neumann operators when the coefficients 
converge in an appropriate weak operator topology.
Under some additional hypotheses we also obtain in Theorem~\ref{tbdgd403}
uniform convergence even though the
coefficients converge in the weak operator topology only.
In Section~\ref{Sbdgd5} we consider the non-coercive case and
discuss the domain and multi-valued parts of the
Dirichlet-to-Neumann \emph{graph} when $m$ is merely
assumed to be a bounded operator, that is not necessarily coercive.
Moreover, we also prove a convergence theorem for the non-coercive case
in Section~\ref{Sbdgd6}.
We conclude with two more examples in Section~\ref{Sbdgd7}.

\section{The Dirichlet-to-Neumann operator and boundary spaces} \label{Sbdgd2}

We start with a description of boundary data spaces as in \cite{PTW2} Subsection~5.2.
Throughout this paper fix Hilbert spaces $H_0$ and $H_1$.
Further, let $G$ be an operator in $H_0$ with values in $H_1$ and 
let $D$ be an operator in $H_1$ with values in $H_0$.
We assume throughout that both $G$ and $D$ are densely defined and closed, and that 
$-G^*\subset D$.
We define $\interior{D} = -G^*$ and $\interior{G} = -D^*$.

Note that 
\[
(\interior{G} u, q)_{H_1} 
= - (u, \interior{D} q)_{H_0}
\]
for all $u \in \dom(\interior{G})$ and $q \in \dom(\interior{D})$.
Equivalently, the matrix 
\[
\begin{pmatrix} 0 & \interior{D} \\ \interior{G} & 0\end{pmatrix}
\]
with dense domain $\dom(\interior{G}) \times \dom(\interior{D})$ is 
skew-symmetric in $H_0 \times H_1$.

\begin{remark} \label{rem:intG}
  Note that $\interior{G}= -D^* \subset -(-G^*)^*=\overline{G}=G$. 
So one can simultaneously swap $H_0$ with $H_1$ 
and $D$ with $G$.
\end{remark}

\begin{exam} \label{xbdgs201}
All examples in this paper are of the following type.
Let $H_0$ and $H_1$ be Hilbert spaces.
Consider dense subspaces $\dom(\widehat G) \subset H_0$ and 
$\dom(\widehat D) \subset H_1$.
Let $\widehat G \colon \dom(\widehat G) \to H_1$ and 
$\widehat D \colon \dom(\widehat D) \to H_0$ be two operators
such that 
\begin{equation}
(\widehat G u, q)_{H_1} 
= - (u, \widehat D q)_{H_0}
\label{exbdgs201;1}
\end{equation}
for all $u \in \dom(\widehat G)$ and $q \in \dom(\widehat D)$.
Equivalently, the matrix 
\[
\begin{pmatrix} 0 & \widehat D \\ \widehat G & 0\end{pmatrix}
\]
with dense domain $\dom(\widehat G) \times \dom(\widehat D)$ is 
skew-symmetric in $H_0 \times H_1$.

Then $\widehat G \subset - (\widehat D)^*$ and 
$\widehat D \subset - (\widehat G)^*$, so both $\widehat G$ and $\widehat D$
are closable.
Let $\interior{G}$ and $\interior{D}$ denote the closures.
Define $G = - (\interior{D})^*$ and $D = - (\interior{G})^*$.
Since $\interior{D}$ and $\interior{G}$ are closed, therefore closable, it follows that 
$G$ and $D$ are densely defined.
Obviously both $G$ and $D$ are closed.
Next $G^* = - (\interior{D})^{**} = - \interior{D}$ since $\interior{D}$ is closed
and similarly $D^* = - \interior{G}$.
Also $\interior{D} \subset - (\widehat G)^* = - (\interior{G})^*$, 
so $\interior{G} \subset - (\interior{D})^* = G$.
Similarly $\interior{D} \subset D$.
Then $G^* = - \interior{D} \subset - D$ as required.
\end{exam}

The classical example for this paper is as follows.
Note that we do not assume any condition on the boundary of~$\Omega$.

\begin{example} \label{ex:classical}
 Let $\Omega\subset \Ri^d$ be open.
Define $\widehat G \colon C_c^\infty(\Omega) \to L_2(\Omega)^d$ and 
$\widehat D \colon C_c^\infty(\Omega)^d \to L_2(\Omega)$ by
\[
\widehat G u = (\partial_1 u,\ldots,\partial_d u)
\quad \mbox{and} \quad 
\widehat D q = \sum_{k=1}^d \partial_k q_k
.  \]
Define $H_0 = L_2(\Omega)$ and $H_1 = L_2(\Omega)^d$.
Then (\ref{exbdgs201;1}) in Example~\ref{xbdgs201}
follows from integration by parts.
The associated operators are denoted by $G = \grad$, $\interior{G} = \interior{\grad}$,
$D = \divv$ and $\interior{D} = \interior{\divv}$.
It is not hard to show that $\dom(\interior{\grad}) = H^1_0(\Omega)$,
$\dom(\grad) = H^1(\Omega)$ and 
$\dom(\divv) = H_{\dive}(\Omega) = \{ q\in L_2(\Omega)^d : \dive q\in L_2(\Omega)\}$. 
\end{example}

We next define an (abstract) variant of the trace spaces $H^{1/2}(\Gamma)$ and 
$H^{-1/2}(\Gamma)$.
Throughout this paper we provide the domain of an operator with the graph norm.
Define
\[
  \BD(G) = \dom(\interior{G})^{\bot_{\dom(G)}} 
\quad \mbox{and} \quad
  \BD(D) = \dom(\interior{D})^{\bot_{\dom(D)}}.
\]
We provide $\BD(G)$ and $\BD(D)$ with the induced inner products of 
$\dom(G)$ and $\dom(D)$.
We denote by $\pi_{\BD(G)}$ and $\pi_{\BD(D)}$ the corresponding projections onto 
$\BD(G)$ and $\BD(D)$, respectively.

\begin{example} \label{xbdgd210}
Let $\Omega$, $G$ and $D$ be as in Example~\ref{ex:classical}.
Then $\BD(G) = \{ u \in H^1(\Omega) : \Delta u = u \mbox{ weakly on } \Omega \} $.
Indeed, let $u \in \BD(G)$. 
Then $u \in H^1(\Omega)$ and 
$0 = (u,v)_{\dom(G)} = (u,v)_{L_2(\Omega)} + (\grad u,\grad v)_{L_2(\Omega)}$
for all $v \in \dom(\interior{G}) = H^1_0(\Omega)$.
So $\Delta u = u$ weakly on $\Omega$.
The converse inclusion is similar.
\end{example}

\begin{lemma} \label{l:bd} 
$\BD(G)=\kar(I-DG)$ and $\BD(D)=\kar(I-GD)$.
\end{lemma}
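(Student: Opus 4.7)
The plan is to unravel the definitions and reduce the orthogonality condition defining $\BD(G)$ to the adjoint characterization of $\dom(D)$; the identity for $\BD(D)$ then follows by the symmetry noted in Remark~\ref{rem:intG}. So I focus on proving $\BD(G)=\kar(I-DG)$.

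First I would take $u\in\dom(G)$ and write out the membership $u\in\BD(G)$: it means $(u,v)_{H_0}+(Gu,Gv)_{H_1}=0$ for every $v\in\dom(\interior{G})$. Since $\interior{G}\subset G$ (by definition $\interior{G}=-D^*\subset G$ because $G^*\subset -D$ is equivalent to $-D^*\subset G$), we may replace $Gv$ by $\interior{G}v=-D^*v$. The condition thus becomes
\[
(Gu, D^*v)_{H_1}=(u,v)_{H_0}\qquad\text{for all }v\in\dom(D^*).
\]

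Next I would read this off as an adjoint membership. By definition of the adjoint, the above identity says exactly that $Gu\in\dom(D^{**})$ with $D^{**}(Gu)=u$. Since $D$ is densely defined and closed, $D^{**}=D$, so the condition becomes $Gu\in\dom(D)$ and $DGu=u$, that is, $u\in\dom(DG)$ and $(I-DG)u=0$. Conversely, any $u\in\kar(I-DG)$ lies in $\dom(G)$, satisfies $Gu\in\dom(D)$ and retracing the calculation shows orthogonality to every $v\in\dom(\interior{G})$ in the graph inner product. This gives $\BD(G)=\kar(I-DG)$.

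Finally, the second identity $\BD(D)=\kar(I-GD)$ is obtained by applying exactly the same argument with the roles of $H_0$ and $H_1$, and of $G$ and $D$, interchanged; this is legitimate by Remark~\ref{rem:intG}. There is no real obstacle here: the only delicate point is making sure one uses $\interior{G}\subset G$ to replace $Gv$ by $-D^*v$ and invokes closedness of $D$ to identify $D^{**}$ with $D$.
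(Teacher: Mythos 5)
Your proof is correct and follows essentially the same route as the paper: you rewrite the orthogonality $u\perp\dom(\interior{G})$ in the graph inner product and read it as an adjoint statement, the paper via $Gu\in\dom((\interior{G})^*)=\dom(D)$ with $(\interior{G})^*Gu=-u$, you via $Gu\in\dom(D^{**})=\dom(D)$ with $D^{**}Gu=u$, which is the same computation since $(\interior{G})^*=(-D^*)^*=-D^{**}=-D$. The second identity is obtained in both cases by the swap justified in Remark~\ref{rem:intG}, so there is nothing further to add.
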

\begin{proof}
By Remark~\ref{rem:intG} it suffices to prove the first equality.
Let $u\in \BD(G)$.
Then 
\[
(u,v)_{H_0}+(Gu,\interior{G}v)_{H_1}
= (u,v)_{\dom(G)}
=0
\]
for all $v \in \dom(\interior{G})$.
So $Gu \in \dom((\interior{G})^*) = \dom(D)$ and 
$D G u = - (\interior{G})^* G u = u$.
Therefore $u \in \kar(I-DG)$.
The converse follows similarly.
\end{proof}

\begin{cor} \label{cbdgd201}
If $u \in \BD(G)$, then $Gu \in \BD(D)$.
If $q \in \BD(D)$, then $Dq \in \BD(G)$.
\end{cor}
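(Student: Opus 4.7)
The plan is to derive both statements directly from Lemma~\ref{l:bd}, which recasts membership in $\BD(G)$ and $\BD(D)$ as the kernel conditions $DGu=u$ and $GDq=q$ respectively. By the symmetry noted in Remark~\ref{rem:intG}, it suffices to prove the first statement and obtain the second by swapping the roles of the pair $(H_0,G)$ and $(H_1,D)$.

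So suppose $u\in \BD(G)$. By Lemma~\ref{l:bd}, $u\in \kar(I-DG)$; in particular $u\in\dom(DG)$, which by the usual convention on products of unbounded operators means $u\in\dom(G)$, $Gu\in\dom(D)$, and $DGu=u$. Now I want to verify that $Gu$ lies in $\dom(GD)$ and that $GD(Gu)=Gu$. The first part is immediate: $Gu\in\dom(D)$ has already been established, and $D(Gu)=u$ lies in $\dom(G)$ by assumption. Applying $G$ to both sides of $DGu=u$ yields $GD(Gu)=Gu$, so $Gu\in\kar(I-GD)=\BD(D)$ by Lemma~\ref{l:bd} again.

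For the second statement, one can either repeat the argument verbatim with $G$ and $D$ interchanged, or invoke Remark~\ref{rem:intG} to transfer the first statement to the swapped setting. Either way, if $q\in\BD(D)=\kar(I-GD)$, then $Dq\in\dom(G)$, $GDq=q$, and applying $D$ gives $DG(Dq)=Dq$, so $Dq\in\kar(I-DG)=\BD(G)$.

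There is no real obstacle here: the corollary is essentially a mechanical consequence of Lemma~\ref{l:bd}. The only point that requires a moment of care is tracking the domain conditions in the products $DG$ and $GD$ so that the manipulation $GD(Gu)=G(DGu)=Gu$ is legitimate, but once $u\in\dom(DG)$ has been unpacked, the inclusion $Gu\in\dom(GD)$ is forced by $DGu=u\in\dom(G)$.
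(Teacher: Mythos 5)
Your proposal is correct and follows essentially the same route as the paper: use Lemma~\ref{l:bd} to get $DGu=u$, check the domain condition to see that $Gu\in\dom(GD)$, apply $G$ to obtain $(I-GD)Gu=0$, and conclude $Gu\in\BD(D)$, with the second statement by symmetry. No issues.
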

\begin{proof}
Let $u \in \BD(G)$. 
Then $u \in \dom(DG)$ and $DGu = u \in \dom(DG)$.
Therefore $u \in \dom(GDG)$ and 
$(I - GD) Gu = G(I - DG) u = 0$.
So $Gu \in \kar(I-GD) = \BD(D)$ by Lemma~\ref{l:bd}.
The other statement follows similarly.
\end{proof}

Define $\bull{G} \colon \BD(G)\to \BD(D)$ and $\bull{D}\colon \BD(D)\to \BD(G)$ by
\[
\bull{G} u = Gu
\quad \mbox{and} \quad 
\bull{D} q = Dq
.  \]

\begin{lemma} \label{l:Gdot} 
The operators $\bull{G}$ and $\bull{D}$ are unitary.
Moreover, $(\bull{G})^*=\bull{D}$.
\end{lemma}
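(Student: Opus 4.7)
The plan is to exploit Lemma~\ref{l:bd}, which characterizes $\BD(G)=\kar(I-DG)$ and $\BD(D)=\kar(I-GD)$, together with Corollary~\ref{cbdgd201}, which ensures that $\bull{G}$ and $\bull{D}$ really take values in the spaces claimed. The key identity to keep in mind is that every $u\in\BD(G)$ satisfies $DGu=u$ and every $q\in\BD(D)$ satisfies $GDq=q$.

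First I would verify that $\bull{D}$ and $\bull{G}$ are mutually inverse. For $u\in\BD(G)$ we have $\bull{G}u=Gu\in\BD(D)$ by Corollary~\ref{cbdgd201}, and then $\bull{D}\bull{G}u=DGu=u$ by Lemma~\ref{l:bd}. The symmetric argument gives $\bull{G}\bull{D}=I_{\BD(D)}$. In particular both maps are bijections between $\BD(G)$ and $\BD(D)$.

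Next I would check that $\bull{G}$ is isometric with respect to the graph-norm inner products. For $u,v\in\BD(G)$, using again that $DGu=u$ and $DGv=v$,
\[
(\bull{G}u,\bull{G}v)_{\dom(D)}
= (Gu,Gv)_{H_1}+(DGu,DGv)_{H_0}
= (Gu,Gv)_{H_1}+(u,v)_{H_0}
= (u,v)_{\dom(G)}.
\]
Combined with the previous paragraph, $\bull{G}$ is a surjective isometry, hence unitary; the same argument (or Remark~\ref{rem:intG}) shows $\bull{D}$ is unitary. The adjoint identity $(\bull{G})^*=\bull{D}$ is then immediate, since for a unitary map the inverse coincides with the Hilbert-space adjoint, and we already showed $\bull{G}^{-1}=\bull{D}$.

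There is no real obstacle here; the only thing to be careful about is using the correct inner products on $\BD(G)$ and $\BD(D)$, which are the \emph{graph} inner products inherited from $\dom(G)$ and $\dom(D)$ (as declared immediately before the statement), so the cross term $(DGu,DGv)_{H_0}$ collapses to $(u,v)_{H_0}$ exactly because $u,v$ lie in $\BD(G)=\kar(I-DG)$.
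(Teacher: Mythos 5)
Your proposal is correct and follows essentially the same route as the paper: both first use Lemma~\ref{l:bd} and Corollary~\ref{cbdgd201} to get $\bull{D}\bull{G}=I_{\BD(G)}$ and $\bull{G}\bull{D}=I_{\BD(D)}$, and then perform the same graph-inner-product computation, the only cosmetic difference being that you verify the isometry identity $(\bull{G}u,\bull{G}v)_{\BD(D)}=(u,v)_{\BD(G)}$ and deduce $(\bull{G})^*=\bull{D}$ from unitarity, whereas the paper verifies the adjoint identity $(\bull{G}u,q)_{\BD(D)}=(u,\bull{D}q)_{\BD(G)}$ directly (your version is the paper's with $q=\bull{G}v$).
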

\begin{proof}
See \cite[Theorem 5.2]{PTW2}.
For the convenience of the reader we include the proof.
Clearly $\bull{D}\bull{G}=I_{\BD(G)}$ and $\bull{G}\bull{D}=I_{\BD(D)}$ by 
Lemma~\ref{l:bd}.
Moreover, 
\[
(\bull{G}u,q)_{\BD(D)}
= (\bull{G}u,q)_{H_1} + (\bull{D}\bull{G}u,\bull{D}q)_{H_0} 
= (\bull{G}u,\bull{G}\bull{D}q)_{H_1} + (u,\bull{D}q)_{H_0}
= (u,\bull{D}q)_{\BD(G)}
\]
for all $u\in \BD(G)$ and $q\in \BD(D)$,
from which the lemma follows.
\end{proof}

In the situation of Example~\ref{ex:classical}
the space $\BD(G)$ models the boundary data of an $H^1(\Omega)$-function
if $\Omega$ is a bounded Lipschitz domain, as shown 
in \cite[Corollary 4.4]{Trostorff}.
Indeed, let $\Gamma = \partial \Omega$.
Since $\Tr \colon H^1(\Omega) \to H^{1/2}(\Gamma)$ is 
continuous, surjective and $\ker \Tr = H^1_0(\Omega) = \dom(\interior{\grad})$,
it follows that 
\begin{equation}
\Tr|_{\BD(G)} \colon \BD(G) \to H^{1/2}(\Gamma)
\label{eSbdgd2;1}
\end{equation}
is bijective
and hence a topological isomorphism.

We next consider the space $\BD(D)$.
Denote by $\BD(G)'$ the space of all antilinear continuous 
maps from $\BD(G)$ into~$\Ci$.
There is a natural unitary map from $\BD(D)$ onto $\BD(G)'$.

\begin{prop} \label{pbdgd202}
Define $\Phi \colon \BD(D) \to \BD(G)'$ by 
\[
\Big( \Phi(q) \Big) (u)
= (D q, u)_{H_0} + (q, G u)_{H_1}
.  \]
Then $\Phi$ is unitary.
\end{prop}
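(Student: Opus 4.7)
The plan is to identify $\Phi$ as the composition of the unitary $\bull{D}$ from Lemma~\ref{l:Gdot} with the canonical Riesz isomorphism $R \colon \BD(G) \to \BD(G)'$ given by $R(f) = (f,\cdot)_{\BD(G)}$. Once this identification is in place the proposition is immediate, since both factors are unitary, provided we tacitly regard $\BD(G)'$ as a Hilbert space with the inner product transported from $\BD(G)$ via $R$.

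First I would check that $\Phi(q) \in \BD(G)'$. Antilinearity of $\Phi(q)$ in $u$ is immediate from the conjugate-linearity of the inner products in their second argument, while continuity is a direct application of the Cauchy--Schwarz inequality,
\[
\abs{\Phi(q)(u)}
\le \norm{Dq}_{H_0}\norm{u}_{H_0} + \norm{q}_{H_1}\norm{Gu}_{H_1}
\le \norm{q}_{\dom(D)}\norm{u}_{\dom(G)}.
\]
The crucial computation is then the following rewrite: for $q \in \BD(D)$ and $u \in \BD(G)$, using $Dq = \bull{D}q \in \BD(G)$, $Gu = \bull{G}u$, and $\bull{G}\bull{D}q = q$ from Lemma~\ref{l:Gdot},
\[
\Phi(q)(u)
= (\bull{D}q,u)_{H_0} + (\bull{G}\bull{D}q,\bull{G}u)_{H_1}
= (\bull{D}q,u)_{\BD(G)}.
\]

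This display shows $\Phi = R \circ \bull{D}$. Since $\bull{D} \colon \BD(D) \to \BD(G)$ is unitary by Lemma~\ref{l:Gdot} and $R$ is a unitary linear bijection by the Riesz representation theorem, $\Phi$ is unitary as claimed. I do not anticipate any substantive obstacle; the only point worth flagging is that one must interpret $\BD(G)'$ as a Hilbert space via its canonical Riesz inner product in order for the word \emph{unitary} to make sense, after which the entire argument reduces to the two-line computation above.
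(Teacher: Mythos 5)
Your proof is correct and follows essentially the same route as the paper: the paper rewrites $\Phi(q)(u)=(q,\bull{G}u)_{\BD(D)}$ using $DGu=u$ and invokes Lemma~\ref{l:Gdot} plus the Riesz representation theorem, while you rewrite it as $(\bull{D}q,u)_{\BD(G)}$ using $GDq=q$, which is the mirror-image of the same identity via $(\bull{G})^*=\bull{D}$. The factorization $\Phi=R\circ\bull{D}$ and your remark about transporting the Hilbert structure to $\BD(G)'$ are fine and match the paper's implicit conventions.
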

\begin{proof}
Let $q \in \BD(D)$ and $u \in \BD(G)$.
Then 
\begin{eqnarray}
\Big( \Phi(q) \Big) (u)
& = & (D q, u)_{H_0} + (q, G u)_{H_1}  \nonumber  \\
& = & (q, G u)_{H_1} + (D q, D G u)_{H_0}
= (q, G u)_{\dom(D)}
= (q, \bull{G} u)_{\BD(D)}
.  
\label{etbdgd202;1}
\end{eqnarray}
Then the proposition follows from Lemma~\ref{l:Gdot} and the Riesz representation 
theorem.
\end{proof}

For clarity and contrast we include the proof of the next proposition.
We provide $\Tr H^1(\Omega)$ with the quotient norm.

\begin{prop} \label{pbdgd202.5}
Let $\Omega \subset \Ri^d$ be open, bounded with Lipschitz boundary.
Then one has the following.
\begin{tabel}
\item \label{pbdgd202.5-1}
For all $q \in H_\divv(\Omega)$ there exists a unique $Q \in (\Tr H^1(\Omega))'$
such that 
\begin{equation}
\langle Q, \Tr u \rangle_{(\Tr H^1(\Omega))' \times \Tr H^1(\Omega)}
= \int_\Omega (\divv q) \overline u + \int_\Omega q \cdot \overline{\nabla u}
\label{epbdgd202.5;1}
\end{equation}
for all $u \in H^1(\Omega)$.
\item \label{pbdgd202.5-2}
If $q \in \dom(\interior{\divv})$, then $Q = 0$, where $Q$ is as in (\ref{epbdgd202.5;1}).
\item \label{pbdgd202.5-3}
If $q \in H^1(\Omega)^d$, then $Q = \nu \cdot \Tr q$,
where $\nu$ is the outward normal vector on the boundary $\Gamma$ of $\Omega$
and $Q$ is as in (\ref{epbdgd202.5;1}).
\end{tabel}
\end{prop}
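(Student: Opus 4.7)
The plan is to build $Q$ in part (a) as the antilinear continuous functional on $\Tr H^1(\Omega)$ induced by the sesquilinear form
\[
B(u,q) := \int_\Omega (\divv q)\,\overline u + \int_\Omega q \cdot \overline{\nabla u}
\qquad (u \in H^1(\Omega),\; q \in H_\divv(\Omega))
\]
on the quotient $H^1(\Omega)/H^1_0(\Omega) \cong \Tr H^1(\Omega)$. Parts (b) and (c) then reduce to identifying this functional in two special cases: $Q=0$ when $q\in\dom(\interior{\divv})$, and $Q=\nu\cdot\Tr q$ when $q\in H^1(\Omega)^d$ via Gauss--Green.

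For (a) I would verify two properties of $B$. First, well-definedness on the quotient: if $u_1-u_2\in H^1_0(\Omega)=\dom(\interior{\grad})$ and $q\in\dom(\divv)$, then the inclusion $\divv\subset -(\interior{\grad})^*$ (which is precisely the hypothesis $-G^*\subset D$ of Section~\ref{Sbdgd2}) gives $(u_1-u_2,\divv q)_{L_2(\Omega)}=-(\interior{\grad}(u_1-u_2),q)_{L_2(\Omega)^d}$, and taking complex conjugates yields $B(u_1,q)=B(u_2,q)$. Second, continuity with bound $\|q\|_{H_\divv}$: Cauchy--Schwarz gives $|B(u,q)|\leq\|q\|_{H_\divv}\|u\|_{H^1}$, and infimizing over representatives of a fixed trace produces $|\langle Q,\Tr u\rangle|\leq\|q\|_{H_\divv}\|\Tr u\|_{\Tr H^1(\Omega)}$. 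Uniqueness of $Q$ is automatic since $\Tr$ is surjective onto $\Tr H^1(\Omega)$.

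For (b), the condition $q\in\dom(\interior{\divv})=\dom(-\grad^*)$ means $q\in\dom(\grad^*)$ with $\grad^* q=-\interior{\divv} q=-\divv q$ (using $\interior{\divv}\subset\divv$). Hence $(\nabla u,q)_{L_2(\Omega)^d}=-(u,\divv q)_{L_2(\Omega)}$ for every $u\in H^1(\Omega)=\dom(\grad)$, and conjugating both sides gives $B(u,q)=0$; thus $Q=0$. For (c), on a bounded Lipschitz domain the classical Gauss--Green identity applied to $q\in H^1(\Omega)^d$ and $u\in H^1(\Omega)$ reads
\[
\int_\Omega (\divv q)\,\overline u + \int_\Omega q \cdot \overline{\nabla u} = \int_\Gamma (\nu\cdot\Tr q)\,\overline{\Tr u}\, d\sigma,
\]
so comparison with~(\ref{epbdgd202.5;1}) identifies $Q$ with the element $\nu\cdot\Tr q\in L_2(\Gamma)\subset(\Tr H^1(\Omega))'$. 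The only mildly subtle step is the quotient-norm passage in (a); (b) is a direct restatement of the adjoint relation, and (c) is the standard Lipschitz-domain Gauss--Green formula, so no serious obstacle is expected.
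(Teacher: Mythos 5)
Your proof is correct and follows essentially the same route as the paper: part (a) factors the continuous functional $u\mapsto\int_\Omega(\divv q)\overline u+\int_\Omega q\cdot\overline{\nabla u}$ through the quotient by $H^1_0(\Omega)$, part (b) is the adjoint relation $\interior{\divv}=-\grad^*$, and part (c) is the divergence theorem on a Lipschitz domain (the paper makes this explicit by noting $\overline u\,q\in W^{1,1}(\Omega)^d$). Only a cosmetic remark: the identity you use in (a), namely $(u,\divv q)_{L_2}=-(\interior{\grad}u,q)_{L_2^d}$ for $u\in H^1_0(\Omega)$ and $q\in\dom(\divv)$, comes from $\interior{\grad}=-\divv^*$ (i.e.\ $D=-(\interior G)^*$) rather than literally from the hypothesis $-G^*\subset D$, though of course both hold in this setting.
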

\begin{proof}
`\ref{pbdgd202.5-1}'.
Define $F \colon H^1(\Omega) \to \Ci$ by
\[
F(u) = \int_\Omega (\divv q) \overline u + \int_\Omega q \cdot \overline{\nabla u}
.  \]
Then $F \in H^1(\Omega)'$.
Moreover, if $u \in H^1_0(\Omega)$, then $F(u) = 0$.
Hence there exists a unique continuous antilinear map 
$\widetilde F \colon \Tr H^1(\Omega) \to \Ci$ such that 
$\widetilde F(\Tr u) = F(u)$ for all $u \in H^1(\Omega)$.
Then the first statement follows.

`\ref{pbdgd202.5-2}'.
We use the notation as in Example~\ref{ex:classical}.
Let $q \in \dom(\interior{D})$.
Since $\interior{D} = - G^*$ one deduces that 
$F(u) = \int_\Omega (\divv q) \overline u + \int_\Omega q \cdot \overline{\nabla u}
= (\interior{D} q, u)_{H_1} + (q,G u)_{H_0} = 0$
for all $u \in \dom(G)$.
So $Q = 0$, because $\dom(G)$ is dense in $H^1(\Omega)$.

`\ref{pbdgd202.5-3}'.
Suppose that $q \in H^1(\Omega)^d$.
Let $u \in H^1(\Omega)$.
Then $\overline u q \in W^{1,1}(\Omega)^d$ and the divergence theorem gives
\[
\int_\Omega (\divv q) \overline u + \int_\Omega q \cdot \overline{\nabla u}
= \int_\Omega \divv(\overline u q)
= \int_\Gamma \nu \cdot \Tr(\overline u q)
= \int_\Gamma (\nu \cdot \Tr q) \Tr \overline u
.  \]
So $Q = \nu \cdot \Tr q$.
\end{proof}

If $q \in H_\divv(\Omega)$ and $Q$ is as in Proposition~\ref{pbdgd202.5}, 
then we define $(\nu q) = Q$.
So $(\nu q) = \nu \cdot \Tr q$ if $q \in H^1(\Omega)^d$.

\begin{exam} \label{xbdgd202.6}
Let $\Omega$ be a bounded Lipschitz domain with boundary $\Gamma$.
Let $G$ and $D$ be as in Example~\ref{ex:classical}.
Let $\Phi$ be as in Proposition~\ref{pbdgd202}.
Then 
\[
(\Phi(q))(u) = \langle (\nu q), \Tr u \rangle_{(\Tr H^1(\Omega))' \times \Tr H^1(\Omega)}
\]
for all $q \in \BD(D)$ and $u \in \BD(G)$.

It follows from (\ref{eSbdgd2;1}) and Proposition~\ref{pbdgd202}
that the spaces $\BD(D)$ and $H^{-1/2}(\Gamma)$
are isomorphic.
Hence $\bull{G}$ is a variant of the Dirichlet-to-Neumann operator.
\end{exam}

Next we introduce the (variable) coefficients for our abstract Dirichlet-to-Neumann operator.
Recall that a bounded operator $M$ in a Hilbert space $H$ is called {\bf coercive}
if there exists a $\mu > 0$ such that $\RRe M \geq \mu I$, where $\RRe M = \frac{1}{2} (M + M^*)$.
That is $M$ is coercive if and only if there exists a $\mu > 0$ such that 
$\RRe (Mx,x) \geq \mu \|x\|_H^2$ for all $x \in H$.

As for the classical Dirichlet-to-Neumann operator, we first show that the 
Dirichlet problem has a unique solution.

\begin{prop} \label{pbdgd203}
Let $a\in \cl(H_1)$ and $m \in \cl(H_0)$ be coercive.
Let $u_0\in \BD(G)$.
Then there exists a unique $u\in \dom(DaG)$ such that
$mu -DaGu=0$ and $u-u_0\in \dom(\interior{G})$.
\end{prop}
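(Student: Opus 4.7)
The plan is to reduce the problem to a Lax--Milgram argument on $\dom(\interior{G})$ equipped with its graph inner product. First I would make the standard Dirichlet lift: set $u = u_0 + v$ with the unknown $v$ ranging over $\dom(\interior{G})$, so that the boundary condition $u - u_0 \in \dom(\interior{G})$ is built in. Because $u_0 \in \BD(G) \subset \dom(G)$ and $\interior{G} \subset G$ (the latter following from $-G^* \subset D$), the expressions $Gu$, $Gv$, $Gu_0$ all make sense in $H_1$.

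Next I would introduce the sesquilinear form $\form \colon \dom(\interior{G}) \times \dom(\interior{G}) \to \C$ and the antilinear functional $F$ given by
\[
\form(v,w) = (mv,w)_{H_0} + (aGv,Gw)_{H_1},
\qquad
F(w) = -(mu_0,w)_{H_0} - (aGu_0,Gw)_{H_1}.
\]
Continuity of $\form$ on $\dom(\interior{G})$ (with the graph norm) is immediate from boundedness of $a$ and $m$, while coercivity of $a$ and $m$ yields $\RRe \form(v,v) \geq \mu(\|v\|_{H_0}^2 + \|Gv\|_{H_1}^2)$ for some $\mu > 0$, i.e.\ coercivity on $\dom(\interior{G})$. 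Continuity of $F$ follows from $u_0 \in \dom(G)$. Lax--Milgram then produces a unique $v \in \dom(\interior{G})$ with $\form(v,w) = F(w)$ for all $w \in \dom(\interior{G})$.

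Setting $u = u_0 + v$, the identity $\form(v,w)=F(w)$ becomes $(aGu,Gw)_{H_1} = -(mu,w)_{H_0}$ for every $w \in \dom(\interior{G})$. Since $Gw = \interior{G} w$ on $\dom(\interior{G})$, this is exactly the statement that $aGu \in \dom((\interior{G})^*)$ with $(\interior{G})^*(aGu) = -mu$. Using $(\interior{G})^* = (-D^*)^* = -D$ (by closedness of $D$), we obtain $aGu \in \dom(D)$, hence $u \in \dom(DaG)$, and $DaGu = mu$, as required.

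Uniqueness follows by a direct energy argument: if $u_1,u_2$ both satisfy the conclusion, then $v := u_1 - u_2 \in \dom(\interior{G})$ with $aGv \in \dom(D)$ and $mv - DaGv = 0$; pairing against $v$ and using $\interior{G} = -D^*$ to shift $D$ across gives $(mv,v)_{H_0} + (aGv,Gv)_{H_1} = 0$, and coercivity forces $v = 0$. The only real care point in the argument is the bookkeeping around $\interior{G} \subset G$ and the identification $(\interior{G})^* = -D$; once those are in hand, the proof is a textbook Lax--Milgram application.
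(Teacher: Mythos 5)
Your proof is correct, but it takes a genuinely different route from the paper. You solve the lifted problem variationally: Lax--Milgram for the coercive sesquilinear form $(mv,w)_{H_0}+(aGv,Gw)_{H_1}$ on the Hilbert space $\dom(\interior{G})$ (closed since $\interior{G}=-D^*$ is an adjoint), followed by the identification $(\interior{G})^*=-D$ to pass from the weak formulation back to $aGu\in\dom(D)$ and $DaGu=mu$; uniqueness is a direct energy estimate. The paper instead reformulates the Dirichlet problem as a first-order system: it shows (Lemma~\ref{l:wp}) that a bounded coercive operator plus a skew-adjoint operator is invertible, applies this to the block operator $\bigl(\begin{smallmatrix} m & -D \\ -\interior{G} & a^{-1}\end{smallmatrix}\bigr)$ (Lemma~\ref{l:1storder}), and proves via Lemma~\ref{l:invdes} that your conditions on $u$ are equivalent to an explicit formula for $\bigl(\begin{smallmatrix} u-u_0 \\ q\end{smallmatrix}\bigr)$, $q=aGu$, in terms of this inverse; existence and uniqueness are then immediate. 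Your argument is more elementary and self-contained for this proposition alone (its weak formulation is exactly condition (ii) of Lemma~\ref{l:invdes}, so in effect you re-prove the solvability part of Lemma~\ref{l:1storder} by a second-order method), whereas the paper's block-matrix route simultaneously delivers the boundedness of the solution map into $\dom(G)\times\dom(D)$ and the explicit representation formulas for $\Lambda$ and $\Lambda^{-1}$ in Theorem~\ref{t:DtNwd}, which are the workhorses for the resolvent-convergence results of Section~\ref{Sbdgd4}; those extra conclusions do not fall out of the Lax--Milgram argument without further work.
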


For the proof of the proposition we need several auxiliary results.

\begin{lemma}\label{l:wp} 
Let $H$ be a Hilbert space, $M\in \cl(H)$ and $A$ a skew-adjoint operator in~$H$.
Let $\lambda > 0$ and assume that 
$\RRe (M x, x)_H \geq \lambda \|x\|_H^2$ for all $x \in H$.
Then the operator $M+A$ is invertible.
Moreover, the operator $(M+A)^{-1}$ is bounded from $H$ into $\dom(A)$
and $\|(M+A)^{-1}\|_{H \to \dom(A)} \leq \frac{1 + \lambda + \|M\|}{\lambda}$.
\end{lemma}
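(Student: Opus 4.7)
The plan is to use the coercivity of $M$ together with the skew-adjointness of $A$ to get simultaneously a lower bound for $\|(M+A)x\|$ and a triviality argument for the adjoint, and then read off the norm bound directly from the identity $Ax = y - Mx$.

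First I would observe that since $A$ is skew-adjoint, $\RRe(Ax,x)_H = 0$ for all $x \in \dom(A)$, so the coercivity hypothesis yields
\[
\RRe\bigl((M+A)x, x\bigr)_H \;\geq\; \lambda \|x\|_H^2 \qquad (x \in \dom(A)).
\]
A Cauchy--Schwarz estimate on the left-hand side gives $\|(M+A)x\|_H \geq \lambda \|x\|_H$, which at once establishes injectivity of $M+A$ and closedness of its range: indeed $M+A$ is a closed operator on $\dom(A)$ since $M$ is bounded and $A$ is closed, so the lower bound lifts Cauchy sequences in the range to Cauchy sequences in the domain.

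Next I would show surjectivity by density of the range. Because $M$ is everywhere defined and bounded, $(M+A)^* = M^* + A^* = M^* - A$ with $\dom((M+A)^*) = \dom(A)$. Since $M^*$ is coercive with the same constant $\lambda$ (as $\RRe M^* = \RRe M$), the same argument applied to $M^* - A$ (note $-A$ is also skew-adjoint) shows $\kar(M^* - A) = \{0\}$, so $\ran(M+A)$ is dense, hence equal to $H$. Therefore $M+A \from \dom(A) \to H$ is bijective.

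Finally, for the norm bound on $(M+A)^{-1}$, given $y \in H$ I set $x = (M+A)^{-1}y \in \dom(A)$. The lower bound from the first step gives $\|x\|_H \leq \tfrac{1}{\lambda}\|y\|_H$. From the identity $Ax = y - Mx$ one then obtains
\[
\|Ax\|_H \;\leq\; \|y\|_H + \|M\|\,\|x\|_H \;\leq\; \frac{\lambda + \|M\|}{\lambda}\,\|y\|_H,
\]
so (using the graph norm $\|x\|_{\dom(A)} = \|x\|_H + \|Ax\|_H$) one concludes $\|x\|_{\dom(A)} \leq \tfrac{1+\lambda+\|M\|}{\lambda}\|y\|_H$, as claimed.

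The main obstacle, such as it is, is the surjectivity step: one must be careful to identify $(M+A)^*$ correctly and to notice that $M^*$ inherits coercivity with the same constant so that the lower bound can be recycled verbatim for the adjoint. Everything else is a direct consequence of the two elementary inequalities.
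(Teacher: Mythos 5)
Your proof is correct and follows essentially the same route as the paper: coercivity plus skew-adjointness gives the lower bound $\|(M+A)x\|_H \geq \lambda\|x\|_H$, injectivity of $(M+A)^* = M^* - A$ yields surjectivity, and the graph-norm estimate comes from the identity $Ax = y - Mx$, i.e.\ $A(M+A)^{-1} = I - M(M+A)^{-1}$. The only difference is that you spell out the closed-range and adjoint-domain details that the paper leaves implicit.
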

\begin{proof}
If $x\in \dom(A)$, then
$\RRe((M+A)x,x)_H=\RRe((Mx,x)_H\geq \lambda \|x\|^2_H$.
Hence $M+A$ is one-to-one, its range is closed and 
$M+A$ is continuously invertible on its range.
Since $\RRe(Mx,x)_H = \RRe(M^*x,x)_H$ for all $x \in H$, 
we obtain similarly that $(M+A)^*=M^*-A$ is one-to-one.
Therefore $M+A$ is onto.
So $M + A$ is invertible and $\|(M+A)^{-1}\|_{H \to H} \leq \frac{1}{\lambda}$.

Since $A (M+A)^{-1} = I - M (M+A)^{-1}$, the operator $A (M+A)^{-1}$ 
is bounded from $H$ into $H$ and the estimate follows.
\end{proof}

Next we consider matrix operators.

\begin{lemma}\label{l:1storder}
Let $a\in \cl(H_1)$ and $m \in \cl(H_0)$ be coercive.
\begin{tabel}
\item \label{l:1storder-1}
The operators $\begin{pmatrix} m & -\interior{D} \\ -G & a^{-1}\end{pmatrix}$ 
and $\begin{pmatrix} m & -{D} \\ -\interior{G} & a^{-1}\end{pmatrix}$
in $H_0\times H_1$ are invertible.  
\item \label{l:1storder-2}
The operator $\begin{pmatrix} m & -\interior{D} \\ -G & a^{-1}\end{pmatrix}^{-1}$ 
is bounded from $H_0\times H_1$ into $\dom(G) \times \dom(\interior{D})$.
\item \label{l:1storder-3}
The operator $\begin{pmatrix} m & -{D} \\ -\interior{G} & a^{-1}\end{pmatrix}^{-1}$
is bounded from $H_0\times H_1$ into $\dom(\interior{G}) \times \dom(D)$.
\end{tabel}
\end{lemma}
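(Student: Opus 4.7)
The plan is to reduce both claims to Lemma~\ref{l:wp} applied in $H = H_0 \times H_1$. I would write each of the two matrix operators as a sum $M + A$, where
\[
M = \begin{pmatrix} m & 0 \\ 0 & a^{-1}\end{pmatrix}
\]
is bounded, and $A$ is the off-diagonal part with its natural domain. For the first operator I take $A_1 = \begin{pmatrix} 0 & -\interior{D} \\ -G & 0 \end{pmatrix}$ on $\dom(G) \times \dom(\interior{D})$, and for the second $A_2 = \begin{pmatrix} 0 & -D \\ -\interior{G} & 0 \end{pmatrix}$ on $\dom(\interior{G}) \times \dom(D)$.

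The first thing to verify is that $M$ is coercive on $H_0 \times H_1$. Coercivity of $m$ is given. For the $a^{-1}$-block, I would first note that coercivity of $a$ implies that $a$ is bijective on $H_1$ (applying Lemma~\ref{l:wp} to $a$ itself with $A = 0$ gives invertibility), and then substitute $x = a^{-1}y$ in $\RRe(ax,x)_{H_1} \geq \mu \|x\|^2$ to obtain
\[
\RRe(a^{-1}y, y)_{H_1} \geq \mu \|a^{-1}y\|^2 \geq \frac{\mu}{\|a\|^2}\, \|y\|^2,
\]
so $a^{-1}$ is coercive. Hence $M$ is coercive with constant $\min\bigl(\mu_m,\, \mu_a/\|a\|^2\bigr)$.

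The key point is to verify that $A_1$ and $A_2$ are skew-adjoint. For a block operator $\begin{pmatrix} 0 & B \\ C & 0 \end{pmatrix}$ with $B,C$ densely defined and closed the adjoint is $\begin{pmatrix} 0 & C^* \\ B^* & 0 \end{pmatrix}$. Using the standing identities $G^* = -\interior{D}$ and $D^* = -\interior{G}$ (together with $G^{**} = G$ and $D^{**} = D$ from closedness), I would compute
\[
A_1^* = \begin{pmatrix} 0 & -G^* \\ -\interior{D}^{\,*} & 0 \end{pmatrix} = \begin{pmatrix} 0 & \interior{D} \\ G & 0 \end{pmatrix} = -A_1,
\]
and analogously $A_2^* = -A_2$. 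I expect this adjoint bookkeeping to be the only mildly subtle step; everything else is an appeal to the abstract lemmas.

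With $M$ coercive and $A_i$ skew-adjoint, Lemma~\ref{l:wp} directly gives that $M + A_i$ is invertible and that $(M + A_i)^{-1}$ is bounded from $H_0 \times H_1$ into $\dom(A_i)$. Since $\dom(A_1) = \dom(G) \times \dom(\interior{D})$ and $\dom(A_2) = \dom(\interior{G}) \times \dom(D)$ with the graph norms, this is exactly \ref{l:1storder-1}, \ref{l:1storder-2} and \ref{l:1storder-3}.
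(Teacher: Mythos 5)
Your proposal is correct and follows essentially the same route as the paper: decompose each matrix as $M+A$ with the diagonal part $M=\bigl(\begin{smallmatrix} m & 0 \\ 0 & a^{-1}\end{smallmatrix}\bigr)$, note $\RRe a^{-1}\geq \|a\|^{-2}\RRe a$ so that $M$ is coercive, check skew-adjointness of the off-diagonal part from $-G^*=\interior{D}$ and $-(\interior{D})^*=G$ (respectively $-D^*=\interior{G}$ and $-(\interior{G})^*=D$), and apply Lemma~\ref{l:wp}. This is exactly the paper's argument, including the bookkeeping of the domains $\dom(G)\times\dom(\interior{D})$ and $\dom(\interior{G})\times\dom(D)$.
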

\begin{proof}
Let $H=H_0\times H_1$, $M=\begin{pmatrix} m & 0 \\ 0 & a^{-1}\end{pmatrix}$
and $A=\begin{pmatrix} 0 & -\interior{D} \\ -G & 0\end{pmatrix}$
with $\dom(A) = \dom(G) \times \dom(\interior{D})$.
Since $-\interior{D}^*=G$ and $- G^* = \interior{D}$, the operator $A$ is 
skew-adjoint.
Also $\RRe a^{-1} \geq \|a\|^{-2} \RRe a$, so $M$ is coercive.
Therefore $M+A$ is invertible and the operator $(M + A)^{-1}$
is bounded from $H$ into $\dom(A)$ by Lemma~\ref{l:wp}.
This proves the first part of Statement~\ref{l:1storder-1} and Statement~\ref{l:1storder-2}

The remaining parts of the lemma follow similarly.
\end{proof}

\begin{lemma} \label{l:invdes} 
Let $a\in \cl(H_1)$ and $m \in \cl(H_0)$ be coercive.
Let $u\in \dom(G)$, $q\in \dom(D)$, $u_0\in \BD(G)$ and $q_0\in \BD(D)$. 
\begin{tabel}
\item  \label{l:invdes-1} 
The following conditions are equivalent.
\begin{tabeleq}
\item  \label{l:invdes-1-1} 
$Dq=mu$, $q=aGu$ and $u-u_0\in \dom(\interior{G})$.
\item  \label{l:invdes-1-2} 
$q=aGu$, $u-u_0\in \dom(\interior{G})$  and
\[
    (aGu,\interior{G}v)_{H_1}=-(mu,v)_{H_0}
\]
for all $v\in \dom(\interior{G})$.
\item  \label{l:invdes-1-3} 
$\begin{pmatrix} u-u_0\\ q\end{pmatrix} 
= \begin{pmatrix} m & -D \\ -\interior{G} & a^{-1}\end{pmatrix}^{-1}
  \begin{pmatrix} -mu_0 \\ Gu_0\end{pmatrix}$.
\end{tabeleq}
\item  \label{l:invdes-2} 
The following conditions are equivalent.
\begin{tabeleq}
\item  \label{l:invdes-2-1} 
$Dq=mu$, $q=aGu$ and $q-q_0\in \dom(\interior{D})$.
\item  \label{l:invdes-2-2} 
$\begin{pmatrix} u\\ q-q_0\end{pmatrix}
= \begin{pmatrix} m & -\interior{D} \\ -G & a^{-1}\end{pmatrix}^{-1}
  \begin{pmatrix}
   Dq_0 \\ -a^{-1}q_0
  \end{pmatrix}$.
\end{tabeleq}
\end{tabel}
\end{lemma}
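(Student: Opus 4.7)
The plan is to unpack each of the listed statements as a system of strong equations and then identify it with the stated matrix equation. For Part~\ref{l:invdes-1}, the equivalence between \ref{l:invdes-1-1} and \ref{l:invdes-1-2} is essentially the definition of the adjoint. Since $D$ is closed and densely defined and $\interior{G} = -D^*$, it follows that $(\interior{G})^* = -D$ with $\dom((\interior{G})^*) = \dom(D)$. Hence, assuming $q = aGu$ and $u - u_0 \in \dom(\interior{G})$, the assertion ``$q \in \dom(D)$ and $Dq = mu$'' translates via the defining property of the adjoint into
\[
(aGu, \interior{G} v)_{H_1} = -(mu, v)_{H_0} \quad \text{for all } v \in \dom(\interior{G}),
\]
which is precisely the weak equation in \ref{l:invdes-1-2}. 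The converse direction is the same computation read backwards: the weak equation forces $aGu \in \dom((\interior{G})^*) = \dom(D)$ with $D(aGu) = mu$.

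For the equivalence between \ref{l:invdes-1-1} and \ref{l:invdes-1-3}, I would set $\tilde{u} = u - u_0$ and observe that $\begin{pmatrix} \tilde{u} \\ q \end{pmatrix}$ lies in the domain $\dom(\interior{G}) \times \dom(D)$ of the matrix operator exactly when $u - u_0 \in \dom(\interior{G})$ and $q \in \dom(D)$. Using $u_0 \in \BD(G) \subset \dom(G)$ and $\interior{G} \subset G$, one has $\interior{G}\tilde{u} + G u_0 = G u$ whenever this condition holds. Applying the matrix to $\begin{pmatrix} \tilde{u} \\ q \end{pmatrix}$ and comparing with $\begin{pmatrix} -mu_0 \\ Gu_0 \end{pmatrix}$ then produces the pair of equations $mu = Dq$ and $a^{-1}q = Gu$, i.e.\ $q = aGu$. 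Invertibility of the matrix (Lemma~\ref{l:1storder}\,\ref{l:1storder-3}) guarantees that solving the matrix equation is the same as solving the system, which is \ref{l:invdes-1-1}.

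Part~\ref{l:invdes-2} is handled by the symmetric argument with the roles of $G, \interior{G}$ and $D, \interior{D}$ swapped. Setting $\tilde{q} = q - q_0$ and using $q_0 \in \BD(D) \subset \dom(D)$ together with $\interior{D} \subset D$, one has $\interior{D}\tilde{q} + Dq_0 = Dq$ whenever $q - q_0 \in \dom(\interior{D})$. Applying the matrix $\begin{pmatrix} m & -\interior{D} \\ -G & a^{-1}\end{pmatrix}$ to $\begin{pmatrix} u \\ \tilde{q} \end{pmatrix} \in \dom(G) \times \dom(\interior{D})$ and comparing with $\begin{pmatrix} Dq_0 \\ -a^{-1}q_0 \end{pmatrix}$ recovers exactly $mu = Dq$ and $q = aGu$; invertibility of the matrix by Lemma~\ref{l:1storder}\,\ref{l:1storder-2} closes the equivalence. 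The whole argument is a bookkeeping exercise; the only point requiring care is keeping the distinction between the closed extensions $G, D$ and their minimal closures $\interior{G}, \interior{D}$ straight, so that the inclusions $\interior{G} \subset G$, $\interior{D} \subset D$ and the appropriate domains of the two matrix operators are respected. No substantive obstacle beyond this matching of domains arises.
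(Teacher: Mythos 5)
Your proposal is correct and follows essentially the same route as the paper: the equivalence of \ref{l:invdes-1-1} and \ref{l:invdes-1-2} via $D=-(\interior{G})^*$, and the equivalence with the matrix formulations by rewriting the system for $(u-u_0,q)$ (resp.\ $(u,q-q_0)$) and invoking the invertibility of the block operators from Lemma~\ref{l:1storder}. The only nitpick is a citation detail (invertibility itself is Lemma~\ref{l:1storder}\ref{l:1storder-1}; the later parts give the mapping properties of the inverses), which does not affect the argument.
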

\begin{proof}
`\ref{l:invdes-1}'. 
`\ref{l:invdes-1-1}$\Leftrightarrow$\ref{l:invdes-1-2}'.
This follows immediately from the equality $D = - (\interior{G})^*$.

`\ref{l:invdes-1-1}$\Leftrightarrow$\ref{l:invdes-1-3}'.
By a simple algebraic manipulation Condition~\ref{l:invdes-1-1} is equivalent to 
\[
u-u_0\in \dom(\interior{G})
\quad \mbox{and} \quad
\begin{pmatrix} m & -D \\ -G & a^{-1}\end{pmatrix} 
  \begin{pmatrix} u-u_0\\ q\end{pmatrix} 
= \begin{pmatrix} -mu_0 \\ Gu_0\end{pmatrix}
.  \]
By Lemma~\ref{l:1storder}\ref{l:1storder-1} this is equivalent to Condition~\ref{l:invdes-1-3}.

`\ref{l:invdes-2}'. 
The proof is similar.
\end{proof}

Now we are able to prove Proposition~\ref{pbdgd203}.

\begin{proof}[{\bf Proof of Proposition~\ref{pbdgd203}.}]
First we show existence.
Let $u\in \dom(G)$ and $q\in \dom(D)$ be such that
\[
\begin{pmatrix} u-u_0\\ q\end{pmatrix} 
= \begin{pmatrix} m & -D \\ -\interior{G} & a^{-1}\end{pmatrix}^{-1}
  \begin{pmatrix} -mu_0 \\ Gu_0 \end{pmatrix}.
\]
Then $u$ satisfies the desired properties by 
Lemma~\ref{l:invdes}\ref{l:invdes-1} \ref{l:invdes-1-3}$\Rightarrow$\ref{l:invdes-1-1}.

It remains to show uniqueness.
Let $\tilde u\in \dom(DaG)$ and suppose that
$m\tilde u -DaG\tilde u=0$ and $\tilde u-u_0\in \dom(\interior{G})$.
Set $\tilde q = aG\tilde u$.
Then it follows from 
Lemma~\ref{l:invdes}\ref{l:invdes-1} \ref{l:invdes-1-1}$\Rightarrow$\ref{l:invdes-1-3}
that 
\[
\begin{pmatrix} \tilde u-u_0\\ \tilde{q}\end{pmatrix} 
= \begin{pmatrix} m & -D \\ -\interior{G} & a^{-1}\end{pmatrix}^{-1}
  \begin{pmatrix} -mu_0 \\ Gu_0 \end{pmatrix},
\]
which implies that $u=\tilde u$.
\end{proof}

There is a similar version of Proposition~\ref{pbdgd203} for the Neumann problem.

\begin{prop} \label{pbdgd204}
Let $a\in \cl(H_1)$ and $m \in \cl(H_0)$ be coercive.
Let $q_0\in \BD(D)$.
Then there exists a unique $u\in \dom(DaG)$ such that
$mu -DaGu=0$ and $aGu - q_0\in \dom(\interior{D})$.
\end{prop}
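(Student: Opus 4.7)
The plan is to mirror the proof of Proposition~\ref{pbdgd203}, using the second part of Lemma~\ref{l:invdes} in place of the first. The key observation is that $q_0 \in \BD(D) \subset \dom(D)$, so the right-hand side vector $\begin{pmatrix} Dq_0 \\ -a^{-1}q_0 \end{pmatrix}$ lives in $H_0 \times H_1$, and by Lemma~\ref{l:1storder}\ref{l:1storder-1} the operator $\begin{pmatrix} m & -\interior{D} \\ -G & a^{-1}\end{pmatrix}$ in $H_0 \times H_1$ is invertible, so we may define
\[
\begin{pmatrix} u \\ q - q_0 \end{pmatrix}
= \begin{pmatrix} m & -\interior{D} \\ -G & a^{-1}\end{pmatrix}^{-1}
  \begin{pmatrix} Dq_0 \\ -a^{-1} q_0 \end{pmatrix}.
\]
By Lemma~\ref{l:1storder}\ref{l:1storder-2}, the pair $(u, q-q_0) \in \dom(G) \times \dom(\interior{D})$, so $u \in \dom(G)$ and $q := q_0 + (q-q_0) \in \dom(D)$ with $q - q_0 \in \dom(\interior{D})$.

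For existence, I would then invoke Lemma~\ref{l:invdes}\ref{l:invdes-2}, implication \ref{l:invdes-2-2}$\Rightarrow$\ref{l:invdes-2-1}, to conclude $Dq = mu$, $q = aGu$ and $q - q_0 \in \dom(\interior{D})$. From $q = aGu \in \dom(D)$ we see $u \in \dom(DaG)$, and then $DaGu = Dq = mu$ gives $mu - DaGu = 0$, while $aGu - q_0 = q - q_0 \in \dom(\interior{D})$ gives the Neumann boundary condition.

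For uniqueness, suppose $\tilde u \in \dom(DaG)$ satisfies $m\tilde u - DaG\tilde u = 0$ and $aG\tilde u - q_0 \in \dom(\interior{D})$. Set $\tilde q = aG\tilde u \in \dom(D)$. Then $D\tilde q = m\tilde u$, $\tilde q = aG\tilde u$, and $\tilde q - q_0 \in \dom(\interior{D})$, so the hypotheses of Lemma~\ref{l:invdes}\ref{l:invdes-2}\ref{l:invdes-2-1} hold. The implication \ref{l:invdes-2-1}$\Rightarrow$\ref{l:invdes-2-2} then forces
\[
\begin{pmatrix} \tilde u \\ \tilde q - q_0 \end{pmatrix}
= \begin{pmatrix} m & -\interior{D} \\ -G & a^{-1}\end{pmatrix}^{-1}
  \begin{pmatrix} Dq_0 \\ -a^{-1} q_0 \end{pmatrix}
= \begin{pmatrix} u \\ q - q_0 \end{pmatrix},
\]
whence $\tilde u = u$.

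The argument is essentially routine once one notices that Lemma~\ref{l:invdes} was designed to cover both Dirichlet and Neumann data symmetrically; there is no genuine obstacle. The only mild care needed is to verify that the data on the right-hand side of the matrix equation lies in $H_0 \times H_1$ (which is immediate from $q_0 \in \dom(D)$ and boundedness of $a^{-1}$) and that the appropriate part of Lemma~\ref{l:1storder}\ref{l:1storder-1} (the one featuring $\interior{D}$ and $G$ in the off-diagonal, not $D$ and $\interior{G}$) is the one to invoke.
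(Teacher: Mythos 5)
Your proof is correct and follows exactly the route the paper intends: the paper's own proof simply says to repeat the argument of Proposition~\ref{pbdgd203} with Lemma~\ref{l:invdes}\ref{l:invdes-2} in place of Lemma~\ref{l:invdes}\ref{l:invdes-1}, which is precisely what you carried out, including the correct choice of the matrix with $\interior{D}$ and $G$ and the use of Lemma~\ref{l:1storder}\ref{l:1storder-1} and \ref{l:1storder-2}.
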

\begin{proof}
This follows similarly to the proof of Proposition~\ref{pbdgd203}, 
but now use Lemma~\ref{l:invdes}\ref{l:invdes-2}
instead of Lemma~\ref{l:invdes}\ref{l:invdes-1}.
\end{proof}

At this stage we are able to define the Dirichlet-to-Neumann operator with 
variable coefficients as an operator 
acting from $\BD(G)$ (the abstract realization of $H^{1/2}(\Gamma)$) to 
$\BD(D)$ (the abstract realization of $H^{-1/2}(\Gamma)$).

\begin{definition} \label{dbdgd2-4.5}
Let $a\in \cl(H_1)$ and $m \in \cl(H_0)$ be coercive.
Define the operator
\[
   \DtN \colon \BD(G)\to \BD(D)
\]
as follows.
Let $u_0\in \BD(G)$.
By Proposition~\ref{pbdgd203} there exists a unique
$u\in \dom(DaG)$ such that 
$mu -DaGu=0$ and $u-u_0\in \dom(\interior{G})$.
Then we define $\Lambda u_0 = \pi_{\BD(D)}aGu$.
We call $\Lambda$ the {\bf Dirichlet-to-Neumann operator associated with $- DaG + m$}.
\end{definition}

So the graph of the operator $\Lambda$ is equal to 
\[
\{ (\pi_{\BD(G)} u, \pi_{\BD(D)} aGu) : 
      u \in \dom(DaG) \mbox{ and } mu -DaGu=0 \}
.  \]

\begin{theorem} \label{t:DtNwd} 
Let $a \in \cl(H_1)$ and $m \in \cl(H_0)$ be coercive.
Then the operator $\DtN$ associated with $- DaG + m$ is bounded and invertible.
Moreover, 
\[
   \DtN u_0 
= \begin{pmatrix} 0 & \pi_{\BD(D)}\end{pmatrix} 
  \begin{pmatrix} m & -D \\ -\interior{G} & a^{-1}\end{pmatrix}^{-1}
  \begin{pmatrix}-m\\ G\end{pmatrix} u_0
\] 
for all $u_0\in \BD(G)$ and
\[
   \DtN^{-1} q_0 
= \begin{pmatrix} \pi_{\BD(G)} & 0 \end{pmatrix} 
  \begin{pmatrix} m & -\interior{D} \\ -G & a^{-1}\end{pmatrix}^{-1}
  \begin{pmatrix} D\\ -a^{-1}\end{pmatrix} q_0
\] 
for all $q_0\in \BD(D)$.
\end{theorem}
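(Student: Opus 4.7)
The plan is to extract both formulas as direct rewritings of the equivalent characterizations in Lemma~\ref{l:invdes}, and then to establish bijectivity by noting that the Dirichlet and Neumann problems yield the same solution $u$. First I would take $u_0\in\BD(G)$, let $u\in\dom(DaG)$ be the unique Dirichlet solution supplied by Proposition~\ref{pbdgd203}, and set $q=aGu\in\dom(D)$. The implication \ref{l:invdes-1-1}$\Rightarrow$\ref{l:invdes-1-3} in Lemma~\ref{l:invdes}\ref{l:invdes-1} then reads
\[
\begin{pmatrix} u-u_0\\ q \end{pmatrix}
=\begin{pmatrix} m & -D\\ -\interior{G} & a^{-1}\end{pmatrix}^{-1}\begin{pmatrix}-m\\ G\end{pmatrix}u_0,
\]
and applying $\pi_{\BD(D)}$ to the second component produces $\DtN u_0=\pi_{\BD(D)}q$, which is the claimed expression. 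Boundedness of $\DtN$ then follows by composing bounded maps: $u_0\mapsto\begin{pmatrix}-mu_0\\ Gu_0\end{pmatrix}$ is continuous from $\BD(G)\subset\dom(G)$ into $H_0\times H_1$; Lemma~\ref{l:1storder}\ref{l:1storder-3} gives continuity of the inverse matrix from $H_0\times H_1$ into $\dom(\interior{G})\times\dom(D)$; and $\pi_{\BD(D)}\colon\dom(D)\to\BD(D)$ is a bounded orthogonal projection.

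For the inverse formula I would argue symmetrically on the Neumann side. Given $q_0\in\BD(D)$, Proposition~\ref{pbdgd204} furnishes a unique $u\in\dom(DaG)$ with $mu-DaGu=0$ and $aGu-q_0\in\dom(\interior{D})$; Lemma~\ref{l:invdes}\ref{l:invdes-2} represents this as
\[
\begin{pmatrix} u\\ aGu-q_0 \end{pmatrix}
=\begin{pmatrix} m & -\interior{D}\\ -G & a^{-1}\end{pmatrix}^{-1}\begin{pmatrix} D\\ -a^{-1}\end{pmatrix}q_0.
\]
Define the candidate inverse $Sq_0:=\pi_{\BD(G)}u$, i.e.\ the first component projected; boundedness of $S$ follows just as for $\DtN$, now using Lemma~\ref{l:1storder}\ref{l:1storder-2} and the bounded projection $\pi_{\BD(G)}\colon\dom(G)\to\BD(G)$.

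It remains to verify $S=\DtN^{-1}$, and this is where the duality between the Dirichlet and Neumann problems does the work. For $u_0\in\BD(G)$, let $u$ be the Dirichlet solution. Since $\BD(D)=\dom(\interior{D})^{\bot_{\dom(D)}}$ and $aGu\in\dom(D)$, we have $aGu-\pi_{\BD(D)}aGu\in\dom(\interior{D})$, so $u$ also solves the Neumann problem with datum $q_0:=\DtN u_0$. Uniqueness in Proposition~\ref{pbdgd204}, together with $\pi_{\BD(G)}u=u_0$ (because $u-u_0\in\dom(\interior{G})$ and $u_0\in\BD(G)$), then yields $S(\DtN u_0)=u_0$; the opposite composition $\DtN(Sq_0)=q_0$ is checked in exactly the same way by starting from $q_0\in\BD(D)$. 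The only real pitfall is the bookkeeping of which components of the first-order system lie in $\dom(G)$ versus $\dom(D)$ before projecting; no further analytic ingredient is needed, as existence, uniqueness and the matrix invertibility have already been established in Propositions~\ref{pbdgd203}, \ref{pbdgd204} and Lemma~\ref{l:1storder}.
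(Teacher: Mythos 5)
Your proposal is correct and follows essentially the same route as the paper: the formula for $\DtN$ via Lemma~\ref{l:invdes}\ref{l:invdes-1} and the argument of Proposition~\ref{pbdgd203}, boundedness via Lemma~\ref{l:1storder}\ref{l:1storder-3}, and the inverse via Lemma~\ref{l:invdes}\ref{l:invdes-2}, Proposition~\ref{pbdgd204} and Lemma~\ref{l:1storder}\ref{l:1storder-2}. Your explicit verification that the Neumann-solution map is a two-sided inverse (using the orthogonal decompositions $\dom(G)=\BD(G)\oplus\dom(\interior{G})$ and $\dom(D)=\BD(D)\oplus\dom(\interior{D})$ together with uniqueness in both Propositions) is exactly the detail the paper compresses into ``similar''.
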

\begin{proof} 
The expression for $\Lambda$ follows from Lemma~\ref{l:invdes}\ref{l:invdes-1},
arguing as in the proof of Proposition~\ref{pbdgd203}.
The boundedness of $\Lambda$ is then a consequence of
Lemma~\ref{l:1storder}\ref{l:1storder-3}.

The proof for $\Lambda^{-1}$ is similar, using Lemma~\ref{l:invdes}\ref{l:invdes-2},
Proposition~\ref{pbdgd204} and Lemma~\ref{l:1storder}\ref{l:1storder-2}.
\end{proof}

\section{An intermediate operator and m-sectoriality} \label{Sbdgd3}

In Proposition~\ref{pbdgd202} we showed that the space $\BD(D)$ is naturally isomorphic to 
$\BD(G)'$.
In this section we assume that there is a Hilbert space $H$ such that 
$\BD(G) \hookrightarrow H \hookrightarrow \BD(G)'$ is a Gelfand triple.
Then we study the part of the Dirichlet-to-Neumann operator in~$H$.
In the model example, Example~\ref{ex:classical}, one can take $H = L_2(\Gamma)$.

Throughout this section, we adopt the notation and assumptions 
as in the beginning of Section~\ref{Sbdgd2}.
In addition, let $H$ be a Hilbert space and $\kappa \in \cl(\BD(G),H)$.
We assume that $\kappa$ is one-to-one and has dense range.

\begin{example} \label{xbdgd205}
Let $\Omega$ be a bounded Lipschitz domain with boundary $\Gamma$.
Let $G$ and $D$ be as in Example~\ref{ex:classical}.
Let $\sigma \in (-\infty,\frac{1}{2}]$ and choose $H = H^\sigma(\Gamma)$.
Define $\kappa \colon \BD(G) \to H$ by $\kappa(u) = \tr u$.
Then $\kappa$ is one-to-one and has dense range.
Note that $\kappa$ is compact if and only if $\sigma < \frac{1}{2}$.

Now suppose that $\sigma = 0$, so $H = L_2(\Gamma)$.
Let $\psi \in L_2(\Gamma)$ and set $u = \kappa^* \psi$.
Then $u \in \BD(G)$, so $u \in H^1(\Omega)$ and 
$\Delta u = u$ weakly on $\Omega$ by Example~\ref{xbdgd210}.
If $v \in \BD(G)$, then 
\begin{eqnarray*}
\int_\Gamma \psi \overline{\Tr v}
& = & (\psi, \kappa(v))_{L_2(\Gamma)}
= (\kappa^* \psi, v)_{\BD(G)}
= (u, v)_{\BD(G)}  \\
& = & \int_\Omega u \overline v + \int_\Omega \nabla u \cdot \overline{\nabla v}
= \int_\Omega (\Delta u) \overline v + \int_\Omega \nabla u \cdot \overline{\nabla v}
.
\end{eqnarray*}
Alternatively, if $v \in H^1_0(\Omega) = \dom(\interior{G})$, then 
\[
\int_\Gamma \psi \overline{\Tr v} 
= 0 = \int_\Omega (\Delta u) \overline v + \int_\Omega \nabla u \cdot \overline{\nabla v}
.  \]
So by linearity
\[
\int_\Gamma \psi \overline{\Tr v} 
= \int_\Omega (\Delta u) \overline v + \int_\Omega \nabla u \cdot \overline{\nabla v}
\]
for all $v \in H^1(\Omega)$.
Hence $u$ has a weak normal derivative and $\partial_\nu u = \psi$.
\end{example}

We consider the Gelfand triple 
\[
\BD(G) 
\stackrel{\kappa}{\hookrightarrow}
H
\simeq
H'
\stackrel{\kappa'}{\hookrightarrow}
\BD(G)'
\]
with $H$ as pivot space.
Recall that $\BD(G)'$ is naturally isomorphic to $\BD(D)$ by Proposition~\ref{pbdgd202}.
We aim to describe the part of the Dirichlet-to-Neumann operator $\Lambda$ 
in~$H$.
We describe the image of $H$ in $\BD(D)$ under the above maps
$H \simeq H' \stackrel{\kappa'}{\hookrightarrow} \BD(G)' \simeq \BD(D)$.

\begin{lemma} \label{lbdgd206}
Let $\Phi \colon \BD(D) \to \BD(G)'$ be as in Proposition~\ref{pbdgd202}.
Define $F \colon H \to H'$ by $(F \varphi)(\psi) = (\varphi,\psi)_H$.
Then $\Phi^{-1} \circ \kappa' \circ F = G \circ \kappa^*$.
\end{lemma}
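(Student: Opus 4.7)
The plan is to verify the equality by applying $\Phi$ to both sides, so the task reduces to showing $\kappa' \circ F = \Phi \circ G \circ \kappa^*$ as maps $H \to \BD(G)'$. Since two elements of $\BD(G)'$ coincide iff they agree on every $u \in \BD(G)$, I will fix $\varphi \in H$ and $u \in \BD(G)$ and compute both sides.

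For the left-hand side, unravelling the definitions of $\kappa'$ (the Banach adjoint), $F$ (the Riesz map) and $\kappa^*$ (the Hilbert space adjoint of $\kappa \in \cl(\BD(G),H)$) gives
\[
\bigl(\kappa'(F(\varphi))\bigr)(u)
= \bigl(F(\varphi)\bigr)(\kappa(u))
= (\varphi, \kappa(u))_H
= (\kappa^*(\varphi), u)_{\BD(G)}.
\]
For the right-hand side, I would note that $\kappa^*(\varphi) \in \BD(G)$, so $G$ acts on it as the restriction $\bull{G}$, mapping into $\BD(D)$ by Corollary~\ref{cbdgd201}. Then the computation already carried out in (\ref{etbdgd202;1}) during the proof of Proposition~\ref{pbdgd202} yields
\[
\bigl(\Phi(G(\kappa^*(\varphi)))\bigr)(u)
= (\bull{G}(\kappa^*(\varphi)), \bull{G} u)_{\BD(D)}.
\]
Finally, invoking the unitarity of $\bull{G}$ from Lemma~\ref{l:Gdot} reduces this to $(\kappa^*(\varphi), u)_{\BD(G)}$, which matches the left-hand side.

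There is no real obstacle here; the argument is essentially a bookkeeping exercise once one recognises that the content already lies in (\ref{etbdgd202;1}) together with the unitarity of $\bull{G}$. The only point where care is needed is to interpret the symbol $G$ on the right-hand side as the restriction $\bull{G} \colon \BD(G) \to \BD(D)$, rather than as the original unbounded operator in $H_0$.
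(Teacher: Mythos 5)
Your proof is correct and is essentially the paper's argument in mirror image: the paper sets $q = (\Phi^{-1}\circ\kappa'\circ F)(\varphi)$ and shows $\bull{D}q=\kappa^*\varphi$ by testing against $u\in\BD(G)$, while you apply $\Phi$ to the right-hand side and test against $u$, but both rest on exactly the same ingredients, namely the identity (\ref{etbdgd202;1}), the unitarity of $\bull{G}$ and $\bull{D}$ from Lemma~\ref{l:Gdot}, and the definitions of $\kappa'$, $F$ and $\kappa^*$. Your closing remark about reading $G$ on $\BD(G)$ as $\bull{G}$ (via Corollary~\ref{cbdgd201}) is the right point of care and matches the paper's usage.
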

\begin{proof}
Let $\varphi \in H$ and write $q = (\Phi^{-1} \circ \kappa' \circ F)(\varphi)$.
Let $u \in \BD(G)$.
Then it follows from Lemma~\ref{l:Gdot} and (\ref{etbdgd202;1}) that 
\begin{eqnarray*}
(\bull{D} q, u)_{\BD(G)}
& = & (q,\bull{G} u)_{\BD(D)}  \\
& = & (\Phi(q))(u)
= ((\kappa' \circ F) \varphi)(u)
= (\varphi, \kappa(u))_H
= (\kappa^* \varphi, u)_{\BD(G)}
.
\end{eqnarray*}
So $\bull{D} q = \kappa^* \varphi$ and $q = \bull{G} \bull{D} q = \bull{G} \kappa^* \varphi$.
\end{proof}

Now we are able to define the part of the Dirichlet-to-Neumann operator in~$H$.

\begin{definition} \label{dbdgd301}
Let $a \in \cl(H_1)$ and $m \in \cl(H_0)$ be coercive.
Define the operator $\Lambda_H$ in $H$ as follows. 
Let $\varphi,\psi \in H$.
Then we say that $\varphi \in \dom(\Lambda_H)$ and $\Lambda_H \varphi = \psi$
if there exists a $u_0 \in \BD(G)$ such that $\kappa(u_0) = \varphi$
and $\Lambda u_0 = (G \circ \kappa^*)(\psi)$, where 
$\Lambda$ is the Dirichlet-to-Neumann operator associated with $- DaG + m$.
We call $\Lambda_H$ the 
{\bf Dirichlet-to-Neumann operator in~$H$ associated with $- DaG + m$}.
\end{definition}

Despite the abundance of choice of the space $H$, see Example~\ref{xbdgd205}, 
the operator $-\Lambda_H$ is always a semigroup generator.

\begin{thm} \label{tbdgd302}
Let $a \in \cl(H_1)$ and $m \in \cl(H_0)$ be coercive.
Then the Dirichlet-to-Neumann operator $\Lambda_H$ associated with $- DaG + m$
is m-sectorial.
In particular, if both $a$ and $m$ are symmetric, then 
$\Lambda_H$ is self-adjoint.
\end{thm}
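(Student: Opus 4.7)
The plan is to realise $\Lambda_H$ as the operator associated with a $j$-elliptic sesquilinear form on $\BD(G)$ in the sense of \cite{AE2} (with $j = \kappa$), and then invoke the form representation theorem, which simultaneously yields m-sectoriality and, in the symmetric case, self-adjointness.

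First I would introduce the Dirichlet solution operator $S \colon \BD(G) \to \dom(DaG)$ sending $u_0$ to the unique $u$ furnished by Proposition~\ref{pbdgd203}; linearity follows from uniqueness, and the explicit formula in Lemma~\ref{l:invdes}\ref{l:invdes-1} combined with Lemma~\ref{l:1storder}\ref{l:1storder-3} shows that $S \colon \BD(G) \to \dom(G)$ is bounded. I then define
\[
\sigma(u_0,v_0) = (a G S u_0,\, G S v_0)_{H_1} + (m S u_0,\, S v_0)_{H_0}
\]
on $\BD(G)$. Continuity is immediate. For ellipticity, writing $u = Su_0$, coercivity of $a$ and $m$ gives $\RRe \sigma(u_0,u_0) \ge c\,\|u\|_{\dom(G)}^2$. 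Since $u - u_0 \in \dom(\interior{G})$ and $u_0 \in \BD(G) = \dom(\interior{G})^{\bot_{\dom(G)}}$, the element $u_0$ is the orthogonal projection of $u$ onto $\BD(G)$ inside $\dom(G)$, so $\|u_0\|_{\BD(G)} \le \|u\|_{\dom(G)}$ and $\sigma$ is coercive on $\BD(G)$ (no shift by $\|\kappa u_0\|_H^2$ is needed). The form representation theorem then produces an m-sectorial operator $B$ on $H$ characterised by: $\varphi = \kappa u_0 \in \dom(B)$ and $B\varphi = \psi$ iff $\sigma(u_0,v_0) = (\psi, \kappa v_0)_H$ for all $v_0 \in \BD(G)$.

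The central step is the identification $B = \Lambda_H$. Given $u = Su_0$ and $v = Sv_0$, I decompose $v = v_0 + (v - v_0)$ with $v - v_0 \in \dom(\interior{G})$, substitute $mu = D(aGu)$, and use the adjoint identity $(aGu, \interior{G} w)_{H_1} = -(D(aGu), w)_{H_0}$ for $w \in \dom(\interior{G})$, which follows from $D = -(\interior{G})^*$. The terms involving $v - v_0$ cancel and I obtain
\[
\sigma(u_0,v_0) = (aGu,\, Gv_0)_{H_1} + (D(aGu),\, v_0)_{H_0}.
\]
The same adjoint identity shows that the right-hand side is unchanged if $aGu$ is replaced by $\pi_{\BD(D)} aGu = \Lambda u_0$, so by the formula of Proposition~\ref{pbdgd202} the expression equals $\Phi(\Lambda u_0)(v_0)$. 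Lemma~\ref{lbdgd206} rewrites the target $(\psi, \kappa v_0)_H$ as $\Phi(G \kappa^* \psi)(v_0)$, and injectivity of $\Phi$ yields the equivalence matching Definition~\ref{dbdgd301} verbatim; hence $B = \Lambda_H$. The self-adjointness assertion then follows by observing that symmetry of $a$ and $m$ renders $\sigma$ symmetric, and a symmetric m-sectorial operator is self-adjoint. The main obstacle is precisely the cancellation identifying $\sigma(u_0,v_0)$ with $\Phi(\Lambda u_0)(v_0)$, where the formal skew-adjointness of the $(D,G)$ pair must be deployed twice — once to cancel the $v - v_0$ contribution and once to pass from $aGu$ to its $\BD(D)$-projection.
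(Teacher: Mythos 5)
Your proposal is correct, but it takes a different route from the paper. The paper applies the representation theorem of \cite{AE2} (Theorem~\ref{t:gen}) directly to the bulk energy form $\gotb(u,v)=(aGu,Gv)_{H_1}+(mu,v)_{H_0}$ on all of $\dom(G)$, with the \emph{non-injective} map $j=\kappa\circ\pi_{\BD(G)}$; the whole point of using the $j$-elliptic framework there is that no harmonic lifting is needed, and the identification with $\Lambda_H$ (Proposition~\ref{pbdgd303}) is done by a two-sided inclusion, testing separately against $v\in\dom(\interior{G})$ and $v\in\BD(G)$ and using Lemma~\ref{l:Gdot}. You instead build the Dirichlet solution operator $S$ (well defined by Proposition~\ref{pbdgd203}, bounded by Lemma~\ref{l:invdes}\ref{l:invdes-1} and Lemma~\ref{l:1storder}\ref{l:1storder-3}), pull the form back to the trace space, $\sigma(u_0,v_0)=\gotb(Su_0,Sv_0)$ on $\BD(G)$, and use the injective $j=\kappa$, so you are essentially in the classical form setting; coercivity without a shift is justified correctly, since $Su_0-u_0\in\dom(\interior{G})$ and $\BD(G)=\dom(\interior{G})^{\bot_{\dom(G)}}$ give $\|u_0\|_{\BD(G)}\le\|Su_0\|_{\dom(G)}$. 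Your identification is also sound: the cancellation uses $D=-(\interior{G})^*$, the passage from $aGu$ to $\pi_{\BD(D)}aGu$ uses $\interior{D}=-G^*$ (a different, though symmetric, half of the skew-adjointness — your closing remark acknowledges this), and then Proposition~\ref{pbdgd202}, Lemma~\ref{lbdgd206} and unitarity of $\Phi$ convert the form characterisation into Definition~\ref{dbdgd301} exactly. What each approach buys: the paper's version is shorter on prerequisites (no solution operator inside the argument, no explicit use of $\Phi$) but leans on the full non-injective-$j$ generality of \cite{AE2}; yours requires the extra boundedness of $S$ and the algebraic identity $\sigma(u_0,\cdot)=\Phi(\Lambda u_0)(\cdot)$, but in return exhibits $\Lambda_H$ as associated with a coercive form on the abstract trace space $\BD(G)$, the exact analogue of the classical Dirichlet-to-Neumann form on $H^{1/2}(\Gamma)$, which is conceptually appealing and makes the injective case of the representation theorem suffice.
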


The proof of this theorem is based on form methods and the next theorem.

\begin{theorem} \label{t:gen}
Let $\widetilde H,V$ be Hilbert spaces and let $j \in \cl(V,\widetilde H)$ with dense range.
Let $\gotb \colon V\times V\to \C$ be a continuous coercive sesquilinear form,
that is there exists a $\mu > 0$ such that 
$\RRe \gotb(v)\geq \mu \|v\|_V^2$ for all $v\in V$. 
Define the operator $A$ in $\widetilde H$ as follows.
Let $x,f \in \widetilde H$. 
Then $x \in \dom(A)$ and $Ax = f$ if there exists a $u \in V$ such that 
$j(u) = x$ and 
$\gotb(u,v) = (f,j(v))_{\widetilde H}$ for all $v \in V$.
Then $A$ is well-defined and m-sectorial.
If, in addition, $\gotb$ is symmetric, then $A$ is self-adjoint.
\end{theorem}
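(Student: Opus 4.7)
The plan is to mimic the classical form representation theorem, with the Lax-Milgram lemma as the main analytic input, adapted to the setting where $j$ is allowed to be non-injective; this is in essence the $j$-elliptic form framework of \cite{AE2}.

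First I would set up the solution map. For each $f\in\widetilde H$, continuity of $j$ makes $\ell_f\colon v\mapsto (f,j(v))_{\widetilde H}$ a continuous antilinear functional on $V$, and continuity and coercivity of $\gotb$ together with Lax-Milgram yield a unique $u\in V$ with $\gotb(u,v)=\ell_f(v)$ for all $v\in V$. Setting $x:=j(u)$ gives $x\in\dom(A)$ and $Ax=f$, so $A$ is surjective. For well-definedness of $A$ as a single-valued operator, if $u_1,u_2\in V$ have $j(u_1)=j(u_2)=x$ with outputs $f_1,f_2$, then $w:=u_1-u_2\in\kar(j)$ satisfies $\gotb(w,v)=(f_1-f_2,j(v))_{\widetilde H}$; testing at $v=w$ gives $\gotb(w,w)=0$, whence $w=0$ by coercivity and $f_1=f_2$ by density of $\ran(j)$. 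Linearity of $A$ is immediate. For sectoriality, any $x=j(u)\in\dom(A)$ satisfies $(Ax,x)_{\widetilde H}=\gotb(u,u)$, and the bounds $\RRe\gotb(u,u)\ge\mu\|u\|_V^2$ and $|\gotb(u,u)|\le C\|u\|_V^2$ place the numerical range of $A$ in a proper sector $\Sigma_\theta$ with $\theta<\pi/2$.

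For the resolvent estimates and closedness, the shifted form $\gotb_\lambda(u,v):=\gotb(u,v)+\lambda(j(u),j(v))_{\widetilde H}$ is continuous and coercive on $V$ whenever $\RRe\lambda\ge 0$; rerunning the first step produces an everywhere-defined right inverse $R_\lambda$ of $A+\lambda I$, and the test $v=u$ gives $\|R_\lambda\|_{\widetilde H\to\widetilde H}\le 1/\RRe\lambda$ for $\RRe\lambda>0$. This shows that $A$ is closed and that the open right half-plane lies in $\rho(-A)$. For density of $\dom(A)=\ran(R_\lambda)$, I would identify $R_\lambda^*$ as the analogous operator built from the adjoint form $\gotb^*(u,v):=\overline{\gotb(v,u)}$ via a short computation pairing the two defining form equations; the same coercivity argument on $\kar(j)$ then forces $\kar(R_\lambda^*)=\{0\}$, and hence $\ran(R_\lambda)$ is dense. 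Combining sectoriality, closedness, dense domain and the resolvent estimate yields m-sectoriality. If $\gotb$ is symmetric, then $(Ax,y)_{\widetilde H}=\gotb(u,v)=\overline{\gotb(v,u)}=(x,Ay)_{\widetilde H}$ for $x=j(u),y=j(v)\in\dom(A)$, so $A$ is symmetric; surjectivity of $A+I$ then upgrades $A$ to self-adjoint.

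The main obstacle, in my view, is precisely the fact that $j$ is permitted not to be injective: well-definedness of $A$, triviality of $\kar(R_\lambda^*)$ and even the sensible identification of $R_\lambda^*$ all reduce to evaluating $\gotb$ on $\kar(j)$, and only the coercivity of $\gotb$ on all of $V$---the $j$-ellipticity condition---rescues these arguments from the classical proof where $V\hookrightarrow\widetilde H$ is assumed injective.
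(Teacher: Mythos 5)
Your proof is correct. The paper itself gives no argument for Theorem~\ref{t:gen} beyond citing \cite[Theorem 2.1]{AE2}, and your proposal is essentially the standard proof of that cited representation theorem, specialised to forms coercive on all of $V$: Lax--Milgram for the shifted forms $\gotb_\lambda$, the identity $(Ax,x)_{\widetilde H}=\gotb(u,u)$ for the numerical-range (sectoriality) estimate, and the identification of $R_\lambda^*$ with the solution operator of the adjoint form $\gotb^*$ to obtain density of $\dom(A)$, with the coercivity of $\gotb$ on $\kar(j)$ doing the work that injectivity of the embedding does classically. The only step you compress is the injectivity of $A+\lambda I$, which is needed to pass from ``bounded everywhere-defined right inverse $R_\lambda$'' to ``$\lambda\in\rho(-A)$ and $A$ closed''; it follows at once from the accretivity estimate $\RRe((A+\lambda I)x,x)_{\widetilde H}\geq \RRe\lambda\,\|x\|_{\widetilde H}^2$ that your sectoriality step already supplies, but it should be said explicitly.
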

\begin{proof}
See \cite[Theorem 2.1]{AE2}.
\end{proof}

In the situation of Theorem~\ref{t:gen} we call $A$ the 
{\bf operator associated with} $(\gotb,j)$.

\medskip

Theorem~\ref{tbdgd302} is an immediate consequence of 
Theorem~\ref{t:gen} and the next proposition.

\begin{prop} \label{pbdgd303}
Let $a \in \cl(H_1)$ and $m \in \cl(H_0)$ be coercive.
Define the sesquilinear form $\gotb \colon \dom(G) \times \dom(G) \to \Ci$ by
\[
\gotb(u,v)
= (aGu,Gv)_{H_1} + (mu,v)_{H_0}
.  \]
Then $\gotb$ is coercive and continuous.
Further define $j \colon \dom(G) \to H$ by 
$j = \kappa \circ \pi_{\BD(G)}$.
Then the Dirichlet-to-Neumann operator $\Lambda_H$ associated with $- DaG + m$
is equal to the operator associated with $(\gotb,j)$.
\end{prop}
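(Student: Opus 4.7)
The plan is as follows. First, dispose of the easy assertions: continuity of $\gotb$ is bounded above by $(\|a\|+\|m\|)\|u\|_{\dom(G)}\|v\|_{\dom(G)}$, and coercivity of $\gotb$ follows immediately from coercivity of $a$ and $m$ with constant $\min(\mu_a,\mu_m)$. Also $j = \kappa\circ\pi_{\BD(G)}$ has dense range because $\pi_{\BD(G)}$ is onto $\BD(G)$ and $\kappa$ has dense range. Hence Theorem~\ref{t:gen} applies and produces an m-sectorial operator $A$ associated with $(\gotb,j)$, which is what we need to identify with $\Lambda_H$.

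The heart of the proof is a single algebraic identity. I would show that for \emph{every} $u\in\dom(DaG)$ with $mu-DaGu=0$ and every $v\in\dom(G)$, writing $v = v_0+v_1$ with $v_0=\pi_{\BD(G)}v$ and $v_1\in\dom(\interior{G})$, and writing $aGu = q+q'$ with $q = \pi_{\BD(D)}aGu$ and $q'\in\dom(\interior{D})$, one has
\[
\gotb(u,v) = (q,Gv_0)_{H_1} + (Dq,v_0)_{H_0} = \bigl(\Phi(\pi_{\BD(D)}aGu)\bigr)(v_0),
\]
where $\Phi$ is the map from Proposition~\ref{pbdgd202}. To get this, I would split the form along the two decompositions and see the $v_1$-contribution vanish because $aGu\in\dom(D)$ with $D(aGu)=mu$ and $\interior{G}v_1 = Gv_1$ with $D = -(\interior{G})^*$; the cross-term $(q',Gv_0)_{H_1}$ cancels against $(\interior{D}q',v_0)_{H_0}$ via $G^*=-\interior{D}$ applied to $q'\in\dom(\interior{D})$; the remaining pieces assemble into the displayed right-hand side using $(\Phi(q))(v_0)=(Dq,v_0)_{H_0}+(q,Gv_0)_{H_1}$.

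Now the forward inclusion $\Lambda_H\subset A$ is immediate: given $\Lambda_H\varphi = \psi$ with witness $u\in\dom(DaG)$, we have $j(u) = \kappa(\pi_{\BD(G)}u)=\kappa(u_0)=\varphi$ since $u-u_0\in\dom(\interior{G})$, and by the identity above together with Lemma~\ref{lbdgd206} (which says $\Phi(G\kappa^*\psi) = (\kappa'\circ F)\psi$) we obtain $\gotb(u,v) = (\psi,\kappa(v_0))_H = (\psi,j(v))_H$ for all $v$. For the reverse inclusion $A\subset\Lambda_H$, I take $u\in\dom(G)$ with $j(u)=\varphi$ and $\gotb(u,v)=(\psi,j(v))_H$ for all $v\in\dom(G)$. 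Testing against $v\in\dom(\interior{G})$ makes $j(v)=0$, hence $(aGu,\interior{G}v)_{H_1} = -(mu,v)_{H_0}$, which by the equivalence \ref{l:invdes-1-2}$\Rightarrow$\ref{l:invdes-1-1} in Lemma~\ref{l:invdes} forces $aGu\in\dom(D)$ and $DaGu = mu$. Then the identity of the previous paragraph applies, and testing against arbitrary $v\in\dom(G)$ yields $\Phi(\pi_{\BD(D)}aGu)=\Phi(G\kappa^*\psi)$ as elements of $\BD(G)'$; since $\Phi$ is bijective, $\pi_{\BD(D)}aGu = G\kappa^*\psi$, i.e., $\Lambda u_0 = (G\circ\kappa^*)(\psi)$, giving $\Lambda_H\varphi = \psi$.

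The main obstacle is bookkeeping the cancellations in the key identity, which rests on correctly exploiting the four relations $\interior{G}=-D^*$, $\interior{D}=-G^*$, $\BD(G)\perp\dom(\interior{G})$ in $\dom(G)$, and $\BD(D)\perp\dom(\interior{D})$ in $\dom(D)$; the risk is misapplying a domain condition (e.g.\ using $Dq'=\interior{D}q'$ without noting $\interior{D}\subset D$) or dropping a sign. Everything else is either a direct appeal to earlier results or the universal property of the associated operator from Theorem~\ref{t:gen}.
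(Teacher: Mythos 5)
Your proof is correct and takes essentially the same route as the paper's: both decompose along $\dom(G)=\BD(G)\oplus\dom(\interior{G})$ (and, explicitly in your case, $aGu$ along $\dom(D)=\BD(D)\oplus\dom(\interior{D})$, which the paper handles via the $\dom(D)$-inner product and the projection), recover $aGu\in\dom(D)$ with $DaGu=mu$ by testing against $\dom(\interior{G})$, and then identify the boundary contributions. Packaging the boundary computation as a single identity through $\Phi$ and Lemma~\ref{lbdgd206} is only a cosmetic variant of the paper's direct calculation with $\kappa^*$ and Lemma~\ref{l:Gdot}.
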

\begin{proof}
The form $\gotb$ is coercive since both $a$ and $m$ are coercive.
Obviously $\gotb$ is continuous.
Let $A$ be the operator associated with $(\gotb,j)$.
It remains to prove that $A=\Lambda_H$.

`$\Lambda_H\subset A$'.
Let $\varphi\in \dom(\Lambda_H)$ and set $\psi=\Lambda_H\varphi$.
Then there exists a $u_0\in \BD(G)$ with $\kappa(u_0)=\varphi$ and $\DtN u_0 = (G \circ \kappa^*) \psi$.
By definition there exists a $u \in \dom(D a G)$ such that 
$mu - DaGu = 0$, $u - u_0 \in \dom(\interior{G})$ and 
$\Lambda u_0 = \pi_{\BD(D)} (a G u)$.
Then $(G \circ \kappa^*) \psi = \pi_{\BD(D)} (a G u)$ and 
$j(u) = \kappa \pi_{\BD(G)} u = \kappa(u_0) = \varphi$.

Next if $v\in \dom(\interior{G})$, then
 \begin{align*}
\gotb(u,v)
&=(aGu,\interior{G} v)_{H_1} + (mu,v)_{H_0}
    \\
& =-(DaGu,v)_{H_0} + (DaGu,v)_{H_0}
=0
=(\psi,0)_H
=(\psi,j(v))_H.
 \end{align*}	
If $v \in \BD(G)$, then Lemma~\ref{l:Gdot} gives
 \begin{eqnarray*}
(\psi,j(v))_H 
& = & (\kappa^*\psi,v)_{\BD(G)}  
= (G\kappa^*\psi, Gv)_{\BD(D)} 
= (\pi_{\BD(D)} (a G u), Gv)_{\BD(D)}  \\
& = & (a G u, Gv)_{\dom(D)} 
= (a G u, Gv)_{H_1} + (D a G u, DGv)_{H_0}  \\
& = & (a G u, Gv)_{H_1} + (m u, v)_{H_0}
= \gotb(u,v)
.  
\end{eqnarray*}
Since $\dom(G) = \BD(G) \oplus \dom(\interior{G})$ it follows that 
$\gotb(u,v) = (\psi,j(v))_H$ for all $v \in \dom(G)$.
So $\varphi \in \dom(A)$ and $A \varphi = \psi$.

`$A \subset \Lambda_H$'.
Let $\varphi\in \dom(A)$ and write $\psi=A\varphi$.
Then there exists a $u\in \dom(G)$ such that $j(u)=\varphi$ and
\begin{equation} \label{eq:psa}
  (aGu,Gv)_{H_1} +  (mu,v)_{H_0}
= \gotb(u,v) 
= (\psi,j(v))_H
 \end{equation}
for all $v \in \dom(G)$.
If $v \in \dom(\interior{G})$, then
\[
(aGu,\interior{G}v)_{H_1} + (mu,v)_{H_0}
= (\psi,j(v))_H=0
.  \]
So $a G u \in \dom((\interior{G})^*) = \dom(D)$ and 
$D a G u = - (\interior{G})^* a G u = m u$.
Moreover, 
\begin{equation}
\Lambda \pi_{\BD(G)} u = \pi_{\BD(D)}(a G u)
\label{epbdgd303;1}
\end{equation}
by the definition of $\Lambda$.
Note that $\kappa(\pi_{\BD(G)} u) = j(u) = \varphi$.

Now let $v \in \BD(G)$. 
Then (\ref{eq:psa}) gives
 \begin{align*}
   (\kappa^*\psi,v)_{\BD(G)}
& = (\psi,\kappa(v))_H
   \\ & = (aGu,Gv)_{H_1} + (mu,v)_{H_0}
   \\ & = (a G u,Gv)_{H_1} + (Da G u, DGv)_{H_0}
   \\ & = (a G u,Gv)_{\dom(D)}
   \\ & = (\pi_{\BD(D)}(a G u),Gv)_{\BD(D)}
   \\ & = (D\pi_{\BD(G)}(a G u),v)_{\BD(G)},
 \end{align*}
where we used Lemma~\ref{l:Gdot} in the last step.
So, $\kappa^*\psi=D\pi_{\BD(D)}(a G u)$.
Hence 
\[
(G \circ \kappa^*)(\psi) 
= \pi_{\BD(D)}(a G u) 
= \Lambda \pi_{\BD(G)} u
\]
by Lemma~\ref{l:Gdot} and (\ref{epbdgd303;1}).
Therefore $\varphi \in \dom(\Lambda_H)$ and $\Lambda_H \varphi = \psi$.
\end{proof}

We next show that the operator $\Lambda_H$ is invertible and 
determine its inverse.

\begin{prop}\label{p:dtinv} 
The operator $\Lambda_H$ is invertible and 
\[
\Lambda_H^{-1}\psi 
= \kappa\begin{pmatrix} \pi_{\BD(G)} & 0 \end{pmatrix} 
      \begin{pmatrix} m & -\interior{D} \\ -G & a^{-1} \end{pmatrix}^{-1}
      \begin{pmatrix} 1\\ -a^{-1}G \end{pmatrix} \kappa^*\psi           
\]
for all $\psi \in H$.
\end{prop}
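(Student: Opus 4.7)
The plan is to reduce the invertibility of $\Lambda_H$ to the invertibility of $\Lambda\colon \BD(G)\to \BD(D)$ from Theorem~\ref{t:DtNwd}, and then to simplify the resulting composition using the identity $\BD(G) = \kar(I-DG)$ from Lemma~\ref{l:bd}.

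For injectivity I would argue as follows: if $\Lambda_H\varphi = 0$, then Definition~\ref{dbdgd301} supplies a $u_0\in \BD(G)$ with $\kappa(u_0) = \varphi$ and $\Lambda u_0 = (G\circ \kappa^*)(0) = 0$. Invertibility of $\Lambda$ forces $u_0 = 0$, hence $\varphi = \kappa(u_0) = 0$.

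For surjectivity and the explicit inverse I would take $\psi\in H$, set $q_0 := G\kappa^*\psi$ (which lies in $\BD(D)$ by Corollary~\ref{cbdgd201}, since $\kappa^*\psi\in \BD(G)$), and put $u_0 := \Lambda^{-1}q_0 \in \BD(G)$. Then $\varphi := \kappa(u_0)$ satisfies $\Lambda_H\varphi = \psi$ directly from Definition~\ref{dbdgd301}, so
\[
\Lambda_H^{-1}\psi = \kappa\bigl(\Lambda^{-1}(G\kappa^*\psi)\bigr).
\]
Substituting the formula for $\Lambda^{-1}$ from Theorem~\ref{t:DtNwd} gives
\[
\Lambda_H^{-1}\psi
= \kappa\begin{pmatrix} \pi_{\BD(G)} & 0 \end{pmatrix}
   \begin{pmatrix} m & -\interior{D}\\ -G & a^{-1}\end{pmatrix}^{-1}
   \begin{pmatrix} DG\kappa^*\psi \\ -a^{-1}G\kappa^*\psi \end{pmatrix}.
\]

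The only non-routine step is to replace $DG\kappa^*\psi$ by $\kappa^*\psi$, which is immediate from Lemma~\ref{l:bd}: since $\kappa^*\psi\in \BD(G) = \kar(I-DG)$, we have $DG\kappa^*\psi = \kappa^*\psi$. After this substitution the last column factors as $\bigl(\begin{smallmatrix} 1\\ -a^{-1}G\end{smallmatrix}\bigr)\kappa^*\psi$, and the claimed formula drops out. I do not foresee any real obstacle; the only subtlety worth keeping an eye on is that $\Lambda_H$ is genuinely single-valued, which relies on $\bull G$ being a bijection $\BD(G)\to\BD(D)$ (Lemma~\ref{l:Gdot}) together with the injectivity of $\kappa^*$ (which holds because $\kappa$ has dense range).
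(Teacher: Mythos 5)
Your proposal is correct and takes essentially the same route as the paper: the formula is obtained by substituting $q_0 = G\kappa^*\psi$ into the expression for $\Lambda^{-1}$ from Theorem~\ref{t:DtNwd} and using $DG\kappa^*\psi = \kappa^*\psi$ on $\BD(G)$ (your Lemma~\ref{l:bd} is the same fact the paper invokes via Lemma~\ref{l:Gdot}). The only cosmetic difference is that the paper gets invertibility of $\Lambda_H$ in one line from the coercivity of the form $\gotb$ in Proposition~\ref{pbdgd303}, whereas you check injectivity and surjectivity directly from Definition~\ref{dbdgd301} and the invertibility of $\Lambda$; both arguments are sound.
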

\begin{proof}
Since the form $\gotb$ in Proposition~\ref{pbdgd303} is coercive, it 
follows that the operator $\Lambda_H$ is invertible.
Let $\varphi \in \dom(\Lambda_H)$ and write $\psi = \Lambda_H \varphi$.
Then there exists a $u_0\in \BD(G)$ such that $\kappa(u_0)=\varphi$ and $\DtN u_0= G\kappa^*\psi$.
By Theorem~\ref{t:DtNwd} we obtain that 
  \begin{align*}
     u_0 = \DtN^{-1} G\kappa^*\psi
      & = \begin{pmatrix} \pi_{\BD(G)} & 0 \end{pmatrix} 
            \begin{pmatrix} m & -\interior{D} \\ -G & a^{-1}\end{pmatrix}^{-1}
            \begin{pmatrix} D\\ -a^{-1}\end{pmatrix}G\kappa^*\psi
     \\ & = \begin{pmatrix} \pi_{\BD(G)} & 0 \end{pmatrix} 
            \begin{pmatrix} m & -\interior{D} \\ -G & a^{-1}\end{pmatrix}^{-1}
            \begin{pmatrix} I \\ -a^{-1}G\end{pmatrix}\kappa^*\psi,
  \end{align*}
 where we used Lemma~\ref{l:Gdot} in the last step.
Next apply $\kappa$ to both sides. 
Since the inverse matrix maps $H_0 \times H_1$ into 
$\dom(G) \times \dom(D)$ by Lemma~\ref{l:1storder}\ref{l:1storder-2},
the proposition follows.
\end{proof}

\section{Resolvent convergence} \label{Sbdgd4}

In this section we consider a sequence of Dirichlet-to-Neumann operators 
and show resolvent convergence.

Throughout this section we adopt the notation and assumptions 
as in the beginning of Section~\ref{Sbdgd2}.
Let $H$ be a Hilbert space and $\kappa \in \cl(\BD(G),H)$
injective with dense range.
Further, we let $m_n,m \in \cl(H_0)$ and $a_n,a \in \cl(H_1)$
for all $n \in \Ni$.
Let $\mu > 0$ and assume that $\RRe m_n, \RRe m \geq \mu I_{H_0}$ and 
$\RRe a_n, \RRe a \geq \mu I_{H_1}$ for all $n \in \Ni$.
Moreover, assume that $\sup_n \|a_n\|_{\cl(H_1)} < \infty$.
Let $\Lambda, \Lambda_1,\Lambda_2,\ldots$ be the Dirichlet-to-Neumann operators
from $\BD(G)$ into $\BD(D)$ 
associated with $- DaG + m, - Da_1G + m_1, - Da_2G + m_2, \ldots$
as in Definition~\ref{dbdgd2-4.5}.
Similarly, let 
$\Lambda_H, \Lambda^{(1)}_H,\Lambda^{(2)}_H,\ldots$ be the Dirichlet-to-Neumann operators
in $H$ as in Definition~\ref{dbdgd301}.

Throughout this section we suppose in addition that 
the inclusion $\dom(G)\hookrightarrow H_0$ is compact.

The compactness assumption is valid in our model case, Example~\ref{ex:classical}, 
if $\Omega$ has a continuous boundary or, equivalently, 
if $\Omega$ has the segment property.

We state two well-known consequences of the compactness assumption.

\begin{lemma} \label{lbdgd401}
\mbox{}
\begin{tabel} 
\item \label{lbdgd401-1}
There exists a $c > 0$ such that 
$\|u\|_{H_0}\leq c\|Gu\|_{H_1}$
for all $u\in \dom(G)\cap \kar(G)^{\bot_{H_0}}$.
\item \label{lbdgd401-2}
The space $\rge(G)$ is closed in $H_1$.
\end{tabel}
\end{lemma}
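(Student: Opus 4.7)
The plan is to treat this as a standard abstract Rellich/Poincar\'e pair: part \ref{lbdgd401-1} is a Poincar\'e-type inequality on the orthogonal complement of the kernel, proved by contradiction using the compact embedding $\dom(G)\hookrightarrow H_0$, and part \ref{lbdgd401-2} is then deduced from \ref{lbdgd401-1} via the standard argument that a bounded linear operator with closed range on the complement of its kernel has closed range globally. A preliminary observation that I will use repeatedly is that $\kar(G)$ is closed in $H_0$: indeed, if $u_n\in\kar(G)$ and $u_n\to u$ in $H_0$, then $Gu_n=0\to 0$ in $H_1$, and closedness of $G$ gives $u\in\dom(G)$ with $Gu=0$. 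Consequently $H_0=\kar(G)\oplus\kar(G)^{\bot_{H_0}}$, and the orthogonal projection onto $\kar(G)$ maps $\dom(G)$ into $\dom(G)$ (since $\kar(G)\subset\dom(G)$).

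For \ref{lbdgd401-1} I would argue by contradiction. Assume no such constant exists; then there is a sequence $(u_n)$ in $\dom(G)\cap\kar(G)^{\bot_{H_0}}$ with $\|u_n\|_{H_0}=1$ and $\|Gu_n\|_{H_1}\to 0$. In particular $(u_n)$ is bounded in $\dom(G)$ with the graph norm, so by the assumed compact embedding a subsequence, still denoted $(u_n)$, converges in $H_0$ to some $u\in H_0$. Since $Gu_n\to 0$ in $H_1$ and $G$ is closed, $u\in\dom(G)$ and $Gu=0$, so $u\in\kar(G)$. On the other hand $\kar(G)^{\bot_{H_0}}$ is closed in $H_0$, hence $u\in\kar(G)^{\bot_{H_0}}$ as well, forcing $u=0$. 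This contradicts $\|u\|_{H_0}=\lim_n\|u_n\|_{H_0}=1$.

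For \ref{lbdgd401-2}, let $(q_n)$ be a sequence in $\rge(G)$ converging to some $q\in H_1$, and pick $\tilde u_n\in\dom(G)$ with $G\tilde u_n=q_n$. Let $P$ be the $H_0$-orthogonal projection onto $\kar(G)$ and set $u_n=\tilde u_n-P\tilde u_n\in\dom(G)\cap\kar(G)^{\bot_{H_0}}$; then still $Gu_n=q_n$. Applying part \ref{lbdgd401-1} gives $\|u_n\|_{H_0}\le c\|q_n\|_{H_1}$, so $(u_n)$ is bounded in $H_0$, and since $(Gu_n)=(q_n)$ is convergent and hence bounded in $H_1$, the sequence $(u_n)$ is bounded in $\dom(G)$ with the graph norm. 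The compact embedding yields a subsequence converging in $H_0$ to some $u$; combined with $Gu_n\to q$ and the closedness of $G$, we obtain $u\in\dom(G)$ and $Gu=q$, proving $q\in\rge(G)$.

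Neither step is really hard; the only place where the argument can slip is in part \ref{lbdgd401-2}, where one must ensure that the replacement $u_n\mapsto u_n-Pu_n$ stays inside $\dom(G)$ and preserves $Gu_n$, which uses precisely the fact that $\kar(G)\subset\dom(G)$. Everything else is a straightforward invocation of the compact embedding together with closedness of $G$.
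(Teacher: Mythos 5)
Your proof is correct and follows essentially the same route as the paper: part~(a) by contradiction, using the compact embedding $\dom(G)\hookrightarrow H_0$ and the closedness of $G$ to produce a nonzero limit lying in $\ker(G)\cap\ker(G)^{\perp_{H_0}}$, and part~(b) by lifting a convergent sequence in $\ran(G)$ to representatives in $\dom(G)\cap\ker(G)^{\perp_{H_0}}$. The only (harmless) difference is that in part~(b) you invoke the compact embedding a second time, whereas the inequality from part~(a) already makes the lifted sequence Cauchy in $H_0$ (since $\|u_n-u_m\|_{H_0}\le c\,\|q_n-q_m\|_{H_1}$), so the closedness of $G$ alone suffices, which is what the paper's one-line proof of~(b) intends.
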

\begin{proof}
`\ref{lbdgd401-1}'.
Suppose not.
Then there exists a sequence $(u_n)_{n \in \Ni}$ in 
$\dom(G)\cap \kar(G)^{\bot_{H_0}}$ such that $\|u_n\|_{H_0}=1$ and 
 \begin{equation}
   \|u_n\|_{H_0}\geq n\|Gu_n\|_{H_1} 
\label{elbdgd401;1}
\end{equation}
for all $n \in \Ni$.
 Then $(u_n)_{n \in \Ni}$ is bounded in $\dom(G)$.
We may assume without loss of generality that there exists a $u \in \dom(G)$ such that 
$\lim u_n = u$ weakly in $\dom(G)$.
Since the inclusion $\dom(G) \subset H_0$ is compact 
we obtain that $\lim u_n = u$ in $H_0$.
Then $u \in \ker(G)^{\perp_{H_0}}$ since $\ker(G)^{\perp_{H_0}}$ is closed in $H_0$.
Moreover, $\|u\|_{H_0} = 1$ and in particular $u \neq 0$.
Alternatively, (\ref{elbdgd401;1}) implies that
$\|Gu\|_{H_1}\leq \liminf_{n\to\infty}\|Gu_n\|_{H_1}=0$. 
So $u\in \kar(G)$.
Hence $u \in \ker(G) \cap \ker(G)^{\perp_{H_0}} = \{ 0 \} $
and $u = 0$.
This is a contradiction.

`\ref{lbdgd401-2}'.
This is a consequence of Statement~\ref{lbdgd401-1} and the closedness of $G$.
\end{proof}

We provide $\rge(G)$ with the induced norm of $H_1$.
Throughout the remainder of this section we denote by
$\iota \colon \rge(G)\hookrightarrow H_1$ the embedding map.
Note that $\iota^*$ is the orthogonal projection from $H_1$ onto $\ran(G)$.
The main result of this section is the following theorem.

\begin{thm} \label{tbdgd403}
Suppose that $\lim m_n = m$ in the weak operator topology on $\cl(H_0)$ 
and $\lim_{n \to \infty} (\iota^* a_n\iota)^{-1} = (\iota^* a\iota)^{-1}$
in the weak operator topology on $\cl(\ran(G))$.
Then 
\[
\lim (\Lambda_H^{(n)})^{-1} = \Lambda_H^{-1}
\]
in the weak operator topology on $\cl(H)$.
Moreover, if in addition the map $\kappa$ is compact, then the convergence is 
uniform in $\cl(H)$.
\end{thm}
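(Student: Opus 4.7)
The plan is to work via the form characterization of $\Lambda_H^{(n)}$ established in Proposition~\ref{pbdgd303}. For $\psi\in H$, let $u_n\in\dom(G)$ be the unique solution, given by Lax--Milgram applied to the uniformly coercive forms $\gotb_n(u,v)=(a_nGu,Gv)_{H_1}+(m_nu,v)_{H_0}$, of
\[
\gotb_n(u_n,v) = (\psi,j(v))_H \qquad (v\in\dom(G)),
\]
so that $\Lambda_H^{(n)-1}\psi = j(u_n) = \kappa\pi_{\BD(G)}u_n$. Likewise define $u$ for the limit coefficients. Testing with $v=u_n$, using the uniform lower bound $\RRe\gotb_n\geq\mu\|\cdot\|_{\dom(G)}^2$ and the uniform boundedness principle (which, combined with WOT convergence, gives $\sup_n\|m_n\|<\infty$), one obtains $\|u_n\|_{\dom(G)}\leq C\|\psi\|_H$. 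By the compactness of $\dom(G)\hookrightarrow H_0$, a subsequence of $u_n$ converges weakly in $\dom(G)$ and strongly in $H_0$ to some $\widetilde u\in\dom(G)$.

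The next step is to identify $\widetilde u$ as the solution of the limit form problem. To this end, introduce the auxiliary sequences $Y_n := (\iota^*a_n\iota)(Gu_n)\in\ran(G)$ and $q_n:=a_nGu_n-G\kappa^*\psi$; by Lemma~\ref{l:invdes} one has $q_n\in\dom(\interior{D})$ with $\interior{D}q_n = m_nu_n-\kappa^*\psi$. Both sequences are bounded (using $\sup\|a_n\|<\infty$), so we may assume $Y_n\to Y$ weakly in $\ran(G)$ and $q_n\to q^\ast$ weakly in $H_1$ along a further subsequence. Since $m_nu_n\to m\widetilde u$ weakly in $H_0$ (combining strong convergence of $u_n$ with WOT convergence of $m_n$), closedness of $\interior{D}$ gives $q^\ast\in\dom(\interior{D})$ with $\interior{D}q^\ast = m\widetilde u-\kappa^*\psi$. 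Passing to the limit in $\gotb_n(u_n,v)=(\psi,j(v))$ with $v\in\dom(G)$ fixed, and writing $(a_nGu_n,Gv)_{H_1}=(Y_n,Gv)_{\ran(G)}$, one obtains
\[
(Y,Gv)_{\ran(G)}+(m\widetilde u,v)_{H_0} = (\psi,j(v))_H \qquad (v\in\dom(G)).
\]

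The crux is to show $Y = (\iota^*a\iota)G\widetilde u$, i.e.\ $G\widetilde u = bY$ where $b=(\iota^*a\iota)^{-1}$, which by the decomposition $\dom(G)=\BD(G)\oplus\dom(\interior G)$ forces $\widetilde u$ to satisfy the limit form equation $\gotb(\widetilde u,v)=(\psi,j(v))$. Here we use the defining relation $b_nY_n=Gu_n$ together with $b_n\to b$ in the WOT and a compensated-compactness / Minty argument: testing with $v\in\dom(\interior G)$ yields $\iota Y\in\dom(D)$ with $D\iota Y=m\widetilde u$, which parallels the pre-limit identity $D\iota Y_n=m_nu_n$, while pairing the strong convergence $u_n\to\widetilde u$ in $H_0$ with the weakly convergent divergence $\interior D q_n$ (using the compact embedding $H_0\hookrightarrow\dom(G)'$ dual to $\dom(G)\hookrightarrow H_0$) gives convergence of $(Y_n,Gu_n)_{\ran(G)}$ to $(Y,G\widetilde u)_{\ran(G)}$. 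Inserting this into the Minty inequality $\RRe(b_n^{-1}(Gu_n-G\widetilde u),Gu_n-G\widetilde u)_{\ran(G)}\geq0$, together with the WOT assumption on $b_n$, forces $G\widetilde u=bY$. By uniqueness of the solution to the limit form problem, $\widetilde u=u$; hence the whole sequence converges and $(\Lambda_H^{(n)-1}\psi,z)_H=(u_n,\kappa^*z)_{\dom(G)}\to(u,\kappa^*z)_{\dom(G)}=(\Lambda_H^{-1}\psi,z)_H$ for every $z\in H$, giving the WOT convergence.

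For the uniform part, write $\Lambda_H^{(n)-1}-\Lambda_H^{-1}=\kappa\,(R_n-R)\,\kappa^*$, where $R_n\colon\BD(G)\to\BD(G)$ sends $\eta\mapsto\pi_{\BD(G)}u_n^\eta$ (with $u_n^\eta$ the variational solution corresponding to the datum $\eta=\kappa^*\psi$), and $R$ is its limit analogue; the $R_n$ are uniformly bounded and, by the argument just given, satisfy $R_n\to R$ in the WOT on $\BD(G)$. When $\kappa$ is compact, so is $\kappa^*$. A standard fact—if $K_1,K_2$ are compact and $A_n$ are uniformly bounded with $A_n\to A$ in the WOT, then $K_1A_nK_2\to K_1AK_2$ in operator norm (proved by observing that $K_1A_n\to K_1A$ strongly because $K_1$ turns weak into strong convergence, and that strong convergence is uniform on the precompact set $K_2(B_H)$)—then yields $\|\Lambda_H^{(n)-1}-\Lambda_H^{-1}\|_{\cl(H)}\to 0$. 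The main obstacle is the identification step in the second paragraph, where the mere WOT convergence of $(\iota^*a_n\iota)^{-1}$ is too weak to pass directly to the limit in the bilinear quantity $(a_nGu_n,Gv)$, and compensated compactness via the divergence structure together with Minty's monotonicity trick is required.
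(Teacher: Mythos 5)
Your overall architecture (form characterization, uniform a priori bound, extraction of a weak limit, identification of the limit, subsequence argument, and the compact-sandwich lemma for the norm convergence when $\kappa$ is compact) is sound, and your div--curl observation that $(Y_n,Gu_n)_{\ran(G)}\to(Y,G\widetilde u)_{\ran(G)}$ is correct: writing $(Y_n,Gu_n)=-(\interior{D}q_n,u_n)_{H_0}+(G\kappa^*\psi,Gu_n)_{H_1}$ it follows from the weak--strong pairing. The genuine gap is the Minty step used to conclude $G\widetilde u=bY$ with $b=(\iota^*a\iota)^{-1}$. As written, the inequality $\RRe\bigl(b_n^{-1}(Gu_n-G\widetilde u),Gu_n-G\widetilde u\bigr)\geq 0$ expands into four terms, two of which are $(\iota^*a_n\iota\,G\widetilde u,Gu_n)$ and $(\iota^*a_n\iota\,G\widetilde u,G\widetilde u)$; nothing whatsoever is assumed about convergence of $a_n$ or of $\iota^*a_n\iota$ (only of its inverse), so these terms need not converge --- indeed Remark~\ref{rbdgd421} stresses that WOT convergence of $a_n$ and of $(\iota^*a_n\iota)^{-1}$ are independent conditions. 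Replacing $b_n^{-1}$ by $b_n$ does not help: the cross term $(b_nZ,Y_n)$ pairs a WOT-convergent operator with a merely weakly convergent vector, and this does not pass to the limit (take $b_n=I+P_n$ with $P_nz=(z,e_1)e_n$ and $Y_n=e_n$ in $\ell^2$). So the identification step, which both the WOT statement and the uniform statement rest on, is not justified.

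The fix is already contained in your own auxiliary objects, and it is exactly the mechanism of the paper. Since $\pi_{\BD(D)}(a_nGu_n)=\bull{G}\kappa^*\psi$, you have $Y_n-G\kappa^*\psi=\iota^*q_n\in\ran(G)\cap\dom(\interior{D})=\dom(\interior{D}\iota)$ with $(\interior{D}\iota)\iota^*q_n=m_nu_n-\kappa^*\psi$ bounded in $H_0$; hence $(Y_n)$ is bounded in the graph norm of $\interior{D}\iota$, and by Lemma~\ref{l:refntd}\ref{l:refntd-1.5} (a consequence of the standing compactness assumption $\dom(G)\hookrightarrow H_0$) the convergence $Y_n\to Y$ is \emph{strong} in $H_1$. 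Then $Gu_n=b_nY_n=b_n(Y_n-Y)+b_nY\rightharpoonup bY$ weakly, using $\sup_n\|b_n\|<\infty$ and $b_n\to b$ in the WOT, so $G\widetilde u=bY$ with no monotonicity argument at all. This is precisely what the paper does, in block-operator form: Lemma~\ref{l:refntd}\ref{l:refntd-4} writes $\Lambda^{-1}$ through the operator $\bigl(\begin{smallmatrix} m & -\interior{D}\iota\\ -\iota^*G & (\iota^*a\iota)^{-1}\end{smallmatrix}\bigr)$, so that the hypothesis on $(\iota^*a_n\iota)^{-1}$ enters as a zeroth-order coefficient, and Lemma~\ref{l:con} (WOT coefficients plus compact embedding of the domain of the skew-adjoint part, hence strong convergence of the resolvent vectors) yields Proposition~\ref{pbdgd406}; your form-based route is a legitimate alternative once the Minty step is replaced by this strong flux compactness. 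The remaining parts of your argument (uniform boundedness of $m_n$, the subsequence argument, and the norm convergence via compactness of $\kappa$ and $\kappa^*$) are fine.
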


If the $m_n$ are multiplication operators, then convergence in the 
weak operator topology can be rephrased.

\begin{example} \label{xbdgd403.4}
If $\Omega \subset \Ri^d$ is open, $H_0 = L_2(\Omega)$, 
$V_n,V \in L_\infty(\Omega)$ and $m_n,m$ are the multiplication 
operators associated with $V_n$ and $V$ for all $n \in \Ni$, then $\lim m_n = m$
in the weak operator topology on $\cl(H_0)$ if and only if
$\lim V_n = V$ in the weak$^*$-topology on $L_\infty(\Omega)$.
\end{example}

For the proof of Theorem~\ref{tbdgd403} we need some preliminary results.
The first one contains an identity for $\DtN$ involving $\ran(G)$.

\begin{lemma} \label{l:refntd} 
\mbox{}
\begin{tabel}
\item  \label{l:refntd-0.5}
Let $q \in H_1$.
Then $q \in  \dom(\interior{D})$ if and only if $\iota^* q \in \dom(\interior{D})$.
In that case $\interior{D} q = \interior{D} \iota^* q$.
\item  \label{l:refntd-1} 
The operator $\interior{D} \iota \colon \ran(G) \cap \dom(\interior{D}) \to H_0$
is a closed and densely defined operator in $\ran(G)$.
Moreover, $(\interior{D} \iota)^* = - \iota^* G$.
\item  \label{l:refntd-1.2} 
The operator $\interior{D} \iota$ is injective.
\item  \label{l:refntd-1.5}
The inclusion $\dom( \interior{D} \iota ) \subset H_1$ is compact.
\item  \label{l:refntd-2} 
The operator 
$\begin{pmatrix} m & -\interior{D}\iota \\ -\iota^*G & (\iota^*a\iota)^{-1}\end{pmatrix}
\colon \dom(G) \times \big( \ran(G) \cap \dom(\interior{D}) \big) \to H_0 \times \ran(G)$ 
is invertible.
\item  \label{l:refntd-3} 
The operator 
$\begin{pmatrix} m & -\interior{D}\iota \\ -\iota^*G & (\iota^*a\iota)^{-1}\end{pmatrix}^{-1}$
is bounded from $H_0 \times \ran(G)$ into $\dom(G) \times \dom(\interior{D})$.
\item  \label{l:refntd-4} 
If $q_0\in \BD(D)$, then
\[
\DtN^{-1} q_0 
= \begin{pmatrix} \pi_{\BD(G)} & 0 \end{pmatrix} 
   \begin{pmatrix} m & -\interior{D}\iota \\ -\iota^*G & (\iota^*a\iota)^{-1}\end{pmatrix}^{-1}
   \begin{pmatrix} D\\ -(\iota^*a\iota)^{-1}\iota^*\end{pmatrix}q_0.
\]
\end{tabel}
\end{lemma}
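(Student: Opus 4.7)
My plan is to prove the seven statements in order. The structural identity $\ran(G)^{\bot_{H_1}} = \ker(G^*) = \ker(\interior{D})$---which uses $\interior{D}=-G^*$ and the closedness of $\ran(G)$ from Lemma~\ref{lbdgd401}\ref{lbdgd401-2}---drives Parts~\ref{l:refntd-0.5}--\ref{l:refntd-1.2}, while Lemma~\ref{l:wp} handles Parts~\ref{l:refntd-2}--\ref{l:refntd-3}.

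For Part~\ref{l:refntd-0.5}, decompose $q = \iota\iota^* q + (q - \iota\iota^* q)$; the second summand lies in $\ran(G)^{\bot_{H_1}} = \ker(\interior{D}) \subset \dom(\interior{D})$, so $q \in \dom(\interior{D})$ iff $\iota^* q \in \dom(\interior{D})$, with $\interior{D} q = \interior{D}\iota^* q$ in that case. Part~\ref{l:refntd-1.2} is then immediate, since $\ker(\interior{D}\iota) = \ran(G) \cap \ran(G)^{\bot_{H_1}} = \{0\}$. For Part~\ref{l:refntd-1}, density of $\ran(G) \cap \dom(\interior{D})$ in $\ran(G)$ comes from projecting an approximating sequence $q_n \in \dom(\interior{D}) \to q$ via $\iota^*$ and invoking Part~\ref{l:refntd-0.5}, while closedness of $\interior{D}\iota$ is inherited from the closedness of $\interior{D}$ and of $\ran(G)$. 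The adjoint formula reduces to the direct computation $(\interior{D}\iota q, p)_{H_0} = -(G^* q, p)_{H_0} = -(q, Gp)_{H_1} = -(q, \iota^* Gp)_{\ran(G)}$ for $q \in \ran(G) \cap \dom(\interior{D})$ and $p \in \dom(G)$; the reverse inclusion uses Part~\ref{l:refntd-0.5} to extend the defining identity of $(\interior{D}\iota)^*$ from $\ran(G) \cap \dom(\interior{D})$ to all of $\dom(\interior{D})$, and then invokes $\dom((\interior{D})^*) = \dom(G)$.

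Part~\ref{l:refntd-1.5} is the step I expect to be the main obstacle. Given $(q_n)$ bounded in the graph norm of $\interior{D}\iota$, write $q_n = G u_n$ with $u_n \in \dom(G) \cap \ker(G)^{\bot_{H_0}}$; by Lemma~\ref{lbdgd401}\ref{lbdgd401-1}, $(u_n)$ is bounded in $\dom(G)$, so compactness of $\dom(G) \hookrightarrow H_0$ yields, along a subsequence, $u_n \to u$ strongly in $H_0$ and weakly in $\dom(G)$; in particular $q_n \rightharpoonup Gu$ weakly in $H_1$. The bound on $\interior{D} q_n = -G^*G u_n$ produces, along a further subsequence, a weak limit $G^*G u_n \rightharpoonup v$ in $H_0$, and passing to the limit in $(G^*G u_n, w)_{H_0} = (G u_n, G w)_{H_1}$ for $w \in \dom(G)$ identifies $v = G^* G u$ (and shows $Gu \in \dom(G^*)$). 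Consequently $\|q_n\|_{H_1}^2 = (G^*G u_n, u_n)_{H_0} \to (G^*G u, u)_{H_0} = \|Gu\|_{H_1}^2$, which together with weak convergence upgrades to $q_n \to Gu$ strongly in $H_1$.

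Parts~\ref{l:refntd-2} and~\ref{l:refntd-3} mirror the proof of Lemma~\ref{l:1storder} on the Hilbert space $H_0 \times \ran(G)$: the diagonal block $\begin{pmatrix} m & 0 \\ 0 & (\iota^* a \iota)^{-1}\end{pmatrix}$ is coercive---since $\iota^* a \iota$ inherits coercivity from $a$ via $\RRe(\iota^* a \iota q, q)_{\ran(G)} = \RRe(a q, q)_{H_1} \geq \mu\|q\|^2$ for $q \in \ran(G)$, and its inverse is coercive exactly as in the proof of Lemma~\ref{l:1storder}---while the off-diagonal block is skew-adjoint by Part~\ref{l:refntd-1}; Lemma~\ref{l:wp} supplies invertibility and boundedness into $\dom(G) \times \dom(\interior{D}\iota)$, which embeds continuously into $\dom(G) \times \dom(\interior{D})$ because the two graph norms coincide on $\ran(G) \cap \dom(\interior{D})$. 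For Part~\ref{l:refntd-4}, reparametrize the representation of $\DtN^{-1}$ from Theorem~\ref{t:DtNwd} (obtained via Lemma~\ref{l:invdes}\ref{l:invdes-2}) using $p := \iota^* q \in \ran(G)$: by Part~\ref{l:refntd-0.5} the condition $q - q_0 \in \dom(\interior{D})$ becomes $p - \iota^* q_0 \in \dom(\interior{D}\iota)$ with $\interior{D}(q - q_0) = \interior{D}\iota(p - \iota^* q_0)$; the relation $q = aGu$ gives $p = (\iota^* a \iota)(Gu)$, equivalently $-\iota^* G u + (\iota^* a \iota)^{-1}(p - \iota^* q_0) = -(\iota^* a \iota)^{-1} \iota^* q_0$; and $Dq = mu$ rewrites as $mu - \interior{D}\iota(p - \iota^* q_0) = D q_0$. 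Assembling these in block form and projecting the first component onto $\BD(G)$ recovers the stated formula.
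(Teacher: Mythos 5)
Your proposal is correct and follows essentially the same route as the paper: the orthogonal decomposition along $\ran(G)^{\perp_{H_1}}=\ker(\interior{D})$ for part~\ref{l:refntd-0.5}, the same adjoint computation and density argument for part~\ref{l:refntd-1}, the same $q_n=Gu_n$ trick with Lemma~\ref{lbdgd401} and the compact embedding $\dom(G)\subset H_0$ for part~\ref{l:refntd-1.5}, Lemma~\ref{l:wp} on $H_0\times\ran(G)$ for parts~\ref{l:refntd-2} and~\ref{l:refntd-3}, and the same block reformulation of the Neumann problem for part~\ref{l:refntd-4}. The only (minor) deviation is your injectivity argument in part~\ref{l:refntd-1.2}, where $\ker(\interior{D}\iota)=\ran(G)\cap\ker(\interior{D})=\ran(G)\cap\ran(G)^{\perp_{H_1}}=\{0\}$ is slightly more direct than the paper's computation via $q=Gu$; both are valid.
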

\begin{proof}
`\ref{l:refntd-0.5}'.
First $q - \iota^* q 
\in (\ran(G))^{\perp_{H_1}} 
= \ker(G^*) 
= \ker(\interior{D}) 
\subset \dom(\interior{D})$.
This shows the equivalence.
Since $\interior{D}(q - \iota^* q) = 0$, the last statement follows.

`\ref{l:refntd-1}'. 
Let $q \in \ran(G)$.
Since $\dom(\interior{D})$ is dense in $H_1$
there exists a sequence $(q_n)_{n \in \Ni}$ in $\dom(\interior{D})$ such that 
$\lim q_n = q$ in $H_1$.
Then $\iota^* q_n \in \ran(G) \cap \dom(\interior{D})$ for all $n \in \Ni$ by 
Statement~\ref{l:refntd-0.5} and $\lim \iota^* q_n = \iota^* q = q$ in $H_1$.
So $\ran(G) \cap \dom(\interior{D})$ is dense in $\ran(G)$.

Because $\ran(G)$ is closed in $H_1$ and $\interior{D}$ is a closed operator
one deduces easily that the operator $\interior{D} \iota$ is closed.
It remains to show that $(\interior{D} \iota)^* = - \iota^* G$.

Let $u \in \dom((\interior{D} \iota)^*)$.
Write $q = (\interior{D} \iota)^* u$.
Note that $q \in \ran(G)$.
Let $q' \in \dom(\interior{D})$.
Then Statement~\ref{l:refntd-0.5} implies that
\[
(u, \interior{D} q')_{H_0}
= (u, \interior{D} \iota^* q')_{H_0}
= (u, (\interior{D} \iota) \iota^* q')_{H_0}
= ( (\interior{D} \iota)^* u, \iota^* q')_{\ran(G)}
= (q, \iota^* q')_{\ran(G)}
= (q, q')_{H_1}
.  \]
So $u \in \dom((\interior{D})^*) = \dom(G)$
and $G u = - (\interior{D})^* u = - q$.
Therefore, $- \iota^* G u = q = (\interior{D} \iota)^* u$.
This implies that $(\interior{D} \iota)^* \subset - \iota^* G$.
The converse inclusion is easier and is left to the reader.

`\ref{l:refntd-1.2}'.
Let $q \in \ran(G) \cap \dom(\interior{D})$ and suppose that 
$\interior{D} \iota q = 0$.
There exists a $u \in \dom(G) \cap (\ker G)^{\perp_{H_0}}$ 
such that $q = Gu$.
Then 
$\|G u\|_{H_1}^2 
= - (q, (\interior{D})^*u)_{H_1}
= - (\interior{D} \iota q, u)_{H_0}
= 0$.
So $u \in \ker G$ and $u = 0$.

`\ref{l:refntd-1.5}'.
Let $q,q_1,q_2,\ldots \in \dom(\interior{D} \iota)$ and suppose that 
$\lim q_n = q$ weakly in $\dom(\interior{D} \iota)$.
For all $n \in \Ni$ there exists a unique $u_n \in \dom(G) \cap \ker(G)^{\perp_{H_0}}$ 
such that $q_n = G u_n$.
Since $\lim q_n = q$ weakly in $H_1$, the sequence $(q_n)_{n \in \Ni}$ is 
bounded in $H_1$.
Hence the sequence $(u_n)_{n \in \Ni}$ is bounded in $H_0$ by 
Lemma~\ref{lbdgd401}\ref{lbdgd401-1}.
Passing to a subsequence if necessary, there exists a $u \in H_0$ 
such that $\lim u_n = u$ weakly in $H_0$.
Since $G$ is a weakly closed operator, one deduces that $u \in \dom(G)$ and $G u = q$.
Then $\lim u_n = u$ weakly in $\dom(G)$, so $\lim u_n = u$ strongly in $H_0$
by the compactness assumption.
Note that $G^* = - \interior{D}$.
So 
\[
\lim_{n \to \infty} \|q_n\|_{H_1}^2
= \lim_{n \to \infty} (q_n, G u_n)_{H_1}
= \lim_{n \to \infty} (- \interior{D} q_n, u_n)_{H_0}
= (- \interior{D} q, u)_{H_0}
= (q, G u)_{H_0}
= \|q\|_{H_1}^2
.  \]
Hence $\lim q_n = q$ in $H_1$.

`\ref{l:refntd-2}' and `\ref{l:refntd-3}'.
This is as in the proof of Lemma~\ref{l:1storder}\ref{l:1storder-1} and~\ref{l:1storder-2}.

`\ref{l:refntd-4}'.
Let $q_0 \in \BD(D)$.
By Proposition~\ref{pbdgd204} there exists a unique $u \in \dom(DaG)$ 
such that $mu - D a G u = 0$ and $a G u - q_0 \in \dom(\interior{D})$.
Then $\Lambda^{-1} q_0 = \pi_{BD(G)} u$.
Write $q = a G u$.
Then $q - q_0 \in \dom(\interior{D})$, so 
$\interior{D} (q - q_0) 
   = \interior{D} \iota^* (q - q_0) 
   = (\interior{D} \iota) \iota^* (q - q_0)$
by Statement~\ref{l:refntd-0.5}.
Therefore 
\begin{equation}
D q_0 
= mu - \interior{D} (q - q_0)
= mu - (\interior{D} \iota) \iota^* (q - q_0)
.
\label{el:refntd;1}
\end{equation}
Also $\iota^* q = \iota^* a G u = (\iota^* a \iota) \iota^* G u$.
Hence $(\iota^* a \iota)^{-1} \iota^* q = \iota^* G u$
and 
$- \iota^* G u + (\iota^* a \iota)^{-1} \iota^* (q - q_0)
= - (\iota^* a \iota)^{-1} \iota^* q_0$.
Together with (\ref{el:refntd;1}) this gives
\[
\begin{pmatrix} m & -\interior{D}\iota \\ -\iota^*G & (\iota^*a\iota)^{-1}\end{pmatrix}
\begin{pmatrix} u \\ \iota^* (q - q_0) \end{pmatrix}
= \begin{pmatrix} D\\ -(\iota^*a\iota)^{-1}\iota^*\end{pmatrix} q_0
.
\]
Finally use Statement~\ref{l:refntd-2}.
\end{proof}

Next we need a sequential version of Lemma~\ref{l:wp}.

\begin{lemma}\label{l:con} 
Let $\widetilde H$ be a Hilbert space, $M\in \cl(\widetilde H)$ and $A$ a skew-adjoint operator in $\widetilde H$.
Further let $(M_n)_{n \in \Ni}$ be a sequence in $\cl(\widetilde H)$
and suppose that $\lim M_n = M$ in the weak operator topology on $\cl(\widetilde H)$.
Assume that the inclusion $\dom(A)\subset \widetilde H$ is compact and that there exists 
a $\lambda>0$ such that $\RRe M_n\geq \lambda I_{\widetilde H}$ for all $n\in \Ni$. 
Let $(x_n)_{n \in \Ni}$ be a sequence in $\widetilde H$ which converges weakly to $x \in \widetilde H$.
Then $M + A$ is invertible and 
$\lim_{n \to \infty} (M_n + A)^{-1} x_n = (M + A)^{-1} x$ weakly in $\dom(A)$.
\end{lemma}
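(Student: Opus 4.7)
The plan is to first check that both $M+A$ and each $M_n+A$ are invertible, then extract a weakly convergent subsequence from $y_n := (M_n+A)^{-1} x_n$, pass to the limit using the compact embedding, and identify the limit as $(M+A)^{-1}x$ using uniqueness.

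First I would verify that the hypotheses of Lemma~\ref{l:wp} are satisfied for both $M_n+A$ and $M+A$. Each $M_n$ is coercive with constant $\lambda$ by assumption. For $M$, weak operator convergence gives $\RRe (Mx,x)_{\widetilde H} = \lim_n \RRe (M_n x,x)_{\widetilde H} \geq \lambda \|x\|_{\widetilde H}^2$, so $M$ is coercive with the same constant. Hence Lemma~\ref{l:wp} yields invertibility of $M+A$ and of each $M_n+A$, together with the uniform bound $\|(M_n+A)^{-1}\|_{\widetilde H \to \dom(A)} \leq (1 + \lambda + \|M_n\|)/\lambda$. Since weak operator convergence plus the uniform boundedness principle yields $C := \sup_n \|M_n\|_{\cl(\widetilde H)} < \infty$, the operator norms of $(M_n+A)^{-1}$ as maps $\widetilde H \to \dom(A)$ are uniformly bounded.

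Next, setting $y_n = (M_n + A)^{-1} x_n$, the sequence $(y_n)_{n \in \Ni}$ is bounded in $\dom(A)$ because $(x_n)$ is bounded in $\widetilde H$. By reflexivity, passing to a subsequence (still denoted $y_n$), there exists $y \in \dom(A)$ with $y_n \to y$ weakly in $\dom(A)$. The compactness of $\dom(A) \hookrightarrow \widetilde H$ upgrades this to strong convergence $y_n \to y$ in $\widetilde H$, while $A y_n \to A y$ weakly in $\widetilde H$.

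The key step (and main obstacle) is passing to the limit in the identity $M_n y_n + A y_n = x_n$ to conclude $(M+A)y = x$. For the term $M_n y_n$, one writes, for arbitrary $z \in \widetilde H$,
\[
(M_n y_n - My, z)_{\widetilde H}
= (M_n(y_n - y), z)_{\widetilde H} + ((M_n - M) y, z)_{\widetilde H}.
\]
The second term converges to $0$ by weak operator convergence $M_n \to M$; the first is bounded by $C \, \|y_n - y\|_{\widetilde H} \|z\|_{\widetilde H}$, which converges to $0$ by strong convergence of $y_n$ to $y$ in $\widetilde H$. Hence $M_n y_n \to M y$ weakly in $\widetilde H$. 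Combined with $x_n \to x$ weakly and $A y_n \to A y$ weakly, we obtain $My + Ay = x$, so $y = (M+A)^{-1} x$ by invertibility of $M+A$.

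Finally, to upgrade convergence of the subsequence to convergence of the whole sequence, I would run the standard subsequence argument: since every subsequence of $(y_n)$ has a further weakly convergent subsequence in $\dom(A)$, and the argument above forces every such weak limit to coincide with $(M+A)^{-1}x$, the full sequence $(y_n)$ converges weakly in $\dom(A)$ to $(M+A)^{-1}x$, completing the proof.
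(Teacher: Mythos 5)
Your proof is correct and follows essentially the same route as the paper: invertibility and the uniform resolvent bound via Lemma~\ref{l:wp}, weak compactness of $z_n=(M_n+A)^{-1}x_n$ in $\dom(A)$, the compact embedding to get strong convergence in $\widetilde H$ and hence $M_nz_n\to Mz$ weakly, passing to the limit in $M_nz_n+Az_n=x_n$, and a subsequence argument. Your write-up merely makes explicit a few points the paper leaves implicit (coercivity of $M$ from weak operator convergence, $\sup_n\|M_n\|<\infty$ via uniform boundedness, and the splitting used to show $M_nz_n\to Mz$ weakly).
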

\begin{proof}
Obviously $\RRe M \geq \lambda I_{\widetilde H}$, so $M + A$ is invertible by Lemma~\ref{l:wp}.
Consider $z_n = (M_n + A)^{-1} x_n$ for all $n \in \Ni$.
Then $\|z_n\|_{\dom(A)} \leq \frac{1 + \lambda + \|M_n\|}{\lambda} \|x_n\|_{\widetilde H}$ for all 
$n \in \Ni$ by Lemma~\ref{l:wp}.
So the sequence $(z_n)_{n \in \Ni}$ is bounded in $\dom(A)$.
Passing to a subsequence, we may assume without loss of generality that there 
exists a $z \in \dom(A)$ such that $\lim z_n = z$ weakly in $\dom(A)$.
Then $\lim z_n = z$ in $\widetilde H$ by the compactness assumption.
Consequently, $\lim M_n z_n = M z$ weakly in $\widetilde H$.
Now $M_n z_n + A z_n = x_n$ for all $n \in \Ni$.
Take the limit $n \to \infty$ and notice that both sides converge weakly in $\widetilde H$.
It follows that $M z + A z = x$, so $z = (M + A)^{-1} x$.
Now the lemma follows by a standard subsequence argument.
\end{proof}

We need one more convergence result for the proof of Theorem~\ref{tbdgd403}.
This result is also of independent interest.

\begin{prop} \label{pbdgd406}
Suppose that $\lim m_n = m$ in the weak operator topology on $\cl(H_0)$ 
and $\lim (\iota^* a_n\iota)^{-1} = (\iota^* a\iota)^{-1}$
in the weak operator topology on $\cl(\ran(G))$.
Let $q,q_1,q_2,\ldots \in \BD(D)$ and assume that $\lim q_n = q$ in $\BD(D)$.
Then 
\[
\lim_{n \to \infty} \Lambda_n^{-1} q_n= \Lambda^{-1} q
\]
weakly in $\BD(G)$.
\end{prop}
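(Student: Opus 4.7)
My plan is to reduce the proposition to an application of Lemma~\ref{l:con} via the representation formula from Lemma~\ref{l:refntd}\ref{l:refntd-4}. Set $\widetilde H = H_0 \times \ran(G)$ and define, on this space, the skew-adjoint operator
\[
A = \begin{pmatrix} 0 & -\interior{D}\iota \\ -\iota^*G & 0 \end{pmatrix}
\]
with $\dom(A) = \dom(G) \times \dom(\interior{D}\iota)$, together with the bounded diagonal operators
$M_n = \begin{pmatrix} m_n & 0 \\ 0 & (\iota^*a_n\iota)^{-1} \end{pmatrix}$ and the analogous $M$. Lemma~\ref{l:refntd}\ref{l:refntd-4} gives
\[
\Lambda_n^{-1} q_n
= \begin{pmatrix} \pi_{\BD(G)} & 0 \end{pmatrix}
(M_n + A)^{-1} x_n,
\qquad
x_n = \begin{pmatrix} D q_n \\ -(\iota^*a_n\iota)^{-1}\iota^* q_n \end{pmatrix},
\]
and an identical formula for $\Lambda^{-1} q$, so it suffices to prove $(M_n+A)^{-1} x_n \to (M+A)^{-1} x$ weakly in $\dom(A)$ and then compose with the continuous projection.

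Next I would verify the hypotheses of Lemma~\ref{l:con}. Skew-adjointness of $A$ on $\widetilde H$ is immediate from Lemma~\ref{l:refntd}\ref{l:refntd-1}, which gives $(\interior{D}\iota)^* = -\iota^*G$. Compactness of $\dom(A) \hookrightarrow \widetilde H$ follows from the standing compactness assumption $\dom(G) \hookrightarrow H_0$ together with Lemma~\ref{l:refntd}\ref{l:refntd-1.5} (noting that $\ran(G)$ carries the $H_1$-norm, so that compactness into $H_1$ yields compactness into $\ran(G)$). For uniform coercivity of $M_n$, one has $\RRe m_n \geq \mu I$ by hypothesis, while for any $p \in \ran(G)$,
\[
\RRe (\iota^*a_n\iota\, p, p)_{\ran(G)} = \RRe(a_n p, p)_{H_1} \geq \mu \|p\|^2,
\]
so $\iota^*a_n\iota$ is coercive with constant $\mu$; together with $\sup_n \|a_n\| < \infty$ this yields
\[
\RRe (\iota^*a_n\iota)^{-1} \geq \frac{\mu}{(\sup_n \|a_n\|)^2}\, I,
\]
hence $\RRe M_n \geq \lambda I$ uniformly for some $\lambda > 0$. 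The WOT convergence $M_n \to M$ is the direct product of the two hypothesised WOT convergences.

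It then remains to show $x_n \to x$ weakly in $\widetilde H$. Since $q_n \to q$ in $\BD(D) \hookrightarrow \dom(D)$, the first component satisfies $D q_n \to D q$ strongly in $H_0$, and moreover $\iota^* q_n \to \iota^* q$ strongly in $\ran(G)$. For the second component I split
\[
(\iota^*a_n\iota)^{-1}\iota^* q_n - (\iota^*a\iota)^{-1}\iota^* q
= (\iota^*a_n\iota)^{-1}(\iota^* q_n - \iota^* q) + \bigl[(\iota^*a_n\iota)^{-1} - (\iota^*a\iota)^{-1}\bigr]\iota^* q;
\]
the first summand tends to $0$ strongly, since $(\iota^*a_n\iota)^{-1}$ is uniformly bounded (WOT convergence implies uniform boundedness) and $\iota^* q_n - \iota^* q \to 0$ strongly, while the second summand tends to $0$ weakly by the hypothesised WOT convergence applied to the fixed vector $\iota^* q$. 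Hence $x_n \to x$ weakly in $\widetilde H$.

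Lemma~\ref{l:con} now yields $(M_n + A)^{-1} x_n \to (M+A)^{-1} x$ weakly in $\dom(A) = \dom(G) \times \dom(\interior{D}\iota)$. Composing with the bounded linear map $\begin{pmatrix} \pi_{\BD(G)} & 0 \end{pmatrix} \colon \dom(G) \times \dom(\interior{D}\iota) \to \BD(G)$ (continuity of $\pi_{\BD(G)}$ on $\dom(G)$ preserves weak convergence), we obtain $\Lambda_n^{-1} q_n \to \Lambda^{-1} q$ weakly in $\BD(G)$, as required. The only real technical point is the decomposition in the previous paragraph; everything else is bookkeeping inside the framework already established by Lemma~\ref{l:refntd} and Lemma~\ref{l:con}.
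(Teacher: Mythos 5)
Your proposal is correct and follows essentially the same route as the paper: the same reduction via Lemma~\ref{l:refntd}\ref{l:refntd-4} to the first-order system on $\widetilde H = H_0 \times \ran(G)$, the same operators $A$, $M_n$, $M$, and the same application of Lemma~\ref{l:con}. The only difference is that you spell out in detail the weak convergence of the right-hand side vectors $x_n$, which the paper dismisses as obvious.
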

\begin{proof}
Choose $\widetilde H = H_0 \times \ran(G)$ and let 
$A = \begin{pmatrix} 0 & -\interior{D}\iota \\ -\iota^*G & 0 \end{pmatrix}$
with $\dom(A) = \dom(G) \times \big( \ran(G) \cap \dom(\interior{D}) \big)$.
Then $A$ is skew-adjoint in $\widetilde H$ by Lemma~\ref{l:refntd}\ref{l:refntd-1}.
Moreover, the inclusion $\dom(A) \subset \widetilde H$ is compact by 
Lemma~\ref{l:refntd}\ref{l:refntd-1.5} and the compactness assumption.
Further let
\[
M = \begin{pmatrix} m & 0 \\ 0 & (\iota^*a\iota)^{-1}\end{pmatrix}
\quad \mbox{and} \quad
M_n = \begin{pmatrix} m_n & 0 \\ 0 & (\iota^*a_n\iota)^{-1}\end{pmatrix}
\]
for all $n \in \Ni$.
Then $\lim M_n = M$ in the weak operator topology on $\cl(\widetilde H)$.
Since 
\[
\RRe (\iota^* a_n \iota)^{-1} 
\geq \|\iota^* a_n \iota\|_{\cl(\ran(G))}^{-2} \RRe (\iota^* a_n \iota)
\geq \|a_n\|_{\cl(H_1)}^{-2} \RRe (\iota^* a_n \iota)
\]
for all $n \in \Ni$ and $\sup_n \|a_n\|_{\cl(H_1)} < \infty$,
it follows that there exists a $\lambda > 0$ such that $\RRe M_n \geq \lambda I$ 
for all $n \in \Ni$.
We use Lemma~\ref{l:refntd}\ref{l:refntd-4} for $\Lambda^{-1}$ and $\Lambda_n^{-1}$.
Obviously
\[
\lim (D q_n, -(\iota^*a_n\iota)^{-1}\iota^* q_n) = (D q, -(\iota^*a\iota)^{-1}\iota^* q)
\]
weakly in $\widetilde H$.
Hence 
\[
\lim_{n \to \infty} \begin{pmatrix} m_n & -\interior{D}\iota \\ -\iota^*G & (\iota^*a_n\iota)^{-1}\end{pmatrix}^{-1}
   \begin{pmatrix} D\\ -(\iota^*a_n\iota)^{-1}\iota^*\end{pmatrix} q_n
= \begin{pmatrix} m & -\interior{D}\iota \\ -\iota^*G & (\iota^*a\iota)^{-1}\end{pmatrix}^{-1}
   \begin{pmatrix} D\\ -(\iota^*a\iota)^{-1}\iota^*\end{pmatrix} q
\]
weakly in $\dom(A)$ by Lemma~\ref{l:con}.
Consequently $\lim \Lambda_n^{-1} q_n= \Lambda^{-1} q$
weakly in $\BD(G)$ by Lemma~\ref{l:refntd}\ref{l:refntd-4}.
\end{proof}

Now we are able to prove the main theorem of this section.

\begin{proof}[{\bf Proof of Theorem~\ref{tbdgd403}.}]
Let $\psi \in H$.
Then $\lim \Lambda_n^{-1} G \kappa^* \psi = \Lambda^{-1} G \kappa^* \psi$
weakly in $\BD(G)$ by Proposition~\ref{pbdgd406}.
Hence 
\[
\lim_{n \to \infty}  (\Lambda_H^{(n)})^{-1} \psi
= \lim_{n \to \infty} \kappa \Lambda_n^{-1} G \kappa^* \psi 
= \kappa \Lambda^{-1} G \kappa^* \psi
= \Lambda_H^{-1} \psi
\]
weakly in $H$.
This proves the first statement in Theorem~\ref{tbdgd403}.

Now suppose that $\kappa$ is compact.
Suppose $\lim (\Lambda_H^{(n)})^{-1} = \Lambda_H^{-1}$ in $\cl(H)$
is false.
Passing to a subsequence if necessary, there exist $\delta > 0$ 
and $\psi_1,\psi_2,\ldots \in H$ such that 
\begin{equation}
\|(\Lambda_H^{(n)})^{-1} \psi_n - \Lambda_H^{-1} \psi_n\|_H
> \delta \|\psi_n\|_H
\label{etbdgd403;1}
\end{equation}
for all $n \in \Ni$.
Without loss of generality we may assume that $\|\psi_n\|_H = 1$
for all $n \in \Ni$.
Passing again to a subsequence if necessary, there exists a $\psi \in H$ 
such that $\lim \psi_n = \psi$ weakly in $H$.
Then $\lim \kappa^* \psi_n = \kappa^* \psi$ in $\BD(G)$ since $\kappa$ is compact.
Therefore $\lim G \kappa^* \psi_n = G \kappa^* \psi$ in $\BD(D)$.
Hence $\lim \Lambda_n^{-1} G \kappa^* \psi_n = \Lambda^{-1} G \kappa^* \psi$
weakly in $\BD(G)$ by Proposition~\ref{pbdgd406}.
Using again that $\kappa$ is compact it follows that 
$\lim (\Lambda_H^{(n)})^{-1} \psi_n = (\Lambda_H)^{-1} \psi$ in $H$.
Similarly $\lim (\Lambda_H)^{-1} \psi_n = (\Lambda_H)^{-1} \psi$ in $H$.
So $\lim \|(\Lambda_H^{(n)})^{-1} \psi_n - \Lambda_H^{-1} \psi_n\|_H = 0$.
This contradicts (\ref{etbdgd403;1}) for large $n$.
\end{proof}

In Example~\ref{xbdgd625} and Proposition~\ref{pbdgd625} we show that 
in the setting of the classical example (Example~\ref{ex:classical})
$H$-convergence implies 
$\lim_{n \to \infty} (\iota^* a_n\iota)^{-1} = (\iota^* a\iota)^{-1}$
in the weak operator topology of $\cl(\ran(G))$.
Moreover, we compare it with a condition introduced in Section~\ref{Sbdgd6}.
In the real symmetric case we prove in Example~\ref{xbdgd630} that all are
actually equivalent.

\section{The non-coercive case} \label{Sbdgd5}

In this section, we drop the coerciveness condition on $m$.
As a result the Dirichlet-to-Neumann operator can become multi-valued,
that is, it is a graph and no longer an operator.
The Dirichlet-to-Neumann graph associated with the
Schr\"odinger operator $- \Delta + m$ has been studied in \cite{AEKS} and \cite{BeE1}.

Throughout this section we adopt the notation and assumptions as in the beginning of 
Section~\ref{Sbdgd2}.
Further we fix an element $m \in \cl(H_0)$ and a coercive $a \in \cl(H_1)$.
We emphasise that we do not require that $m$ is coercive.
The definition of the Dirichlet-to-Neumann \emph{graph}, however,
remains the same as in the single-valued case in Definition~\ref{dbdgd2-4.5}.

\begin{definition} \label{dbdgd501}
Set
\[
 \DtN 
= \{ (\pi_{\BD(G)}u,\pi_{\BD(D)} aGu)\in \BD(G)\times \BD(D) :  
    u\in \dom(DaG) \mbox{ and }mu-DaGu=0\}
.  \]
We call $\DtN$ the {\bf Dirichlet-to-Neumann graph associated with} $-DaG + m$.
\end{definition}

We briefly recall some definitions in the area of (linear) graphs.
Let $H,K$ be Hilbert spaces.
Then a {\bf graph} $A$ is a vector subspace of $H \times K$.
The {\bf domain}, {\bf multi-valued part} and {\bf inverse} of $A$ are defined by
\begin{eqnarray*}
\dom(A) & = & \{ h\in H : \mbox{there exists a } k\in K 
    \mbox{ such that } (h,k)\in A\}, \\ 
\mul(A) & = & \{ k\in K : (0,k)\in A\} \mbox{ and }  \\
A^{-1} & = & \{ (k,h) \in K \times H : (h,k) \in A \} 
.
\end{eqnarray*}
We say that $A$ is {\bf single-valued} or an {\bf operator} if $\mul(A) = \{ 0 \} $.
The next lemma is trivial.

\begin{lemma} \label{lbdgd510}
\mbox{}
\begin{tabel}
\item \label{lbdgd510-1}
$\mul(\Lambda) = \{ \pi_{\BD(D)} a G u : u \in \ker(m - D a \interior{G}) \} $.
\item \label{lbdgd510-2}
If $\ker(m - D a \interior{G}) = \{ 0 \} $, then $\Lambda$ is single-valued.
\end{tabel}
\end{lemma}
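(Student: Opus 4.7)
The plan is to simply unpack the definitions and exploit the orthogonal decomposition $\dom(G) = \BD(G) \oplus \dom(\interior{G})$ (which holds by construction of $\BD(G)$ as $\dom(\interior{G})^{\bot_{\dom(G)}}$). Since the lemma is flagged as trivial, the argument is essentially a bookkeeping exercise.

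For statement~\ref{lbdgd510-1}, I would start from Definition~\ref{dbdgd501}: an element $k \in \mul(\Lambda)$ means $(0,k) \in \Lambda$, i.e.\ there exists $u \in \dom(DaG)$ with $mu - DaGu = 0$, $\pi_{\BD(G)} u = 0$, and $k = \pi_{\BD(D)} aGu$. The key step is to observe that $\pi_{\BD(G)} u = 0$ is equivalent to $u \in \dom(\interior{G})$. For such $u$ we have $Gu = \interior{G} u$, so the requirement $u \in \dom(DaG)$ becomes $u \in \dom(D a \interior{G})$, and the equation $mu - DaGu = 0$ becomes $u \in \ker(m - D a \interior{G})$. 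Tracing these equivalences in both directions identifies $\mul(\Lambda)$ with the image of $\ker(m - D a \interior{G})$ under $u \mapsto \pi_{\BD(D)} aGu$, which is the claimed equality.

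Statement~\ref{lbdgd510-2} is then an immediate corollary: if $\ker(m - D a \interior{G}) = \{0\}$, then by~\ref{lbdgd510-1} we get $\mul(\Lambda) = \{\pi_{\BD(D)} a G 0\} = \{0\}$, so $\Lambda$ is single-valued by the definition of that notion recalled just before the lemma. There is no serious obstacle; the only point requiring a moment of care is to ensure that the orthogonal decomposition of $\dom(G)$ is invoked with respect to the graph norm (not the $H_0$ inner product), which is exactly how $\BD(G)$ was defined at the start of Section~\ref{Sbdgd2}.
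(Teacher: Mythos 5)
Your proposal is correct and is exactly the definitional unpacking the paper intends (it omits the proof as trivial): the only point of substance is the equivalence $\pi_{\BD(G)}u=0 \Leftrightarrow u\in\dom(\interior{G})$, which you rightly base on the orthogonal decomposition $\dom(G)=\BD(G)\oplus\dom(\interior{G})$ in the graph norm, valid because $\interior{G}\subset G$ is closed so $\dom(\interior{G})$ is a closed subspace of $\dom(G)$. Nothing further is needed.
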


As in Proposition~\ref{pbdgd303} define the sesquilinear form 
$\gotb \colon \dom(G) \times \dom(G) \to \Ci$ by
\[
\gotb(u,v)
= (aGu,Gv)_{H_1} + (mu,v)_{H_0}
.  \]
We also need the Dirichlet-version of $\gotb$ defined by 
$\interior{\gotb} = \gotb|_{\dom(\interior{G}) \times \dom(\interior{G})}$.
Then $\gotb$ and $\interior{\gotb}$ are continuous.
Hence there exist $T \in \cl(\dom(G))$ and $\interior{T} \in \cl(\dom(\interior{G}))$
such that 
$\gotb(u,v) = (T u, v)_{\dom(G)}$ for all $u,v \in \dom(G)$ and 
$\interior{\gotb}(u,v) = (\interior{T} u, v)_{\dom(\interior{G})}$ 
for all $u,v \in \dom(\interior{G})$.
Note that $\ker(\interior{T}) = \ker( m - D a \interior{G})$, since 
$(\interior{G})^* = -D$.

With a condition on $\ran(\interior{T})$ we can characterise the domain 
of the Dirichlet-to-Neumann graph~$\Lambda$.

\begin{prop} \label{pbdgd502}
Suppose that $\ran(\interior{T})$ is closed in $\dom(\interior{G})$.
Then 
\[
\dom(\Lambda) 
= \{ u_0 \in \BD(G) : (G u_0, \pi_{\BD(D)} a^* G v)_{\BD(D)} = 0 
                      \mbox{ for all } v \in \ker(m^* - D a^* \interior{G}) \}
.  \]
\end{prop}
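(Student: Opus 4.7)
The plan is to reformulate membership in $\dom(\Lambda)$ as a solvability question for the Dirichlet form operator $\interior{T}$ and then exploit the closed-range assumption to turn it into an orthogonality condition.

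First, I would unravel the definition. Fix $u_0 \in \BD(G)$. Then $u_0 \in \dom(\Lambda)$ iff there exists $w \in \dom(\interior{G})$ such that $u := u_0 + w$ satisfies $u \in \dom(DaG)$ and $mu - DaGu = 0$. Using $D = -(\interior{G})^*$, this PDE is equivalent to $\gotb(u,v) = 0$ for all $v \in \dom(\interior{G})$, i.e.\ $\interior{\gotb}(w,v) = -\gotb(u_0,v)|_{\dom(\interior{G})}$ for all $v \in \dom(\interior{G})$. Define $\Phi(u_0) \in \dom(\interior{G})$ by Riesz via $(\Phi(u_0),v)_{\dom(\interior{G})} = -\gotb(u_0,v)$ for $v \in \dom(\interior{G})$ (continuous in $v$ since $\gotb$ is continuous on $\dom(G)$). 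Then the existence of $w$ is equivalent to $\Phi(u_0) \in \ran(\interior{T})$.

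Next I would invoke the closed-range assumption: since $\ran(\interior{T})$ is closed in $\dom(\interior{G})$, we have $\ran(\interior{T}) = \ker(\interior{T}^*)^{\bot_{\dom(\interior{G})}}$. So $u_0 \in \dom(\Lambda)$ iff $\gotb(u_0,v) = 0$ for every $v \in \ker(\interior{T}^*)$. The next step is to identify $\ker(\interior{T}^*)$. Unwinding $(\interior{T}^* v,u)_{\dom(\interior{G})} = \overline{\interior{\gotb}(u,v)}$ on $u \in \dom(\interior{G})$, we get $v \in \ker(\interior{T}^*)$ iff $(\interior{G} u, a^* G v)_{H_1} + (u, m^* v)_{H_0} = 0$ for all $u \in \dom(\interior{G})$. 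Since $(\interior{G})^* = -D$ and $Gv = \interior{G}v$ for $v \in \dom(\interior{G})$, this is equivalent to $a^* \interior{G} v \in \dom(D)$ with $D a^* \interior{G} v = m^* v$, i.e.\ $\ker(\interior{T}^*) = \ker(m^* - D a^* \interior{G})$.

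The last and most delicate step is to rewrite the orthogonality condition $\gotb(u_0,v) = 0$ in the asserted form $(Gu_0, \pi_{\BD(D)} a^* G v)_{\BD(D)} = 0$. For $v \in \ker(m^* - D a^* \interior{G})$ we have $a^* Gv \in \dom(D)$, so $\pi_{\BD(D)} a^* Gv$ makes sense. Using $u_0 \in \BD(G)$ together with Lemma~\ref{l:bd} (giving $DGu_0 = u_0$) and $D a^* G v = m^* v$, I would compute
\begin{align*}
(Gu_0, \pi_{\BD(D)} a^* Gv)_{\BD(D)}
&= (Gu_0, a^* Gv)_{\dom(D)} \\
&= (Gu_0, a^* Gv)_{H_1} + (DGu_0, Da^* Gv)_{H_0} \\
&= (aGu_0, Gv)_{H_1} + (u_0, m^* v)_{H_0}
= \gotb(u_0,v).
\end{align*}
The first equality uses that the projection is orthogonal in $\dom(D)$ and that $Gu_0 \in \BD(D)$ (Corollary~\ref{cbdgd201}). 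Combining these three steps gives the claimed characterisation.

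The main obstacle is the orthogonality-to-pairing translation in the last step: one needs $a^* Gv \in \dom(D)$ (to form the projection), which is precisely the defining condition of $\ker(m^* - D a^* \interior{G})$, plus the identity $DGu_0 = u_0$ from $u_0 \in \BD(G)$. The closed-range hypothesis is used only once, to pass from $\Phi(u_0) \in \ran(\interior{T})$ to the orthogonality against $\ker(\interior{T}^*)$.
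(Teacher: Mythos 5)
Your proof is correct and follows essentially the same route as the paper: reduce membership in $\dom(\Lambda)$ to solvability of $\interior{T}w=$ (Riesz representative of $-\gotb(u_0,\cdot)|_{\dom(\interior{G})}$), use the closed-range identity $\ran(\interior{T})=\ker((\interior{T})^*)^{\perp_{\dom(\interior{G})}}$ with $\ker((\interior{T})^*)=\ker(m^*-Da^*\interior{G})$, and translate $\gotb(u_0,v)=0$ into $(Gu_0,\pi_{\BD(D)}a^*Gv)_{\BD(D)}=0$ via $DGu_0=u_0$ and $Da^*\interior{G}v=m^*v$. The only difference is presentational: your solvability formulation yields both inclusions in one equivalence chain, whereas the paper proves the inclusion $\subset$ explicitly and leaves the converse as similar.
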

\begin{proof}
`$\subset$'.
Let $u_0 \in \dom(\Lambda)$.
Then there exists a $u \in \dom(G)$ such that $mu - D a G u = 0$
and $u_0 = \pi_{BD(G)} u$.
Let $v \in \dom(\interior{G})$.
Then $(m u, v)_{H_0} = (DaGu,v)_{H_0} = - (a G u, \interior{G} v)_{H_1}$
and 
\begin{eqnarray*}
(\interior{T} (u-u_0), v)_{\dom(\interior{G})}
= \interior{\gotb}(u-u_0,v)  
& = & (a G(u-u_0), \interior{G} v)_{H_1} + (m (u-u_0), v)_{H_0} \\
& = & - (a G u_0, \interior{G} v)_{H_1} - (m u_0, v)_{H_0}
.  
\end{eqnarray*}
Note that $\interior{T} (u-u_0) \in \ran(\interior{T}) 
= ( \ker((\interior{T})^*) )^{\perp_{\dom(\interior{G})}}$
since $\ran( \interior{T} )$ is closed.

Now let $v \in \ker(m^* - D a^* \interior{G}) = \ker( (\interior{T})^* )$.
Then 
\begin{eqnarray*}
0 
& = & - (\interior{T} (u-u_0) ,v)_{\dom(\interior{G})}
= (a G u_0, \interior{G} v)_{H_1} + (m u_0, v)_{H_0} \\
& = & (G u_0, a^* \interior{G} v)_{H_1} + (u_0, m^* v)_{H_0} \\
& = & (G u_0, a^* \interior{G} v)_{H_1} + (D G u_0, D a^* \interior{G} v)_{H_0} \\
& = & (G u_0, a^* \interior{G} v)_{\dom(D)}  
= (G u_0, \pi_{\BD(D)} a^* \interior{G} v)_{\BD(D)}
\end{eqnarray*}
as required.

`$\supset$'.
The proof is similar and for this inclusion it is essential 
that $\ran( \interior{T} )$ is closed.
\end{proof}

\begin{cor} \label{cbdgd503}
Suppose that $\ran(\interior{T})$ is closed in $\dom(\interior{G})$.
Then 
\[
\dom(\Lambda) 
= \{ u_0 \in \BD(G) : \big( \Phi( \pi_{\BD(D)} a^* G v ) \big)(u_0) = 0
                      \mbox{ for all } v \in \ker(m^* - D a^* \interior{G}) \}
,  \]
where $\Phi \colon \BD(D) \to \BD(G)'$ is the natural unitary map as in 
Proposition~\ref{pbdgd202}.
\end{cor}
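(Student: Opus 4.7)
The plan is to derive Corollary~\ref{cbdgd503} as an immediate rewriting of Proposition~\ref{pbdgd502} using the explicit formula for the unitary map $\Phi$ from Proposition~\ref{pbdgd202}. The characterisation in Proposition~\ref{pbdgd502} says that $u_0 \in \BD(G)$ lies in $\dom(\Lambda)$ precisely when $(Gu_0, \pi_{\BD(D)} a^* G v)_{\BD(D)} = 0$ for every $v \in \ker(m^* - D a^* \interior{G})$. To obtain the statement of the corollary, I just need to recognise the inner product appearing there as the evaluation of $\Phi(\pi_{\BD(D)} a^* G v)$ on $u_0$.

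For this I would recall equation~(\ref{etbdgd202;1}) from the proof of Proposition~\ref{pbdgd202}, which gives, for every $q \in \BD(D)$ and $u \in \BD(G)$,
\[
\big( \Phi(q) \big)(u) = (q, \bull{G} u)_{\BD(D)} = (q, G u)_{\BD(D)}.
\]
Specialising to $q = \pi_{\BD(D)} a^* G v$ (which lies in $\BD(D)$ by construction) and $u = u_0$ yields
\[
\big( \Phi(\pi_{\BD(D)} a^* G v) \big)(u_0) = (\pi_{\BD(D)} a^* G v, G u_0)_{\BD(D)}.
\]
Since the right-hand side is the complex conjugate of $(Gu_0, \pi_{\BD(D)} a^* G v)_{\BD(D)}$, these two scalars vanish simultaneously. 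Combining this equivalence with Proposition~\ref{pbdgd502} gives exactly the claimed description of $\dom(\Lambda)$.

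No genuine obstacle is anticipated: the result is essentially a bookkeeping step translating the Hilbert-space orthogonality condition of Proposition~\ref{pbdgd502} through the Riesz-type identification $\Phi \colon \BD(D) \to \BD(G)'$. The only point requiring a small amount of care is that $\Phi$ is antilinear in its output, so one has to take a complex conjugate when passing between the two formulations; this is harmless for the vanishing condition used here.
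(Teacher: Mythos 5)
Your proposal is correct and is exactly the argument the paper intends: the corollary is stated without proof as an immediate consequence of Proposition~\ref{pbdgd502}, obtained by identifying $\big(\Phi(\pi_{\BD(D)} a^* G v)\big)(u_0) = (\pi_{\BD(D)} a^* G v, \bull{G} u_0)_{\BD(D)}$ via (\ref{etbdgd202;1}), which is the complex conjugate of the inner product appearing in the proposition. Your remark on the conjugation being harmless for the vanishing condition is the only point of care, and you handled it correctly.
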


We emphasise that boundary regularity is not needed in Corollary~\ref{cbdgd503}.

The next lemma gives an easy to verify condition which implies that 
$\interior{T}$ has closed range.

\begin{lemma} \label{lbdgd504}
If the inclusion $\tau \colon \dom(\interior{G}) \to H_0$ is compact, then 
$\interior{T}$ has closed range.
\end{lemma}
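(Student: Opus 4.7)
The strategy is to write $\interior{T}$ as the sum of a coercive (hence invertible) operator plus a compact operator, and then invoke Fredholm theory to conclude closedness of the range.

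First I would introduce the auxiliary continuous sesquilinear form $\interior{\gotb}_0 \colon \dom(\interior{G}) \times \dom(\interior{G}) \to \Ci$ defined by
\[
\interior{\gotb}_0(u,v) = (aGu,Gv)_{H_1} + (u,v)_{H_0},
\]
and let $S \in \cl(\dom(\interior{G}))$ be the operator such that $\interior{\gotb}_0(u,v) = (Su,v)_{\dom(\interior{G})}$ for all $u,v \in \dom(\interior{G})$. Since $a$ is coercive with some constant $\mu > 0$, one has
\[
\RRe (Su,u)_{\dom(\interior{G})}
= \RRe (aGu,Gu)_{H_1} + \|u\|_{H_0}^2
\geq \min(\mu,1) \, \|u\|_{\dom(\interior{G})}^2
\]
for all $u \in \dom(\interior{G})$. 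Hence $S$ is coercive on $\dom(\interior{G})$, and by Lax--Milgram it is invertible with bounded inverse.

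Next I would show that the difference $K := \interior{T} - S$ is compact on $\dom(\interior{G})$. By construction, for all $u,v \in \dom(\interior{G})$,
\[
(K u, v)_{\dom(\interior{G})}
= \bigl( (m - I_{H_0}) u, v \bigr)_{H_0}
= \bigl( (m - I_{H_0}) \tau u, \tau v \bigr)_{H_0},
\]
so $K = \tau^{*} (m - I_{H_0}) \tau$, where $\tau \colon \dom(\interior{G}) \to H_0$ is the embedding and $\tau^{*}$ denotes its Hilbert space adjoint. By hypothesis $\tau$ is compact, and hence so is $\tau^{*}$ by Schauder's theorem; therefore $K$ is compact as a composition of a compact operator with bounded operators.

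Finally, I would conclude by a standard Fredholm argument: write
\[
\interior{T} = S + K = S\bigl(I + S^{-1} K\bigr).
\]
Since $S^{-1} K$ is compact, $I + S^{-1} K$ is Fredholm of index zero on $\dom(\interior{G})$, so its range is closed. Applying the bounded invertible map $S$ preserves closedness, so $\ran(\interior{T}) = S\bigl(\ran(I + S^{-1}K)\bigr)$ is closed in $\dom(\interior{G})$, as required. There is no real obstacle here; the only thing to be careful about is identifying $\interior{T} - S$ with $\tau^{*} (m - I_{H_0}) \tau$ correctly so that the compactness of $\tau$ can be transferred to $K$.
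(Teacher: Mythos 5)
Your proof is correct and follows essentially the same route as the paper: there the decomposition is $\interior{T} = (\interior{T} + \omega\tau^*\tau) - \omega\tau^*\tau$ with $\omega$ chosen so that the first summand is coercive, while you take $S = \interior{T} + \tau^*(I_{H_0}-m)\tau$; in both cases $\interior{T}$ is a compact perturbation (factoring through the compact embedding $\tau$) of a coercive, hence invertible, operator, and Fredholm/Riesz theory gives the closed range.
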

\begin{proof}
There exist $\mu,\omega > 0$ such that 
$\mu \|u\|_{\dom(\interior{G})}^2 \leq \RRe \interior{\gotb}(u) + \omega \|\tau u\|_{H_0}^2$
for all $u \in \dom(\interior{G})$.
Then 
$\mu \|u\|_{\dom(\interior{G})}^2 
\leq \RRe (\interior{T} u, u)_{\dom(\interior{G})}
       + \omega (\tau^* \tau u, u)_{\dom(\interior{G})}
= \RRe ((\interior{T} + \omega \tau^* \tau) u, u)_{\dom(\interior{G})}$
for all $u \in \dom(\interior{G})$.
So $\interior{T} + \omega \tau^* \tau$ is injective and has closed range.
Similarly $(\interior{T})^* + \omega \tau^* \tau$ is injective.
So $\interior{T} + \omega \tau^* \tau$ is invertible.
Since $\omega \tau^* \tau$ is compact, the operator $\interior{T}$ is Fredholm.
In particular, the range of $\interior{T}$ is closed.
\end{proof}

Note that the operator $\tau$ is compact in the situation of Example~\ref{ex:classical}.

\begin{exam} \label{xbdgd511}
Let $\Omega \subset \Ri^d$ be a bounded Lipschitz domain with boundary $\Gamma$.
Let $G$ and~$D$ be as in Example~\ref{ex:classical}.
If $u_0 \in \BD(G)$, $v \in H_0$ and
$\Phi \colon \BD(D) \to \BD(G)'$ is the natural unitary map as in 
Proposition~\ref{pbdgd202}, then it follows from 
Example~\ref{xbdgd202.6} and Proposition~\ref{pbdgd202.5}\ref{pbdgd202.5-2} that 
\begin{eqnarray*}
\big( \Phi( \pi_{\BD(D)} a^* G v ) \big)(u_0)
& = & \langle (\nu \pi_{\BD(D)} a^* G v), \Tr u_0 \rangle_{(\Tr H^1(\Omega))' \times \Tr H^1(\Omega)}  \\
& = & \langle (\nu a^* G v), \Tr u_0 \rangle_{(\Tr H^1(\Omega))' \times \Tr H^1(\Omega)}  \\
& = & \langle (\partial_\nu^{a^*} v), \Tr u_0 \rangle_{H^{-1/2}(\partial\Omega),H^{1/2}(\partial\Omega)}
,  
\end{eqnarray*}
where $\partial_\nu^{a^*}$ is the co-normal derivative.
So Corollary~\ref{cbdgd503} gives
\[
\dom(\Lambda) 
= \{ u_0 \in \BD(G) : \langle (\partial_\nu^{a^*} v), 
                              \Tr u_0 \rangle_{H^{-1/2}(\partial\Omega),H^{1/2}(\partial\Omega)} = 0
                      \mbox{ for all } v \in \ker(m^* - D a^* \interior{G}) \}
,   \]
in agreement with \cite{McL} Proposition~4.10.
\end{exam}

Next we turn to the Neumann-to-Dirichlet graph.

\begin{prop} \label{:domNtD} 
Assume that $\rge(T)$ is closed in $\dom(G)$.
Then
\[
\dom(\Lambda^{-1})
= \{ q_0 \in \BD(D) : (Dq_0,\pi_{\BD(G)}v)_{\BD(G)}=0 
     \mbox{ for all } v\in \kar(m^*-\interior{D}a^*G) \}
.  \]
\end{prop}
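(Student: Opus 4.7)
The plan is to dualize the argument of Proposition~\ref{pbdgd502} by using the ``full'' form operator $T \in \cl(\dom(G))$ (defined through $\gotb(u,v) = (Tu,v)_{\dom(G)}$) in place of the Dirichlet form operator $\interior{T}$. First, I will recast membership of $q_0 \in \BD(D)$ in $\dom(\Lambda^{-1})$ as solvability of an inhomogeneous form equation. Specifically, defining the continuous antilinear functional $F \colon \dom(G) \to \Ci$ by
\[
F(v) = (Dq_0,v)_{H_0} + (q_0,Gv)_{H_1},
\]
I will show that $q_0 \in \dom(\Lambda^{-1})$ if and only if there exists $u \in \dom(G)$ with $\gotb(u,v) = F(v)$ for all $v \in \dom(G)$. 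The forward direction reduces, via $\interior{D} = -G^*$, to the computation $\gotb(u,v) - F(v) = (aGu - q_0, Gv)_{H_1} + (D(aGu - q_0), v)_{H_0} = 0$, using the hypothesis $aGu - q_0 \in \dom(\interior{D})$. The reverse direction extracts $DaGu = mu$ by testing against $v \in \dom(\interior{G})$ (the right-hand side of $F$ collapses because $(q_0,\interior{G}v)_{H_1} = -(Dq_0,v)_{H_0}$), and testing against arbitrary $v \in \dom(G)$ then forces $aGu - q_0 \in \dom(G^*) = \dom(\interior{D})$, so that $\pi_{\BD(D)}aGu = q_0$.

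Next I will rewrite $F$ in terms of the $\BD(G)$ inner product. Since $q_0 \in \BD(D)$, Corollary~\ref{cbdgd201} gives $Dq_0 \in \BD(G)$, and Lemma~\ref{l:bd} gives $GDq_0 = q_0$. Writing $v = \pi_{\BD(G)}v + (v - \pi_{\BD(G)}v)$ with $v - \pi_{\BD(G)}v \in \dom(\interior{G})$ and exploiting $\BD(G) = \dom(\interior{G})^{\perp_{\dom(G)}}$, one obtains
\[
F(v) = (Dq_0, v)_{\dom(G)} = (Dq_0, \pi_{\BD(G)}v)_{\BD(G)}.
\]

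In parallel I will identify $\ker(T^*)$ with the kernel appearing in the statement. Rewriting $\gotb(u,v) = (Gu, a^*Gv)_{H_1} + (u, m^*v)_{H_0}$, testing $v$ against $u \in \dom(\interior{G})$ yields $a^*Gv \in \dom(D)$ with $Da^*Gv = m^*v$, while testing against general $u \in \dom(G)$ upgrades this to $a^*Gv \in \dom(G^*) = \dom(\interior{D})$ and $\interior{D}a^*Gv = m^*v$; the reverse implication is immediate. Hence $\ker(T^*) = \ker(m^* - \interior{D}a^*G)$.

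With these three pieces in hand, the closed-range hypothesis seals the argument. Let $F^\sharp \in \dom(G)$ denote the Riesz representative of $F$. The form equation $\gotb(u,\cdot) = F$ is equivalent to $Tu = F^\sharp$, and by the assumption $\ran(T) = \ker(T^*)^{\perp_{\dom(G)}}$ this is solvable precisely when $F(v) = (F^\sharp, v)_{\dom(G)} = 0$ for every $v \in \ker(m^* - \interior{D}a^*G)$. Combined with the identity $F(v) = (Dq_0, \pi_{\BD(G)}v)_{\BD(G)}$, this is exactly the stated characterization of $\dom(\Lambda^{-1})$. The step I expect to be most delicate is the bookkeeping under dualization: one has to make sure that the ``Dirichlet'' condition lands inside $\interior{D}$ (not $\interior{G}$) when moving from $\Lambda$ to $\Lambda^{-1}$, so that the adjoint kernel produced by the full form $\gotb$ matches the asserted $\ker(m^* - \interior{D}a^*G)$ rather than the $\ker(m^* - Da^*\interior{G})$ appearing in Proposition~\ref{pbdgd502}.
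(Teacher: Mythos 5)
Your proposal is correct and follows essentially the same route as the paper: the paper's Lemma~\ref{l:solNtD} is exactly your equivalence between $q_0\in\dom(\Lambda^{-1})$ and solvability of $Tu=f_0$ (your $F^\sharp$ coincides with the paper's $f_0$, indeed with $Dq_0$), after which the paper likewise invokes $\ran(T)=\ker(T^*)^{\perp_{\dom(G)}}$ and the identification $\ker(T^*)=\ker(m^*-\interior{D}a^*G)$. The only cosmetic difference is that you define the functional via the pairing $(Dq_0,v)_{H_0}+(q_0,Gv)_{H_1}$ and then convert it to $(Dq_0,\pi_{\BD(G)}v)_{\BD(G)}$, whereas the paper starts from the latter expression.
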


Before we prove the latter proposition, we need a lemma.

\begin{lemma} \label{l:solNtD} 
Let $q_0\in \BD(D)$.
Let $f_0\in \dom(G)$ be such that 
\[
(f_0,v)_{\dom(G)} = (Dq_0,\pi_{\BD(G)}v)_{\BD(G)}
\]
for all $v\in\dom(G)$.
Let $u\in \dom(G)$.
Then the following statements are equivalent.
\begin{tabeleq}
\item  \label{l:solNtD-1} 
$Tu=f_0$.
\item \label{l:solNtD-2} 
$u\in \dom(DaG)$, $mu-DaGu=0$ and $q_0 = \pi_{\BD(D)}aGu$.
\end{tabeleq}
\end{lemma}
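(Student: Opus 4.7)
The plan is to prove both implications by unpacking the definition $Tu=f_0$ as the identity $\gotb(u,v)=(f_0,v)_{\dom(G)} = (Dq_0,\pi_{\BD(G)}v)_{\BD(G)}$ for all $v\in\dom(G)$, and exploiting the orthogonal decomposition $v = v_1 + v_2$ with $v_1=\pi_{\BD(G)}v\in\BD(G)$ and $v_2=v-v_1\in\dom(\interior{G})$. The two pieces will separately encode the PDE $mu-DaGu=0$ and the boundary identification $\pi_{\BD(D)}aGu=q_0$.

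For \ref{l:solNtD-2}$\Rightarrow$\ref{l:solNtD-1}, I would compute $\gotb(u,v_2)$ and $\gotb(u,v_1)$ separately. On the Dirichlet part, since $v_2\in\dom(\interior{G})$ and $aGu\in\dom(D)=\dom((\interior{G})^*)$ with $DaGu=mu$, the adjoint relation $(\interior{G})^*=-D$ gives $(aGu,\interior{G}v_2)_{H_1}+(mu,v_2)_{H_0}=0$, matching $(Dq_0,\pi_{\BD(G)}v_2)_{\BD(G)}=0$. On the boundary part $v_1\in\BD(G)$, Corollary~\ref{cbdgd201} and Lemma~\ref{l:bd} yield $Gv_1\in\BD(D)$ with $DGv_1=v_1$, so
\[
\gotb(u,v_1)=(aGu,Gv_1)_{H_1}+(DaGu,DGv_1)_{H_0}=(aGu,Gv_1)_{\dom(D)}=(\pi_{\BD(D)}aGu,Gv_1)_{\BD(D)},
\]
using that the residue $aGu-\pi_{\BD(D)}aGu$ lies in $\dom(\interior{D})=\BD(D)^{\bot_{\dom(D)}}$. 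Finally, Lemma~\ref{l:Gdot} rewrites this as $(\bull{D}q_0,v_1)_{\BD(G)}=(Dq_0,v_1)_{\BD(G)}$, which equals $(Dq_0,\pi_{\BD(G)}v)_{\BD(G)}$. Summing the two pieces yields $Tu=f_0$.

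For \ref{l:solNtD-1}$\Rightarrow$\ref{l:solNtD-2}, test first against $v\in\dom(\interior{G})$: then $\pi_{\BD(G)}v=0$ forces $(aGu,\interior{G}v)_{H_1}=-(mu,v)_{H_0}$ for all such $v$, so $aGu\in\dom((\interior{G})^*)=\dom(D)$ and $DaGu=mu$, giving $u\in\dom(DaG)$ and $mu-DaGu=0$. With this in hand, test against $v_1\in\BD(G)$; repeating the computation from the first direction now shows $(\bull{D}\pi_{\BD(D)}aGu,v_1)_{\BD(G)}=(Dq_0,v_1)_{\BD(G)}$ for all $v_1\in\BD(G)$. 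Since $\bull{D}$ is unitary from $\BD(D)$ onto $\BD(G)$ by Lemma~\ref{l:Gdot} and both $q_0$ and $\pi_{\BD(D)}aGu$ belong to $\BD(D)$, injectivity of $\bull{D}$ on $\BD(D)$ forces $\pi_{\BD(D)}aGu=q_0$.

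The computations are routine once the right decomposition is in place; the only potentially subtle step is the rewriting $(aGu,Gv_1)_{H_1}+(mu,v_1)_{H_0}=(aGu,Gv_1)_{\dom(D)}$, which requires both $aGu\in\dom(D)$ (a priori, not something we have in the forward direction until we have extracted it from the Dirichlet-test step) and $Gv_1\in\dom(D)$ with $DGv_1=v_1$ (provided by $v_1\in\BD(G)$). Organising the argument so that the Dirichlet-test conclusion is established \emph{before} the boundary-test manipulation is therefore the main structural point to watch.
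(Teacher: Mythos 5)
Your proposal is correct and follows essentially the same route as the paper: test $Tu=f_0$ separately against $\dom(\interior{G})$ (yielding $aGu\in\dom(D)$ and $DaGu=mu$ via $(\interior{G})^*=-D$) and against $\BD(G)$ (using $DGv=v$, $Gv\in\BD(D)$ and Lemma~\ref{l:Gdot} to identify $\pi_{\BD(D)}aGu$ with $q_0$ through injectivity of $\bull{D}$), then recombine by linearity. Your explicit orthogonal decomposition $v=v_1+v_2$ and the ordering caveat for the forward direction match the paper's argument exactly, so there is nothing to add.
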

\begin{proof}
`\ref{l:solNtD-1}$\Rightarrow$\ref{l:solNtD-2}'.
Let $v \in \dom(G)$.
Then 
\[
(mu,v)_{H_0}+(aGu,Gv)_{H_1}
= \gotb(u,v)
= (Tu,v)_{\dom(G)}
= (f_0,v)_{\dom(G)}
= (Dq_0,\pi_{\BD(G)}v)_{\BD(G)}
.  \]
Hence $(mu,v)_{H_0} + (aGu, \interior{G} v)_{H_1} = 0$ 
for all $v\in \dom(\interior{G})$.
So $a G u \in \dom( (\interior{G})^* ) = \dom(D)$ and 
$D a G u = - (\interior{G})^* a G u = m u$.
In particular, $u \in \dom(D a G)$.
Alternatively, if $v \in \BD(G)$, then 
\begin{eqnarray*}
(Dq_0, v)_{\BD(G)}
& = & (Dq_0,\pi_{\BD(G)}v)_{\BD(G)}
= (mu,v)_{H_0}+(aGu,Gv)_{H_1}  \\
& = & (D a G u , D G v)_{H_0}+(aGu,Gv)_{H_1}
= (a G u, G v)_{\dom(D)}  \\
& = & (\pi_{\BD(D)}aGu, G v)_{\BD(D)}
= (D \pi_{\BD(D)}aGu, v)_{\BD(G)}
\end{eqnarray*}
by Lemma~\ref{l:Gdot}.
So $q_0 = \pi_{\BD(D)}aGu$.

`\ref{l:solNtD-2}$\Rightarrow$\ref{l:solNtD-1}'.
Let $v \in \dom(\interior{G})$.
Since $(\interior{G})^* = - D$ one deduces that 
\begin{eqnarray*}
(Tu,v)_{\dom(G)}
& = & \gotb(u,v)
= (aGu,\interior{G}v)_{H_1} + (mu,v)_{H_0} \\
& = & - (DaGu, v)_{H_0} + (mu,v)_{H_0} 
= 0 
= (Dq_0,\pi_{\BD(G)}v)_{\BD(G)}
= (f_0,v)_{\dom(G)}
.
\end{eqnarray*}
Alternatively, if $v \in \BD(G)$, then 
\begin{eqnarray*}
(Tu,v)_{\dom(G)}
& = & \gotb(u,v)
= (aGu, Gv)_{H_1} + (mu,v)_{H_0} \\
& = & (aGu, Gv)_{H_1} + (D a G u, D G v)_{H_0} \\
& = & (a G u, G v)_{\dom(D)}
= (\pi_{\BD(D)} a G u, G v)_{\BD(D)}
= (q_0,Gv)_{\BD(D)}  \\
& = & (D q_0,v)_{\BD(G)}
= (f_0,v)_{\dom(G)}
.
\end{eqnarray*}
So by linearity $(Tu,v)_{\dom(G)} = (f_0,v)_{\dom(G)}$ for all $v \in \dom(G)$
and $Tu = f_0$.
\end{proof}

\begin{proof}[{\bf Proof of Proposition~\ref{:domNtD}.}]
Let $q_0 \in \BD(D)$.
Let $f_0 \in \dom(G)$ be as in Lemma~\ref{l:solNtD}.
Then it follows from Lemma~\ref{l:solNtD} that $q_0 \in \dom(\Lambda^{-1})$
if and only if $f_0 \in \ran(T)$.
But $\ran(T) = (\ker(T^*))^{\perp_{\dom(G)}}$ since $\ran(T)$ is closed in 
$\dom(G)$.
Now $\ker(T^*) = \ker(m^*-\interior{D}a^*G)$ because $G^* = - \interior{D}$.
Hence $f_0 \in \ran(T)$ if and only if 
$(Dq_0,\pi_{\BD(G)}v)_{\BD(G)}=0$ for all $v\in \kar(m^*-\interior{D}a^*G)$.
\end{proof}

As in Lemma~\ref{lbdgd504} one has the following sufficient condition
for the closedness of $\ran(T)$.

\begin{lemma} \label{lbdgd506}
If the inclusion $\dom(G) \subset H_0$ is compact,
then $\ran(T)$ is closed in $\dom(G)$.
\end{lemma}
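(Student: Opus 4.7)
The plan is to mimic the proof of Lemma~\ref{lbdgd504} almost verbatim, with $\interior{T}$ replaced by $T$, $\interior{\gotb}$ by $\gotb$, and the compact map $\tau \colon \dom(\interior{G}) \to H_0$ replaced by the compact inclusion, which I will call $\tau \colon \dom(G) \to H_0$. The starting point is a Gårding-type inequality for $\gotb$ on $\dom(G)$. Using the coercivity of $a$, say $\RRe(aq,q)_{H_1} \geq \mu_a\|q\|_{H_1}^2$, together with the mere boundedness of $m$, one has
\[
\RRe \gotb(u) \geq \mu_a \|Gu\|_{H_1}^2 - \|m\|_{\cl(H_0)}\|u\|_{H_0}^2
\]
for every $u\in \dom(G)$. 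Adding $\mu_a\|u\|_{H_0}^2$ to both sides and using $\|u\|_{\dom(G)}^2 = \|u\|_{H_0}^2 + \|Gu\|_{H_1}^2$, one obtains
\[
\mu_a \|u\|_{\dom(G)}^2 \leq \RRe \gotb(u) + \omega \|\tau u\|_{H_0}^2
\]
with $\omega = \mu_a + \|m\|_{\cl(H_0)}$.

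Rewriting via $\gotb(u,v) = (Tu,v)_{\dom(G)}$ and $(\tau^*\tau u,u)_{\dom(G)} = \|\tau u\|_{H_0}^2$ gives
\[
\mu_a\|u\|_{\dom(G)}^2 \leq \RRe\bigl((T+\omega\tau^*\tau)u,u\bigr)_{\dom(G)}
\]
for all $u\in \dom(G)$. Hence $T+\omega\tau^*\tau$ is bounded below and therefore injective with closed range. Since $\RRe(Au,u) = \RRe(A^*u,u)$, the same inequality applies to $T^* + \omega\tau^*\tau$, showing injectivity of the adjoint of $T+\omega\tau^*\tau$. Consequently $T+\omega\tau^*\tau$ is surjective, hence invertible.

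The final step: by the compactness of $\tau$, the operator $\omega\tau^*\tau\in \cl(\dom(G))$ is compact, so $T = (T+\omega\tau^*\tau) - \omega\tau^*\tau$ is a compact perturbation of an invertible operator and therefore Fredholm. In particular $\ran(T)$ is closed, as required. I do not expect any serious obstacle: the only point that needs a sentence of care is that the coercivity of $a$, which only controls $\|Gu\|_{H_1}$ rather than the full graph norm, is precisely what makes the compact inclusion $\dom(G)\hookrightarrow H_0$ indispensable for absorbing the $\|u\|_{H_0}^2$ term.
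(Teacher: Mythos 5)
Your proof is correct, and it is exactly the argument the paper intends: the paper gives no separate proof but simply remarks that the statement follows ``as in Lemma~\ref{lbdgd504}'', which is the G\r{a}rding-inequality-plus-Fredholm-perturbation argument you reproduce (with the explicit constant $\omega=\mu_a+\|m\|_{\cl(H_0)}$ spelled out). Nothing is missing.
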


In our model case Example~\ref{ex:classical}, the inclusion $\dom(G) \subset H_0$ is compact
if $\Omega$ has a continuous boundary.

We conclude with a variant of the Dirichlet-to-Neumann graph involving 
an intermediate space as in Section~\ref{Sbdgd3}.
Throughout the remainder of this section let $H$ be a Hilbert space and 
$\kappa \in \cl(\BD(G),H)$ injective with dense range.
Define 
\begin{eqnarray*}
\Lambda_H
= \{ (\varphi,\psi) \in H \times H 
& : & 
    \mbox{there exists a } u_0 \in \BD(G) \mbox{ such that }   \\*[0pt]
& & \kappa(u_0) = \varphi
    \mbox{ and } (u_0,G \kappa^* \psi) \in \Lambda \}
.  
\end{eqnarray*}
We call $\Lambda_H$ the {\bf Dirichlet-to-Neumann graph in $H$ associated with 
$- D a G + m$}.
It follows from Lemma~\ref{lbdgd510} that $\Lambda_H$ is single-valued if
$\ker(m - D a \interior{G}) = \{ 0 \} $.

The graph $\Lambda_H$ can be described with a form.

\begin{prop} \label{p:ADtNrel}
Define $j \colon \dom(G) \to H$ by 
$j = \kappa \circ \pi_{\BD(G)}$.
Then 
\begin{eqnarray*}
\Lambda_H 
= \{ (\varphi,\psi) \in H \times H 
& : & 
      \mbox{there exists a } u \in \dom(G) \mbox{ such that }  \\
& &          j(u) = \varphi \mbox{ and } \gotb(u,v) = (\psi, j(v))_{\dom(G)}
       \mbox{ for all } v \in \dom(G) \}
. 
\end{eqnarray*}
\end{prop}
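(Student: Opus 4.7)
My plan is to mimic the argument in Proposition~\ref{pbdgd303}, but adapted to the graph setting where $m$ need not be coercive, so Theorem~\ref{t:gen} is unavailable and the identification must be done directly from the definitions. The form computations themselves only rely on the identities $(\interior{G})^* = -D$ and on the unitarity/adjointness properties in Lemma~\ref{l:Gdot}, not on coercivity of $m$, so they transfer essentially verbatim. I will prove the two inclusions separately, in both cases using the decomposition $\dom(G) = \BD(G) \oplus \dom(\interior{G})$ to split test functions.

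For the inclusion \textbf{$\Lambda_H \subset$ (form-defined graph)}, I start from $(\varphi,\psi) \in \Lambda_H$ and unfold the definitions: there is $u_0 \in \BD(G)$ with $\kappa(u_0) = \varphi$ and $(u_0, G\kappa^*\psi) \in \Lambda$, and the latter by Definition~\ref{dbdgd501} provides $u \in \dom(DaG)$ with $mu - DaGu = 0$, $\pi_{\BD(G)}u = u_0$, and $\pi_{\BD(D)}(aGu) = G\kappa^*\psi$. Then $j(u) = \kappa\pi_{\BD(G)}u = \varphi$. Given a test $v \in \dom(G)$, split $v = v_1 + v_2$ with $v_1 \in \BD(G)$ and $v_2 \in \dom(\interior{G})$. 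On $v_2$, since $j(v_2) = 0$, the form vanishes: $\gotb(u,v_2) = (aGu, \interior{G}v_2)_{H_1} + (mu,v_2)_{H_0} = -(DaGu,v_2)_{H_0} + (mu,v_2)_{H_0} = 0$. On $v_1 \in \BD(G)$, I reproduce the chain used in Proposition~\ref{pbdgd303}: using $D G v_1 = v_1$, rewrite $\gotb(u,v_1) = (aGu, Gv_1)_{H_1} + (DaGu, DGv_1)_{H_0} = (aGu, Gv_1)_{\dom(D)} = (\pi_{\BD(D)}(aGu), \bull{G}v_1)_{\BD(D)}$, then invoke Lemma~\ref{l:Gdot} and $\pi_{\BD(D)}(aGu) = G\kappa^*\psi$ to collapse this to $(\kappa^*\psi, v_1)_{\BD(G)} = (\psi, \kappa(v_1))_H = (\psi, j(v_1))_H$. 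Adding the two pieces gives the required form identity.

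For the reverse inclusion, I start with $(\varphi,\psi)$ and $u \in \dom(G)$ as in the right-hand set. Testing the form identity against $v \in \dom(\interior{G})$ (so $j(v)=0$) yields $(aGu,\interior{G}v)_{H_1} + (mu,v)_{H_0} = 0$, which puts $aGu \in \dom((\interior{G})^*) = \dom(D)$ and gives $DaGu = mu$; in particular $u \in \dom(DaG)$. Set $u_0 := \pi_{\BD(G)}u$, so $\kappa(u_0) = j(u) = \varphi$. It remains to show $\pi_{\BD(D)}(aGu) = G\kappa^*\psi$, since that is the content of $(u_0, G\kappa^*\psi) \in \Lambda$. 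Testing against arbitrary $v \in \BD(G)$, on the one hand $(\psi, j(v))_H = (\kappa^*\psi, v)_{\BD(G)}$, and on the other hand the same manipulation as above (using $DaGu = mu$ and $DGv = v$) gives $\gotb(u,v) = (\pi_{\BD(D)}(aGu), \bull{G}v)_{\BD(D)} = (\bull{D}\pi_{\BD(D)}(aGu), v)_{\BD(G)} = (D\pi_{\BD(D)}(aGu), v)_{\BD(G)}$ by Lemma~\ref{l:Gdot}. Thus $\kappa^*\psi = D\pi_{\BD(D)}(aGu)$, and applying $\bull{G}$ together with $\bull{G}\bull{D} = I$ on $\BD(D)$ gives $G\kappa^*\psi = \pi_{\BD(D)}(aGu)$, as required.

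The only subtle point—and the one I would double-check carefully—is the handling of the $(mu,v)$ term on the $\BD(G)$ piece of $v$. Because $m$ is not assumed coercive (or even self-adjoint), I cannot afford to move $m$ around; the trick is that on $v \in \BD(G)$ one has $DGv = v$, so $(mu,v)_{H_0} = (DaGu, DGv)_{H_0}$, which is exactly what feeds the $\dom(D)$ inner product and lets Lemma~\ref{l:Gdot} convert everything into $\BD(D)$-pairings. Once this algebraic identity is secured, the proof is essentially the same bookkeeping as in Proposition~\ref{pbdgd303}, and no appeal to Theorem~\ref{t:gen} (and hence no coercivity of $m$) is needed.
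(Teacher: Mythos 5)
Your proof is correct and is essentially the paper's own argument: the paper simply says the statement "follows as in the proof of Proposition~\ref{pbdgd303}", and your two inclusions reproduce exactly that computation (splitting test functions via $\dom(G)=\BD(G)\oplus\dom(\interior{G})$, using $(\interior{G})^*=-D$, $DGv=v$ on $\BD(G)$ and Lemma~\ref{l:Gdot}), correctly adapted to the graph definitions of Section~\ref{Sbdgd5} and with the accurate observation that no coercivity of $m$ or appeal to Theorem~\ref{t:gen} is needed.
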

\begin{proof}
This follows as in the proof of Proposition~\ref{pbdgd303}.
\end{proof}

\begin{cor} \label{cbdgd512}
If $\ker(- D a \interior{G} + m) = \{ 0 \} $ and the inclusion 
$\dom(G) \subset H_0$ is compact, then $\Lambda_H$ is an m-sectorial operator.
\end{cor}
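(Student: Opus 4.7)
The plan is to represent $\Lambda_H$ via a reduced sesquilinear form on $\BD(G)$ and then invoke Theorem~\ref{t:gen} after a suitable real shift. First, since $\interior{G}\subset G$ induces an isometric embedding $\dom(\interior{G})\hookrightarrow\dom(G)$, the assumed compactness of $\dom(G)\hookrightarrow H_0$ transfers to $\dom(\interior{G})\hookrightarrow H_0$. The Gårding argument in the proof of Lemma~\ref{lbdgd504} then makes $\interior{T}$ a Fredholm operator of index zero (a compact perturbation of an invertible operator); together with the kernel hypothesis $\ker(\interior{T})=\ker(-Da\interior{G}+m)=\{0\}$, this upgrades $\interior{T}$ to a bounded bijection on $\dom(\interior{G})$. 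In particular, Lemma~\ref{lbdgd510}\ref{lbdgd510-2} already gives that $\Lambda_H$ is single-valued.

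Invertibility of $\interior{T}$ permits a Dirichlet lifting: for each $u_0\in\BD(G)$, let $Lu_0\in\dom(\interior{G})$ be the unique solution of $\interior{\gotb}(Lu_0,v)=-\gotb(u_0,v)$ for all $v\in\dom(\interior{G})$, so that $L\colon\BD(G)\to\dom(\interior{G})$ is bounded. With $Su_0=u_0+Lu_0\in\dom(G)$, I introduce the continuous sesquilinear form $\tilde{\gotb}(u_0,v_0)=\gotb(Su_0,v_0)$ on $\BD(G)\times\BD(G)$. Combining Proposition~\ref{p:ADtNrel} with the orthogonal decomposition $\dom(G)=\BD(G)\oplus\dom(\interior{G})$, a short verification shows that $(\kappa u_0,\psi)\in\Lambda_H$ if and only if $\tilde{\gotb}(u_0,v_0)=(\psi,\kappa v_0)_H$ for every $v_0\in\BD(G)$: the defining equation $\gotb(u,v)=(\psi,j(v))_H$ from Proposition~\ref{p:ADtNrel} splits into its $\dom(\interior{G})$-component, which is automatic by the definition of $L$, and its $\BD(G)$-component, which is precisely the reduced equation.

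The main obstacle is to show that $\tilde{\gotb}_\gamma(u_0,v_0):=\tilde{\gotb}(u_0,v_0)+\gamma(\kappa u_0,\kappa v_0)_H$ is coercive on $\BD(G)$ for some sufficiently large $\gamma>0$. I would argue by contradiction: if the claim fails for every $\gamma=n\in\N$, there exist $u_0^{(n)}\in\BD(G)$ with $\|u_0^{(n)}\|_{\BD(G)}=1$ and $\RRe\tilde{\gotb}_n(u_0^{(n)})<1/n$. Continuity of $\gotb$ and boundedness of $S$ yield $n\|\kappa u_0^{(n)}\|_H^2\le 1/n+C$, so $\kappa u_0^{(n)}\to 0$ in $H$. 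Along a weakly convergent subsequence $u_0^{(n)}\rightharpoonup u_0^{\ast}$ in $\BD(G)$, continuity of $\kappa$ forces $\kappa u_0^{\ast}=0$, and injectivity of $\kappa$ gives $u_0^{\ast}=0$. Compactness of $\dom(G)\hookrightarrow H_0$ and boundedness of $L$ then imply $u_0^{(n)}\to 0$ and $Lu_0^{(n)}\to 0$ in $H_0$, so $Su_0^{(n)}\to 0$ in $H_0$. Combining the Gårding lower bound $\RRe\gotb(w)\ge c\|Gw\|_{H_1}^2-C'\|w\|_{H_0}^2$ (valid by coercivity of $a$ and boundedness of $m$) with $\RRe\gotb(Su_0^{(n)})<1/n$ gives $\|GSu_0^{(n)}\|_{H_1}\to 0$, whence $\|Su_0^{(n)}\|_{\dom(G)}\to 0$, and finally $\|u_0^{(n)}\|_{\BD(G)}=\|\pi_{\BD(G)}Su_0^{(n)}\|_{\BD(G)}\to 0$, contradicting the normalisation.

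With $\gamma$ so chosen, Theorem~\ref{t:gen} applied to the data $(\BD(G),H,\kappa,\tilde{\gotb}_\gamma)$ produces an m-sectorial operator in $H$; by the equivalence in the second paragraph this operator equals $\Lambda_H+\gamma I$, and hence $\Lambda_H$ is m-sectorial. The delicate step is the coercivity argument, which crucially weaves together the compactness hypothesis, the invertibility of $\interior{T}$, and the injectivity of $\kappa$; without the boundary reduction one cannot circumvent the lack of coercivity of $\gotb$ itself on all of $\dom(G)$.
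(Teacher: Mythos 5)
Your proof is correct, but it takes a genuinely different route from the paper. The paper's own proof is a two-line reduction: with $j=\kappa\circ\pi_{\BD(G)}$ it notes that $V(\gotb)\cap\ker j=\ker(-Da\interior{G}+m)=\{0\}$ and then invokes the generation theorem for non-coercive forms with hidden compactness, namely \cite{ACSVV} Theorem~8.11, together with Proposition~\ref{p:ADtNrel}; all of the compactness bookkeeping is delegated to that reference. You instead rebuild the result from ingredients internal to the paper: the Fredholm argument of Lemma~\ref{lbdgd504} (applicable because $\dom(\interior{G})\hookrightarrow\dom(G)\hookrightarrow H_0$ is compact) combined with the kernel hypothesis makes $\interior{T}$ boundedly invertible (note $V(\gotb)\cap\ker j=\ker(\interior{T})$, so under the compactness assumption your invertibility statement is exactly the paper's hypothesis in disguise); this gives the Dirichlet lifting $L$ and a Schur-complement-type reduction of $\gotb$ to the form $\tilde{\gotb}$ on $\BD(G)$, which you correctly identify with $\Lambda_H$ via Proposition~\ref{p:ADtNrel}, the decomposition $\dom(G)=\BD(G)\oplus\dom(\interior{G})$ and the injectivity of $\kappa$ and of $\interior{T}$; the contradiction argument then yields coercivity of $\tilde{\gotb}_\gamma$ for large $\gamma$, and Theorem~\ref{t:gen} applied to $(\tilde{\gotb}_\gamma,\kappa)$ gives m-sectoriality of $\Lambda_H+\gamma I$, hence of $\Lambda_H$, since m-sectoriality is stable under real shifts. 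The one step you leave implicit is the inequality $\RRe\gotb(Su_0^{(n)})<1/n$: it rests on the identity $\gotb(Su_0,Su_0)=\tilde{\gotb}(u_0,u_0)$, which holds because $\gotb(Su_0,v)=0$ for every $v\in\dom(\interior{G})$ (in particular $v=Lu_0$) by the definition of $L$; this deserves a sentence but is not a gap. As for what each approach buys: the paper's argument is shorter and leans on general machinery, whereas yours is self-contained modulo Theorem~\ref{t:gen}, makes explicit where the kernel condition and the compact embedding enter (invertibility of the Dirichlet operator, respectively the shifted coercivity of the boundary form), exhibits $\Lambda_H+\gamma I$ concretely as the operator of a coercive form on the boundary space, and incidentally re-proves single-valuedness, so your appeal to Lemma~\ref{lbdgd510} is not even needed.
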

\begin{proof}
Let $j = \kappa \circ \pi_{\BD(G)} \colon \dom(G) \to H$
and let $V(\gotb) = \{ u \in \dom(G) : \gotb(u,v) = 0 \mbox{ for all } v \in \ker j \} $.
Then $V(\gotb) \cap \ker j = \ker(- D a \interior{G} + m) = \{ 0 \} $.
Then the statement follows from \cite{ACSVV} Theorem~8.11 and Proposition~\ref{p:ADtNrel}.
\end{proof}

Even if the inclusion $\dom(G) \subset H_0$ is compact, then in general 
$\Lambda_H$ is not an m-sectorial graph. 
A counterexample has been given in \cite{BeE2} Example~3.7.

\section{Resolvent convergence, non-coercive case} \label{Sbdgd6}

In this section we consider resolvent convergence of a sequence of Dirichlet-to-Neumann 
operators without the coercivity condition on~$m$.
Throughout this section, we adopt the notation and assumptions as in the beginning of 
Section~\ref{Sbdgd2}.
Let $H$ be a Hilbert space and let
$\kappa \in \cl(\BD(G),H)$ be one-to-one with dense range.
Set $j = \kappa \circ \pi_{\BD(G)} \colon \dom(G) \to H$.

We need a stronger version of convergence for the leading coefficients,
which we next introduce.
Let $a,a_1,a_2,\ldots \in \cl(H_1)$ be coercive.
We say that {\bf $(a_n)_{n \in \Ni}$ converges to $a$ independent 
of the boundary conditions} if for every strictly increasing 
sequence $(n_k)_{k \in \Ni}$ in $\Ni$, all $f,f_1,f_2,\ldots \in H_0$
and all $u,u_1,u_2,\ldots \in \dom(G)$ with 
\begin{align}\label{eq:convIndepBd}
\left[ \begin{array}{l}
   \displaystyle \lim_{k \to \infty} f_k = f \mbox{ weakly in } H_0 ,  \\[5pt]
   \displaystyle \lim_{k \to \infty} u_k = u \mbox{ weakly in } \dom(G) , \mbox{ and}  \\[5pt]
   \displaystyle u_k \in \dom(D a_{n_k} G)
            \mbox{ and } 
                - D a_{n_k} G u_k = f_k
            \mbox{ for all } k \in \Ni 
        \end{array} \right.
\end{align}
it follows that $\lim\limits_{k \to \infty} a_{n_k} G u_k = a G u$ 
weakly in $H_1$.

Note that $D$ is weakly closed and 
$\lim_{k \to \infty} D(a_{n_k} G u_k) = \lim_{k \to \infty} - f_k = -f$
weakly in $H_0$.
So $a G u \in \dom(D)$ and $-D a G u = f$.
In particular $u \in \dom(D a G)$.

\begin{exam} \label{xbdgd625}
In this example we show that in the classical situation, convergence of the coefficients 
independent of the boundary conditions is equivalent to the already studied notion of 
$H$-convergence, see \cite{Tartar} and \cite{MuratTartar}. 

Let $\Omega \subset \Ri^d$ be open and bounded.
Further, let $H_0$, $H_1$, $G$ and $D$ be as in Example~\ref{ex:classical}.
We identify an element of $L_\infty(\Omega,\Ci^{d \times d})$ with an
element of $\cl(H_1)$ in the natural way.
Let $a,a_1,a_2,\ldots\in L_\infty(\Omega,\Ri^{d \times d})$.
Note that we require that the matrices are real valued, but they do not have 
to be symmetric.
Suppose that $\RRe a_n \geq \mu I$ for all $n \in \Ni$, 
$\RRe a \geq \mu I$ and $\sup_n \|a_n\|_{\cl(H_1)} < \infty$.

Recall that the sequence $(a_n)_{n\in\Ni}$ is called 
$H$-convergent to $a$, if for all 
$f\in H^{-1}(\Omega)$ and for all $n\in \Ni$ with $u_n\in H_0^1(\Omega)$ satisfying
\[
   (a_n \grad u_n ,\grad v)_{L_2(\Omega)^d}=f(v)
\]
for all $v\in H_0^1(\Omega)$, it follows that $\lim_{n\to\infty} u_n = u$ weakly in 
$H_0^1(\Omega)$ and $\lim_{n\to \infty} a_n \grad u_n = a\grad u$ weakly in  
$L_2(\Omega)^d$, where $u \in H^1_0(\Omega)$ is such that 
\[
   (a\grad u,\grad v)_{L_2(\Omega)^d}=f(v)
\]
for all $v \in H^1_0(\Omega)$.

Suppose that the sequence $(a_n)_{n\in\Ni}$ is $H$-convergent to $a$.
We show that $(a_n)_{n\in\Ni}$ converges to $a$ independent of the boundary conditions.
Let $f,f_1,f_2,\ldots \in L_2(\Omega)$,
$u,u_1,u_2,\ldots \in H^1(\Omega)$ and $(n_k)_{k\in\Ni}$ satisfy \eqref{eq:convIndepBd}.
Then every subsequence 
$(a_{n_k})_{k\in\Ni}$ is $H$-convergent to $a$ by the
discussion after Definition~6.4 in \cite{Tartar}.
So without loss of generality we may assume that $n_k=k$ for all $k\in\Ni$.
As $(u_k)_{k\in\Ni}$ converges to $u$ weakly in $H^1(\Omega)$ it also converges weakly 
in $H^1_\loc(\Omega)$.
The inclusion $H^1_0(\Omega)\subset L_2(\Omega)$ is compact since 
$\Omega$ is bounded.
Hence also the inclusion 
$L_2(\Omega)\subset (H^1_0(\Omega))'=H^{-1}(\Omega)$ is compact.
Therefore $(f_k)_{k\in\Ni}$ converges strongly to $f$ in 
$H^{-1}(\Omega)\subset H^{-1}_\loc(\Omega)$.  
Then the criteria of Lemma~10.3 in \cite{Tartar} are fulfilled and we obtain that 
$(a_kGu_k)_{k\in\Ni}$ converges weakly to $aGu$ in $L_{2,\loc}(\Omega)^d$.
Since the sequence $(a_kGu_k)_{k\in\Ni}$ in $L_2(\Omega)^d$ is bounded in 
$L_2(\Omega)^d$, there exists a $q\in L_2(\Omega)^d$ and a subsequence of 
$(a_kGu_k)_{k\in\Ni}$ that weakly converges to $q$ in $L_2(\Omega)^d$.
By uniqueness of limits in $L_{2,\loc}(\Omega)^d$, we must have that $q= aGu$.
So the subsequence converges to $aGu$ in $L_2(\Omega)^d$.
Using the standard subsequence argument we 
deduce that $(a_kGu_k)_{k\in\Ni}$ converges weakly to $aGu$ in $L_2(\Omega)^d=H_1$.

Conversely, suppose that $(a_n)_{n\in\Ni}$ converges to $a$ independent of the 
boundary conditions. 
We shall prove that the sequence $(a_n)_{n\in\Ni}$ is $H$-convergent to $a$.
Let $f\in  H^{-1}(\Omega)$.
For all $n\in \Ni$ let  $u_n, u\in H_0^1(\Omega)$ satisfy
\[
   (a_n \grad u_n,\grad v)_{L_2(\Omega)^d}=f(v)
\quad \mbox{and} \quad
   (a \grad u,\grad v)_{L_2(\Omega)^d}=f(v)
\]
for all $v\in H_0^1(\Omega)$. 
We need to show that $\lim u_n = u$ weakly in $H_0^1(\Omega)$ and 
$\lim a_n\grad u_n = a\grad u$ weakly in $L_2(\Omega)^d$.
Since $L_2(\Omega)$ is dense in $H^{-1}(\Omega)$, there exists
a sequence $(f_\ell)_{\ell \in \Ni}$ in 
$L_2(\Omega)$ such that $\lim_{\ell\to\infty} f_\ell = f$ in $H^{-1}(\Omega)$.
For all $n,\ell \in \Ni$ let $u_n^\ell,u^\ell \in H_0^1(\Omega)$ be such that
\[
(a_n \grad u_n^\ell ,\grad v)_{L_2(\Omega)^d}
= (f_\ell,v)_{L_2(\Omega)}
\quad \mbox{and} \quad
 (a \grad u^\ell , \grad v)_{L_2(\Omega)^d} = (f_\ell,v)_{L_2(\Omega)}
\]
for all $v \in H^1_0(\Omega)$.

Let $\ell \in \Ni$.
We shall show that $\lim_{n \to \infty} u_n^\ell = u^\ell$
weakly in $H_0^1(\Omega)$ and 
$\lim_{n \to \infty} a_n \grad u_n^\ell = a \grad u^\ell$
weakly in $L_2(\Omega)^d$.
Note that the sequence $(u^\ell_n)_{n \in \Ni}$ is bounded in $H^1_0(\Omega)$.
Choose $(n_k)_{k \in \Ni}$ to be 
a strictly increasing sequence of natural numbers
such that $(u_{n_k}^\ell)_{k \in \Ni}$ weakly converges in $H_0^1(\Omega)$,
say to $w^\ell \in H^1_0(\Omega)$.
Since the sequence $(a_n)_{n \in \Ni}$ converges to $a$ independent of the 
boundary conditions, 
one deduces that 
$\lim_{k \to \infty} a_{n_k} \grad u_{n_k}^\ell = a\grad w^\ell$
weakly in $L_2(\Omega)^d$ and 
\[
 (a \grad w^\ell , \grad v)_{L_2(\Omega)^d} = (f_\ell,v)_{L_2(\Omega)}
\]
for all $v\in H_0^1(\Omega)$.
Uniqueness of $u^\ell$ implies $w^\ell=u^\ell$.
Hence $\lim_{n \to \infty} u_n^\ell = u^\ell$ weakly in $H_0^1(\Omega)$
by a subsubsequence argument.
Since the sequence $(a_n)_{n \in \Ni}$ converges to $a$ independent of the 
boundary conditions, we obtain 
$ \lim_{n \to \infty} a_n \grad u_n^\ell = a\grad u^\ell$ weakly in $L_2(\Omega)^d$.

By the Dirichlet-type Poincar\'e inequality there exists a $c_0 > 0$
such that $\|v\|_{L_2(\Omega)} \leq c_0 \|\grad v\|_{L_2(\Omega)^d}$
for all $v \in H^1_0(\Omega)$.
If $n,\ell \in \Ni$, then 
\begin{eqnarray*}
\mu \|\grad(u_n^\ell - u_n)\|_{L_2(\Omega)^d}^2
& \leq & \RRe (a_n \grad (u_n^\ell - u_n),\grad (u_n^\ell - u_n))_{L_2(\Omega)^d}  \\
& = & \RRe (f_\ell - f)(u_n^\ell - u_n)  \\
& \leq & \|f_\ell - f\|_{H^{-1}(\Omega)} \|u_n^\ell - u_n\|_{H_0^1(\Omega)}  \\
& \leq & (1 + c_0) \|f_\ell - f\|_{H^{-1}(\Omega)} \|\grad(u_n^\ell - u_n)\|_{L_2(\Omega)^d}
.
\end{eqnarray*}
Therefore 
\[
\|u_n^\ell - u_n\|_{H_0^1(\Omega)}
\leq (1 + c_0) \|\grad(u_n^\ell - u_n)\|_{L_2(\Omega)^d}
\leq (1 + c_0)^2 \mu^{-1} \|f_\ell - f\|_{H^{-1}(\Omega)}
.  \]
Similarly 
$\|u^\ell - u\|_{H_0^1(\Omega)} \leq (1 + c_0)^2 \mu^{-1} \|f_\ell - f\|_{H^{-1}(\Omega)}$.
If $n,\ell \in \Ni$, then 
\[
\|a_n \grad u_n^\ell - a_n \grad u_n\|_{L_2(\Omega)^d} 
\leq c \|u_n^\ell - u_n\|_{H^1_0(\Omega)} 
\leq c (1 + c_0)^2 \mu^{-1} \|f_\ell - f\|_{H^{-1}(\Omega)}
, \]
where $c = \|a\|_{\cl(H_1)} \vee \sup_{k \in \Ni} \|a_k\|_{\cl(H_1)}$.
Similarly one deduces 
$\|a \grad u^\ell - a \grad u\|_{L_2(\Omega)^d} 
\leq c (1 + c_0)^2 \mu^{-1} \|f_\ell - f\|_{H^{-1}(\Omega)}$.
Now let $v \in H^1_0(\Omega)$.
Then 
\begin{eqnarray*}
| (u_n - u,v)_{H_0^1(\Omega)}|  
& \leq & | (u_n - u_n^\ell,v)_{H_0^1(\Omega)}|
   + | (u_n^\ell - u^\ell,v)_{H_0^1(\Omega)}| 
   +  | (u^\ell - u,v)_{H_0^1(\Omega)}| \\
& \leq & (1 + c_0)^2 \mu^{-1} \| f-f_\ell\|_{H^{-1}(\Omega)} \|v\|_{H_0^1(\Omega)}
   + |(u_n^\ell - u^\ell,v)_{H_0^1(\Omega)}|   \\*
& & \hspace*{40mm} {}
   + (1 + c_0)^2 \mu^{-1} \| f-f_\ell\|_{H^{-1}(\Omega)} \|v\|_{H_0^1(\Omega)}
\end{eqnarray*}
for all $\ell, n \in \Ni$,
which yields $\lim_{n \to \infty} u_n = u$ weakly in $H_0^1(\Omega)$.
It follows similarly that $\lim_{n \to \infty} a_n\grad u_n = a\grad u$ 
weakly in $L_2(\Omega)^d$.
Hence the sequence $(a_n)_{n\in\Ni}$ is $H$-convergent to $a$.
\end{exam}

The condition
$(a_n)_{n \in \Ni}$ converges to $a$ independent 
of the boundary conditions, which we use in this section, is 
stronger than the condition used for the convergence
in Theorem~\ref{tbdgd403}.

\begin{prop} \label{pbdgd625}
Let $a,a_1,a_2,\ldots \in \cl(H_1)$ and 
$\mu > 0$.
Suppose that $\RRe a_n \geq \mu I$ for all $n \in \Ni$ and 
$\RRe a \geq \mu I$.
Suppose that $(a_n)_{n \in \Ni}$ converges to $a$ independent 
of the boundary conditions.
Further assume that the inclusion $\dom(G) \subset H_0$ is compact.
Let $\iota \colon \rge(G) \hookrightarrow H_1$ be the embedding map.
Then $\lim_{n \to \infty} (\iota^* a_n \iota)^{-1} = (\iota^* a\iota)^{-1}$
in the weak operator topology on $\cl(\ran(G))$.
\end{prop}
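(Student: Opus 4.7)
The plan is to reduce the claim to weak convergence on a dense subset of $\ran(G)$ by exploiting uniform boundedness, and then to extract the desired limit from the hypothesis via a standard subsequence argument. First I would note that for every $p \in \ran(G)$ one has $\RRe (\iota^* a_n \iota \, p, p)_{\ran(G)} = \RRe (a_n p, p)_{H_1} \geq \mu \|p\|_{H_1}^2$, so $\|(\iota^* a_n \iota)^{-1}\|_{\cl(\ran(G))} \leq \mu^{-1}$ uniformly in $n$. Consequently it suffices to show that $(\iota^* a_n \iota)^{-1} q \to (\iota^* a \iota)^{-1} q$ weakly in $\ran(G)$ for every $q$ in the dense subset $\ran(G) \cap \dom(\interior{D})$ (density coming from the argument given in the proof of Lemma~\ref{l:refntd}\ref{l:refntd-1}).

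Fix such a $q$, set $p_n = (\iota^* a_n \iota)^{-1} q \in \ran(G)$, and pick the unique $u_n \in \dom(G) \cap \ker(G)^{\perp_{H_0}}$ with $G u_n = p_n$. The key observation is that $\iota^* a_n p_n = q$ forces
\[
a_n p_n - q \in \ran(G)^{\perp_{H_1}} = \ker(G^*) = \ker(\interior{D}) \subset \dom(\interior{D}),
\]
so $a_n p_n \in \dom(\interior{D}) \subset \dom(D)$ (using here that $q \in \dom(\interior{D})$) and
\[
D(a_n G u_n) = \interior{D}(a_n p_n - q) + \interior{D} q = \interior{D} q = D q.
\]
Thus $u_n \in \dom(D a_n G)$ and $-D a_n G u_n = -D q$ for every $n$; in particular the right-hand side is constant in $n$.

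The uniform bound gives $\|p_n\|_{H_1} \leq \mu^{-1} \|q\|$, and Lemma~\ref{lbdgd401}\ref{lbdgd401-1} yields $\|u_n\|_{H_0} \leq c \|p_n\|_{H_1}$, so $(u_n)$ is bounded in $\dom(G)$. Take an arbitrary subsequence; by weak compactness extract a further subsequence $(u_{n_k})$ converging weakly in $\dom(G)$ to some $u$. Setting $f_k = -Dq = f$, the triple $(u_{n_k}, f_k, a_{n_k})$ satisfies the hypotheses of \eqref{eq:convIndepBd}, so convergence independent of the boundary conditions gives $a_{n_k} G u_{n_k} \to a G u$ weakly in $H_1$. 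Applying the bounded map $\iota^*$ and using that $\iota^* a_{n_k} G u_{n_k} = q$ for every $k$, we conclude $\iota^* a G u = q$, i.e.\ $G u = (\iota^* a \iota)^{-1} q$. Since this limit is independent of the chosen subsequence, the full sequence $p_n = G u_n$ converges weakly in $\ran(G)$ to $(\iota^* a \iota)^{-1} q$ by the standard subsubsequence argument; extending from the dense subset via the uniform bound finishes the proof. The step I expect to require the most care is the identity $D(a_n G u_n) = Dq$, since it depends critically on the identification $\ran(G)^{\perp_{H_1}} = \ker(\interior{D})$ and on restricting $q$ to $\dom(\interior{D})$—this is exactly the reason the dense subset is chosen in this particular form.
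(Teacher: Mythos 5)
Your proposal is correct and follows essentially the same route as the paper's proof: restrict to the dense subspace $\ran(G)\cap\dom(\interior{D})$, lift $p_n=(\iota^* a_n\iota)^{-1}q$ to $u_n\in\dom(G)\cap\ker(G)^{\perp_{H_0}}$, note that $a_nGu_n\in\dom(\interior{D})$ with $-Da_nGu_n=-Dq$ constant in $n$, bound $(u_n)$ via Lemma~\ref{lbdgd401}\ref{lbdgd401-1}, and apply the convergence-independent-of-the-boundary-conditions hypothesis along a weakly convergent subsequence, finishing with the uniform bound $\mu^{-1}$ and a subsubsequence argument. The only deviation is the final identification $Gu=(\iota^* a\iota)^{-1}q$, which you obtain directly by applying the bounded map $\iota^*$ to the weak limit of the constant sequence $q$, whereas the paper goes through the closedness of $\interior{D}$, Lemma~\ref{l:refntd}\ref{l:refntd-0.5} and the injectivity of $\interior{D}\iota$ (Lemma~\ref{l:refntd}\ref{l:refntd-1.2}); your shortcut is valid and slightly more economical.
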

\begin{proof}
Let $q \in \ran G \cap \dom \interior{D}$.
Let $n \in \Ni$.
Write $r_n = (\iota^* a_n \iota)^{-1} q$.
Then $r_n \in \ran G$ and $\|r_n\|_{H_1} \leq \mu^{-1} \|q\|_{H_1}$.
There exists a $u_n \in \dom G \cap (\ker G)^{\perp_{H_0}}$
such that $G u_n = r_n$.
Then the sequence $(u_n)_{n \in \Ni}$ is bounded in $\dom G$ by 
Lemma~\ref{lbdgd401}\ref{lbdgd401-1}.
Passing to a subsequence if necessary, there exists a $u \in \dom G$
such that $\lim u_n = u$ weakly in $\dom G$.
Let $n \in \Ni$.
Then $q = \iota^* a_n \iota r_n = \iota^* a_n G u_n$.
Since $q \in \dom \interior{D}$ it follows from Lemma~\ref{l:refntd}\ref{l:refntd-0.5}
that $a_n G u_n \in \dom \interior{D}$ and 
$\interior{D} a_n G u_n = \interior{D} \iota^* a_n G u_n = \interior{D} q$.
Because $(a_n)_{n \in \Ni}$ converges to $a$ independent 
of the boundary conditions, we obtain that 
$\lim a_n G u_n = a G u$ weakly in $H_1$.
Since the operator $\interior{D}$ is closed, we obtain that 
$a G u \in \dom \interior{D}$ and $\interior{D} a G u = \interior{D} q$.
Using again Lemma~\ref{l:refntd}\ref{l:refntd-0.5}
one deduces that $\iota^* a G u \in \dom \interior{D}$ and 
$\interior{D} \iota^* a G u = \interior{D} q$.
Hence $(\interior{D} \iota) \iota^* a \iota G u = (\interior{D} \iota) q$.
Since $\interior{D} \iota$ is injective by Lemma~\ref{l:refntd}\ref{l:refntd-1.2},
it follows that $\iota^* a \iota G u = q$.
So $G u = (\iota^* a \iota)^{-1} q$.
Then 
\[
\lim (\iota^* a_n \iota)^{-1} q
= \lim r_n 
= \lim G u_n
= G u
= (\iota^* a \iota)^{-1} q
\]
weakly in $\ran G$.

Finally, since $\sup \|(\iota^* a_n \iota)^{-1}\|_{\cl(\ran G)} < \infty$
and $\ran G \cap \dom \interior{D}$ is dense in 
$\ran G$ by Lemma~\ref{l:refntd}\ref{l:refntd-1}, one 
concludes that $\lim (\iota^* a_n \iota)^{-1} = (\iota^* a \iota)^{-1}$
in the weak operator topology on $\cl(\ran(G))$.
\end{proof}

\begin{remark} \label{rbdgd626}
The above proposition is also valid if $\iota$ is replaced by the 
embedding of a closed subspace of $\ran G$ which contains 
$\ran \interior{G}$.
This is the motivation for the terminology
$(a_n)_{n \in \Ni}$ converges to $a$ independent 
of the boundary conditions.
\end{remark}

The main theorem of this section is as follows.

\begin{thm} \label{tbdgd620}
Let $a,a_1,a_2,\ldots \in \cl(H_1)$, $m,m_1,m_2,\ldots \in \cl(H_0)$ and 
$\mu > 0$.
Suppose that $\RRe a_n \geq \mu I$ for all $n \in \Ni$, 
$\RRe a \geq \mu I$ and $\sup_n \|a_n\|_{\cl(H_1)} < \infty$.
Suppose that $(a_n)_{n \in \Ni}$ converges to $a$ independent 
of the boundary conditions and $\lim m_n = m$ in the weak operator 
topology on $\cl(H_0)$.
Assume that $\ker (m_n - D a_n \interior{G}) = \{ 0 \} $ for all $n \in \Ni$ 
and $\ker (m - D a \interior{G}) = \{ 0 \} $.
Further assume that the inclusion $\dom(G) \subset H_0$ is compact.

For all $n \in \Ni$ let $\Lambda_H^{(n)}$ and $\Lambda_H$ be the 
Dirichlet-to-Neumann operators in $H$ associated with 
$- D a_n G + m_n$ and $- D a G + m$, respectively.
Then one has the following.
\begin{tabel}
\item \label{tbdgd620-1}
The sequence $(\Lambda_H^{(n)})_{n \in \Ni}$ of 
operators  is uniformly sectorial.
\item \label{tbdgd620-2}
$\lim_{n \to \infty} (\lambda I + \Lambda_H^{(n)})^{-1} 
= (\lambda I + \Lambda_H)^{-1}$
in the weak operator topology for all large $\lambda > 0$.
\item \label{tbdgd620-3}
If $\kappa$ is compact, then 
\[
\lim_{n \to \infty} (\lambda I + \Lambda_H^{(n)})^{-1} 
= (\lambda I + \Lambda_H)^{-1} 
\]
uniformly in $\cl(H)$ for all large $\lambda > 0$.
\end{tabel}
\end{thm}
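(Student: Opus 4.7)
The plan is to reformulate the problem via the form representation of Proposition~\ref{p:ADtNrel}: each $\Lambda_H^{(n)}$ is the operator associated, in the sense of Theorem~\ref{t:gen}, with the sesquilinear form
\[
\gotb_n(u,v) = (a_n G u, G v)_{H_1} + (m_n u, v)_{H_0}
\]
on $\dom(G) \times \dom(G)$ and the map $j := \kappa \circ \pi_{\BD(G)} \colon \dom(G) \to H$. The weak operator convergence of $(m_n)_{n \in \Ni}$ together with the uniform boundedness principle gives $M := \sup_n \|m_n\|_{\cl(H_0)} < \infty$, so combined with $\sup_n \|a_n\|_{\cl(H_1)} < \infty$ the forms $\gotb_n$ are uniformly continuous on $\dom(G)$. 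For part~\ref{tbdgd620-1} I aim to establish constants $\lambda_0, c > 0$ independent of $n$ with
\[
\RRe \gotb_n(u,u) + \lambda_0 \|j(u)\|_H^2 \geq c \|u\|_{\dom(G)}^2 \qquad (u \in \dom(G))
\]
via a compactness--contradiction argument: a failing sequence $(u_k, n_k)$ with $\|u_k\|_{\dom(G)} = 1$, $\|j(u_k)\|_H \to 0$ and $\RRe \gotb_{n_k}(u_k, u_k) \to 0$ has, thanks to the compact embedding $\dom(G) \hookrightarrow H_0$, a weak-in-$\dom(G)$ and strong-in-$H_0$ limit $u^*$ with $j(u^*) = 0$; since $\kappa$ is injective, $\pi_{\BD(G)} u^* = 0$ and thus $u^* \in \dom(\interior{G})$. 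The weak convergence of $(m_n)$ combined with the convergence of $(a_n)$ independent of the boundary conditions then identifies $u^* \in \kar(-D a \interior{G} + m) = \{0\}$, giving the contradiction. Together with the uniform continuity, Theorem~\ref{t:gen} yields uniform m-sectoriality of $(\Lambda_H^{(n)} + \lambda_0 I)_{n \in \Ni}$, which is~\ref{tbdgd620-1}.

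For part~\ref{tbdgd620-2}, fix $\lambda > \lambda_0$ and $\psi \in H$, and set $\varphi_n := (\lambda I + \Lambda_H^{(n)})^{-1} \psi$. By Proposition~\ref{p:ADtNrel} one has $\varphi_n = j(u_n)$ with $u_n \in \dom(G)$ satisfying
\[
\gotb_n(u_n, v) + \lambda (j(u_n), j(v))_H = (\psi, j(v))_H \qquad (v \in \dom(G)).
\]
Testing with $v = u_n$ and invoking the uniform bound from part~\ref{tbdgd620-1} yields $\|u_n\|_{\dom(G)} \leq C \|\psi\|_H$; after passing to a subsequence, $u_n \to u$ weakly in $\dom(G)$ and strongly in $H_0$ by compactness. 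Restricting the form identity to $v \in \dom(\interior{G})$, where $j(v) = 0$, shows $u_n \in \dom(D a_n G)$ with $m_n u_n - D a_n G u_n = 0$. Since $m_n u_n \to m u$ weakly in $H_0$ by the weak--strong product, the hypothesis that $(a_n)$ converges to $a$ independent of the boundary conditions (applied with $f_n := -m_n u_n$) yields $a_n G u_n \to a G u$ weakly in $H_1$. Passing to the limit term by term in the full form identity produces $\gotb(u, v) + \lambda (j(u), j(v))_H = (\psi, j(v))_H$ for every $v \in \dom(G)$, so $j(u) = (\lambda I + \Lambda_H)^{-1} \psi$ by Proposition~\ref{p:ADtNrel}, and the standard subsequence argument promotes $\varphi_n = j(u_n) \to j(u)$ weakly in $H$ to the full sequence. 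For part~\ref{tbdgd620-3}, when $\kappa$ is compact so is $j = \kappa \circ \pi_{\BD(G)}$, and the factorisation $\psi \mapsto u(\psi) \mapsto j(u(\psi))$ makes $(\lambda I + \Lambda_H)^{-1}$ compact. If uniform convergence failed, extract unit-norm $\psi_n \in H$ with separation exceeding $\delta$ and a weak limit $\psi$; replaying the preceding scheme with $\psi_n$ in place of $\psi$ and invoking compactness of $j$ upgrades $(\lambda I + \Lambda_H^{(n)})^{-1} \psi_n = j(u_n) \to j(u) = (\lambda I + \Lambda_H)^{-1} \psi$ to strong convergence in $H$, while compactness of $(\lambda I + \Lambda_H)^{-1}$ delivers $(\lambda I + \Lambda_H)^{-1} \psi_n \to (\lambda I + \Lambda_H)^{-1} \psi$ strongly, a contradiction.

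The main obstacle is the uniform $j$-ellipticity required for~\ref{tbdgd620-1}: the merely qualitative nondegeneracy $\kar(m_n - D a_n \interior{G}) = \{0\}$ has to be leveraged into a uniform lower bound, and the delicate step is extracting from the contradicting sequence $(u_k)$ an approximate PDE so that the convergence of $(a_n)$ independent of the boundary conditions applies and pins the weak limit $u^*$ inside $\kar(-D a \interior{G} + m)$. Once~\ref{tbdgd620-1} is in place, \ref{tbdgd620-2} and~\ref{tbdgd620-3} become $\lambda$-shifted analogues of the scheme developed for Theorem~\ref{tbdgd403}, modified to pass limits via the form identity rather than via the first-order system used in Proposition~\ref{pbdgd406}.
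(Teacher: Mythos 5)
The overall architecture (form representation via Proposition~\ref{p:ADtNrel}, a uniform hybrid coercivity estimate, then weak limits plus the convergence-independent-of-boundary-conditions hypothesis for parts~\ref{tbdgd620-2} and~\ref{tbdgd620-3}) is the right one, but the cornerstone you propose for part~\ref{tbdgd620-1} contains a genuine gap: the uniform inequality $\RRe \gotb_n(u,u) + \lambda_0 \|j(u)\|_H^2 \geq c\|u\|_{\dom(G)}^2$ for \emph{all} $u \in \dom(G)$ is false in general, because the $m_n$ are not coercive. In the setting of Example~\ref{ex:classical} with $\Omega$ bounded Lipschitz, $a_n = a = I$ and $m_n = m = -(\lambda_1+\varepsilon)$ constant, where $\lambda_1$ is the first Dirichlet eigenvalue and $\varepsilon>0$ is chosen so that $\lambda_1+\varepsilon$ is not a Dirichlet eigenvalue, all hypotheses of the theorem hold; yet for the first Dirichlet eigenfunction $u \in H^1_0(\Omega) = \ker j$ one has $j(u)=0$ and $\RRe\gotb(u,u) = -\varepsilon\|u\|_{L_2(\Omega)}^2 < 0$, so no choice of $\lambda_0,c$ works. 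For the same reason your appeal to Theorem~\ref{t:gen} is unavailable: it requires coercivity of the (shifted) form on the whole form domain. Moreover, the compactness--contradiction argument you sketch cannot be closed: the contradicting elements $u_k$ are arbitrary members of $\dom(G)$ and satisfy no equation of the form $-D a_{n_k} G u_k = f_k$, so the hypothesis that $(a_n)_{n\in\Ni}$ converges to $a$ independent of the boundary conditions simply does not apply to them. This is precisely the step you flag as ``delicate''; by the counterexample it is not a technical wrinkle but the point where the claimed inequality fails.

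The repair --- and the paper's route --- is to prove the estimate only on the subspaces $V(\gotb_n) = \{u \in \dom(G) : \gotb_n(u,v)=0 \mbox{ for all } v \in \ker j\}$, uniformly in $n$ (Lemmas~\ref{lbdgd621} and~\ref{lbdgs622}). Membership in $V(\gotb_n)$ gives, by testing with $v \in \ker j = \dom(\interior{G})$, exactly the identity $a_n G u \in \dom(D)$ and $D a_n G u = m_n u$; hence the contradiction sequence does satisfy a divergence-form equation, the convergence hypothesis applies, and the weak limit lands in $\ker(m - Da\interior{G}) = \{0\}$ (normalising in $H_0$, where the convergence is strong, yields the contradiction). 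The restricted inequality suffices because every $u$ representing an element of $\dom(\Lambda_H^{(n)})$, and every solution of the resolvent identity in part~\ref{tbdgd620-2}, automatically lies in $V(\gotb_n)$; for m-sectoriality itself one should invoke Corollary~\ref{cbdgd512} (via $V(\gotb_n) \cap \ker j = \ker(m_n - D a_n \interior{G}) = \{0\}$ and the compact inclusion $\dom(G) \subset H_0$) rather than Theorem~\ref{t:gen}. Once part~\ref{tbdgd620-1} is established this way, your parts~\ref{tbdgd620-2} and~\ref{tbdgd620-3} proceed essentially as in the paper; your use of compactness of $(\lambda I + \Lambda_H)^{-1}$ to handle the second term in~\ref{tbdgd620-3} is a harmless variant of the paper's argument.
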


The proof requires a lot of preparation.
Adopt the notation and assumptions of Theorem~\ref{tbdgd620}.
For all $n \in \Ni$ define $\gotb_n \colon \dom(G) \times \dom(G) \to \Ci$ by
\[
\gotb_n(u,v)
= (a_n G u,G v)_{H_1} + (m_n u,v)_{H_0}
\]
and define $V(\gotb_n) = \{ u \in \dom(G) : \gotb_n(u,v) = 0 \mbox{ for all } v \in \ker j \} $.
Define similarly $\gotb$ and $V(\gotb)$.

\begin{lemma} \label{lbdgd621}
For all $\varepsilon > 0$ there exists an $\omega > 0$ such that 
\[
\|u\|_{H_0}^2
\leq \varepsilon \|u\|_{\dom(G)}^2 + \omega \|j(u)\|_H^2
\]
for all $n \in \Ni$ and $u \in V(\gotb_n)$.
\end{lemma}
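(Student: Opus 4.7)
The plan is to argue by contradiction, combining the compactness of the inclusion $\dom(G) \hookrightarrow H_0$ with the hypothesis that $(a_n)_{n \in \Ni}$ converges to $a$ independent of the boundary conditions and the triviality of $\ker(m - Da\interior{G})$.

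Suppose the conclusion fails for some $\varepsilon > 0$; then for each $k \in \Ni$ there exist $n_k \in \Ni$ and $u_k \in V(\gotb_{n_k})$ with $\|u_k\|_{H_0}^2 > \varepsilon \|u_k\|_{\dom(G)}^2 + k\|j(u_k)\|_H^2$. After normalising to $\|u_k\|_{H_0} = 1$ and passing to a subsequence, the compact embedding produces $u \in \dom(G)$ with $u_k \to u$ weakly in $\dom(G)$ and strongly in $H_0$; in particular $\|u\|_{H_0} = 1$, and weak continuity of $j$ together with $j(u_k) \to 0$ in $H$ forces $j(u) = 0$, so $u \in \ker j = \dom(\interior{G})$. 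By a further extraction I may assume that either $(n_k)$ is strictly increasing or it is constant, equal to some $n_0$.

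The key algebraic step is to rewrite $u_k \in V(\gotb_{n_k})$ in strong form: testing the form against $v \in \dom(\interior{G})$ and using $(\interior{G})^* = -D$ (as in the proof of Proposition~\ref{pbdgd303}) gives $u_k \in \dom(Da_{n_k}G)$ with $Da_{n_k}Gu_k = m_{n_k}u_k$. By Banach--Steinhaus $(m_n)$ is uniformly bounded in $\cl(H_0)$, and combining weak operator convergence $m_{n_k} \to m$ with the strong convergence $u_k \to u$ in $H_0$ yields $m_{n_k}u_k \to mu$ weakly in $H_0$. In the strictly increasing case the triples $(u_k,-m_{n_k}u_k)$ satisfy the hypotheses of \eqref{eq:convIndepBd}, so the independent-of-boundary-conditions hypothesis delivers $u \in \dom(DaG)$ with $mu - DaGu = 0$; in the constant case, $a_{n_0}Gu_k \to a_{n_0}Gu$ weakly in $H_1$ and weak closedness of $D$ directly gives $u \in \dom(Da_{n_0}G)$ with $m_{n_0}u - Da_{n_0}Gu = 0$. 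Since $u \in \dom(\interior{G})$ we have $Gu = \interior{G}u$, so $u$ lies in $\ker(m - Da\interior{G})$ or $\ker(m_{n_0} - Da_{n_0}\interior{G})$, both $\{0\}$ by the hypothesis of Theorem~\ref{tbdgd620}. This contradicts $\|u\|_{H_0} = 1$.

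The main obstacle I expect is the mixed weak--strong limit $m_{n_k}u_k \to mu$: it is where the compactness of $\dom(G) \hookrightarrow H_0$ is indispensable (to upgrade $u_k \to u$ to a strong limit in $H_0$) and where the uniform boundedness of $(m_n)$ (via Banach--Steinhaus) enters; without it, the ``independent of the boundary conditions'' premise could not be triggered on $(u_k,-m_{n_k}u_k)$.
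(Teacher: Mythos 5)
Your proof is correct and follows essentially the same route as the paper: contradiction, normalisation, the compact embedding $\dom(G)\hookrightarrow H_0$, rewriting $u_k\in V(\gotb_{n_k})$ in strong form via $(\interior{G})^*=-D$, the mixed weak--strong limit $m_{n_k}u_k\to mu$, the convergence-independent-of-the-boundary-conditions hypothesis, and the triviality of $\ker(m-Da\interior{G})$. The only cosmetic difference is the index bookkeeping: the paper first records the fixed-$n$ estimate and then assumes without loss of generality that the failing indices run through all of $\Ni$, whereas you split into a strictly increasing and a constant subsequence and settle the constant case directly via weak closedness of $D$; both are fine.
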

\begin{proof}
Let $n \in \Ni$.
Since $\ker (m_n - D a_n \interior{G}) = \{ 0 \} $, the 
restriction $j|_{V(\gotb_n)}$ is injective.
Because also the inclusion $\dom(G) \subset H_0$ is compact, it follows that 
for all $\varepsilon > 0$ there exists an $\omega > 0$ such that 
\[
\|u\|_{H_0}^2
\leq \varepsilon \|u\|_{\dom(G)}^2 + \omega \|j(u)\|_H^2
\]
for all $u \in V(\gotb_n)$.
We next show that one can choose $\omega$ uniformly in~$n$.

Suppose the lemma is false.
Then without loss of generality and passing to a subsequence if necessary
there exist $\varepsilon > 0$ and for all $n \in \Ni$ there exists a $u_n \in V(\gotb_n)$
such that 
\[
\|u_n\|_{H_0}^2
> \varepsilon \|u_n\|_{\dom(G)}^2 + n \|j(u_n)\|_H^2
.  \]
Without loss of generality we may assume that $\|u_n\|_{H_0} = 1$ for all $n \in \Ni$.
Then $\varepsilon \|u_n\|_{\dom(G)}^2 \leq 1$ for all $n \in \Ni$, 
so the sequence $(u_n)_{n \in \Ni}$ is bounded in $\dom(G)$.
Passing to a subsequence if necessary there exists a $u \in \dom(G)$ 
such that $\lim u_n = u$ weakly in $\dom(G)$.
Since the inclusion $\dom(G) \subset H_0$ is compact it follows that 
$u = \lim u_n$ in $H_0$.
In particular $\|u\|_{H_0} = 1$ and $u \neq 0$.
Also $j(u) = \lim j(u_n) = 0$ in $H$, so $u \in \ker j = \dom(\interior{G})$.

If $n \in \Ni$, then $(a_n G u_n,\interior{G} v)_{H_1} = - (m_n u_n,v)_{H_0}$
for all $v \in \dom(\interior{G}) = \ker j$, since $u_n \in V(\gotb_n)$.
Therefore $a_n G u_n \in \dom((\interior{G})^*) = \dom(D)$ and 
$- D a_n G u_n = - m_n u_n$.
Next $\lim m_n u_n = m u$ weakly in $H_0$.
Since $(a_n)_{n \in \Ni}$ converges to $a$ independent 
of the boundary conditions one deduces that $a G u \in \dom(D)$ and 
$- D a G u = - m u$.
Then $u \in \ker(m - D a \interior{G}) = \{ 0 \} $.
So $u = 0$. 
This is a contradiction.
\end{proof}

\begin{lemma} \label{lbdgs622}
There exist $\tilde \mu,\omega > 0$ such that 
\[
\tilde \mu \|u\|_{\dom(G)}^2
\leq \RRe \gotb_n(u) + \omega \|j(u)\|_H^2
\]
for all $n \in \Ni$ and $u \in V(\gotb_n)$.
\end{lemma}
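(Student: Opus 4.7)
The plan is to combine the crude lower bound on $\RRe \gotb_n$ coming from the coercivity of $a_n$ with the uniform ``$\|u\|_{H_0}$-is-small-modulo-$j(u)$'' estimate from Lemma~\ref{lbdgd621}, absorbing the negative $\|u\|_{H_0}^2$ term into the $\|u\|_{\dom(G)}^2$ term.

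First, I would note that $M := \sup_n \|m_n\|_{\cl(H_0)} < \infty$. This follows from the uniform boundedness principle applied twice: weak operator convergence $\lim m_n = m$ means $\lim(m_n x, y)_{H_0} = (mx, y)_{H_0}$ for all $x,y \in H_0$, whence the sequence $(m_n x)_{n \in \Ni}$ is weakly bounded, hence norm-bounded for each $x$, and then $(m_n)_{n \in \Ni}$ is itself norm-bounded. Coercivity of $a_n$ together with this uniform bound then gives, for every $u \in \dom(G)$,
\[
\RRe \gotb_n(u)
\geq \mu \|Gu\|_{H_1}^2 - M \|u\|_{H_0}^2
= \mu \|u\|_{\dom(G)}^2 - (\mu + M) \|u\|_{H_0}^2,
\]
where I have used $\|Gu\|_{H_1}^2 = \|u\|_{\dom(G)}^2 - \|u\|_{H_0}^2$.

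Next I would invoke Lemma~\ref{lbdgd621} with the specific choice $\varepsilon = \frac{\mu}{2(\mu + M)}$. This provides an $\omega_1 > 0$ such that
\[
\|u\|_{H_0}^2 \leq \frac{\mu}{2(\mu + M)} \|u\|_{\dom(G)}^2 + \omega_1 \|j(u)\|_H^2
\]
for all $n \in \Ni$ and all $u \in V(\gotb_n)$. Multiplying by $(\mu + M)$ and inserting into the previous inequality yields
\[
\RRe \gotb_n(u)
\geq \mu \|u\|_{\dom(G)}^2 - \tfrac{\mu}{2} \|u\|_{\dom(G)}^2 - (\mu + M)\omega_1 \|j(u)\|_H^2
= \tfrac{\mu}{2} \|u\|_{\dom(G)}^2 - (\mu + M)\omega_1 \|j(u)\|_H^2,
\]
so that setting $\tilde \mu = \mu/2$ and $\omega = (\mu + M)\omega_1$ finishes the proof.

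There is no real obstacle here; the entire content of the lemma is encoded in Lemma~\ref{lbdgd621}, which itself is where the injectivity of $j$ on $V(\gotb_n)$, the compactness of $\dom(G) \hookrightarrow H_0$, and the hypothesis of convergence of $a_n$ independent of the boundary conditions are used. The only subtlety worth mentioning explicitly is the appeal to Banach--Steinhaus to obtain $M < \infty$ from the weak operator convergence $\lim m_n = m$; everything else is algebraic manipulation of the two inequalities above.
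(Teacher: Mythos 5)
Your proof is correct and is essentially the paper's own argument: both use the coercivity of $a_n$ together with a uniform bound $\sup_n\|m_n\|_{\cl(H_0)}<\infty$ to get $\RRe\gotb_n(u)\geq\mu\|u\|_{\dom(G)}^2-(\mu+M)\|u\|_{H_0}^2$, and then absorb the $\|u\|_{H_0}^2$ term via Lemma~\ref{lbdgd621} with the same choice $\varepsilon=\mu/(2(\mu+M))$. Your explicit appeal to Banach--Steinhaus to justify $M<\infty$ is a welcome detail that the paper leaves implicit.
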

\begin{proof}
Let $\tilde \omega = \mu + \sup_n \|m_n\|_{\cl(H_0)}$.
Then 
\[
\mu \|u\|_{\dom(G)}^2
\leq \RRe (a_n G u, G u)_{H_1} + \mu \|u\|_{H_0}^2
\leq \RRe \gotb_n(u) + \tilde \omega \|u\|_{H_0}^2
\]
for all $n \in \Ni$ and $u \in \dom(G)$.

Choose $\varepsilon = \frac{\mu}{2 \tilde \omega}$
and let $\omega > 0$ be as in Lemma~\ref{lbdgd621}.
Let $n \in \Ni$ and $u \in V(\gotb_n)$.
Then 
\begin{eqnarray*}
\mu \|u\|_{\dom(G)}^2
& \leq & \RRe \gotb_n(u) + \tilde \omega \|u\|_{H_0}^2  \\
& \leq & \RRe \gotb_n(u) 
   + \tilde \omega 
     \Big( \frac{\mu}{2 \tilde \omega} \|u\|_{\dom(G)}^2 + \omega \|j(u)\|_H^2 \Big)  \\
& = & \RRe \gotb_n(u) + \frac{\mu}{2} \|u\|_{\dom(G)}^2
    + \omega \tilde \omega \|j(u)\|_H^2
.
\end{eqnarray*}
So 
\[
\frac{\mu}{2} \|u\|_{\dom(G)}^2
\leq \RRe \gotb_n(u) 
    + \omega \tilde \omega \|j(u)\|_H^2
\]
and the lemma follows.
\end{proof}

Now we are able to prove Theorem~\ref{tbdgd620}.

\begin{proof}[{\bf Proof of Theorem~\ref{tbdgd620}.}]
Let $\tilde \mu,\omega > 0$ be as in Lemma~\ref{lbdgs622}.

`\ref{tbdgd620-1}'.
Set $c = \sup_{n \in \Ni} (\|a_n\|_{\cl(H_1)} + \|m_n\|_{\cl(H_0)})$.
Let $n \in \Ni$ and $\varphi \in \dom(\Lambda_H^{(n)})$.
There exists a $u \in \dom(G)$ such that 
$j(u) = \varphi$ and $\gotb_n(u,v) = (\Lambda_H^{(n)} \varphi, j(v))_H$
for all $v \in \dom(G)$.
Then $u \in V(\gotb_n)$ and
$((\Lambda_H^{(n)} + \omega I) \varphi,\varphi)_H = \gotb_n(u) + \omega \|j(u)\|_H^2$,
so 
$\RRe ((\Lambda_H^{(n)} + \omega I) \varphi,\varphi)_H 
   \geq \tilde \mu \|u\|_{\dom(G)}^2$.
Therefore 
\[
|\IIm  ((\Lambda_H^{(n)} + \omega I) \varphi,\varphi)_H |
= |\IIm \gotb_n(u)|
\leq c \|u\|_{\dom(G)}^2
\leq \frac{c}{\tilde \mu} \RRe ((\Lambda_H^{(n)} + \omega I) \varphi,\varphi)_H 
.  \]
Hence the operators $\Lambda_H^{(n)}$ are sectorial with vertex $- \omega$
and semi-angle $\arctan \frac{c}{\tilde \mu}$,
uniformly in~$n$.

`\ref{tbdgd620-2}'.
In order not to repeat part of the proof in Statement~\ref{tbdgd620-3}
we first prove something more general.
Let $\lambda > \omega$.
Let $\psi,\psi_1,\psi_2,\ldots \in H$ and suppose that $\lim \psi_n = \psi$
weakly in $H$.
We shall prove that 
$\lim (\lambda I + \Lambda_H^{(n)})^{-1} \psi_n = (\lambda I + \Lambda_H)^{-1} \psi$
weakly in $H$.

Let $n \in \Ni$.
Set $\varphi_n = (\lambda I + \Lambda_H^{(n)})^{-1} \psi_n$.
There exists a $u_n \in V(\gotb_n)$ such that $j(u_n) = \varphi_n$
and 
\begin{equation}
\gotb_n(u_n,v) + \lambda (j(u_n), j(v))_H
= (\psi_n, j(v))_H
\label{etbdgd620;2}
\end{equation}
for all $v \in \dom(G)$.
Choose $v = u_n$.
Then Lemma~\ref{lbdgs622} gives
\[
\tilde \mu \|u_n\|_{\dom(G)}^2
\leq \RRe \gotb_n(u_n) + \lambda \|j(u_n)\|_H^2
= \RRe (\psi_n, j(u_n))_H
\leq \|\psi_n\|_H \, \|j\|_{\cl(\dom(G),H)} \, \|u_n\|_{\dom(G)}
.  \]
So $\|u_n\|_{\dom(G)} \leq \tilde \mu^{-1} \|\psi_n\|_H \, \|j\|_{\cl(\dom(G),H)}$.
Since the sequence $(\psi_n)_{n \in \Ni}$ is bounded in $H$, the 
sequence $(u_n)_{n \in \Ni}$ is bounded in $\dom(G)$.
Passing to a subsequence if necessary, there exists a $u \in \dom(G)$ 
such that $\lim u_n = u$ weakly in $\dom(G)$.
Since the inclusion $\dom(G) \subset H_0$ is compact one deduces that 
$\lim u_n = u$ in $H_0$.
Then $\lim m_n u_n = m u$ weakly in $H_0$.
Moreover, $\lim \varphi_n = \lim j(u_n) = j(u)$ weakly in $H$.
Next we show that $j(u) = (\lambda I + \Lambda_H)^{-1} \psi$.

Let $n \in \Ni$.
If $v \in \ker j = \dom(\interior{G})$, then 
$\gotb_n(u_n,v) = 0$, so 
$(a_n G u_n, \interior{G} v)_{H_1} = - (m_n u_n, v)_{H_0}$.
Hence $a_n G u_n \in \dom((\interior{G})^*) = \dom(D)$ and 
$- D a_n G u_n = - m_n u_n$.
In particular, $u_n \in \dom(D a_n G)$.
Moreover, $\lim u_n = u$ weakly in $\dom(G)$ and 
$\lim m_n u_n = m u$ weakly in $H_0$.
Since $(a_n)_{n \in \Ni}$ converges to $a$ independent 
of the boundary conditions, one deduces that 
$\lim a_n G u_n = a G u$ weakly in $H_1$.

Let $v \in \dom(G)$.
If $n \in \Ni$, then (\ref{etbdgd620;2}) gives
\[
(a_n G u_n, G v)_{H_1} + (m_n u_n, v)_{H_0} + \lambda (j(u_n), j(v))_H
= (\psi_n, j(v))_H
.  \]
Taking the limit $n \to \infty$ one establishes
\[
(a G u, G v)_{H_1} + (m u, v)_{H_0} + \lambda (j(u), j(v))_H
= (\psi, j(v))_H
.  \]
So $\gotb(u,v) + \lambda (j(u), j(v))_H = (\psi, j(v))_H$.
Therefore $j(u) \in \dom(\Lambda_H)$ and 
$(\lambda I + \Lambda_H) j(u) = \psi$.
With the usual subsequence argument we proved that 
$\lim (\lambda I + \Lambda_H^{(n)})^{-1} \psi_n = (\lambda I + \Lambda_H)^{-1} \psi$
weakly in $H$.
Now Statement~\ref{tbdgd620-2} follows by choosing $\psi_n = \psi$ for all $n \in \Ni$.

`\ref{tbdgd620-3}'.
Finally suppose that $\kappa$ is compact.
Then also $j$ is compact.
Let $\lambda > \omega$.
Suppose 
$\lim (\lambda I + \Lambda_H^{(n)})^{-1} = (\lambda I + \Lambda_H)^{-1}$ in $\cl(H)$
is false.
Passing to a subsequence if necessary, there exist $\delta > 0$ 
and $\psi_1,\psi_2,\ldots \in H$ such that 
\[
\|(\lambda I + \Lambda_H^{(n)})^{-1} \psi_n - (\lambda I + \Lambda_H)^{-1} \psi_n\|_H
> \delta \|\psi_n\|_H
\]
for all $n \in \Ni$.
Without loss of generality we may assume that $\|\psi_n\|_H = 1$
for all $n \in \Ni$.
Passing again to a subsequence if necessary, there exists a $\psi \in H$ 
such that $\lim \psi_n = \psi$ weakly in $H$.
Let $u_n \in V(\gotb_n)$ and $u \in \dom(G)$ be as in Part~\ref{tbdgd620-2} for all $n \in \Ni$.
Then $\lim u_n = u$ weakly in $\dom(G)$, so 
\[
\lim_{n \to \infty} (\lambda I + \Lambda_H^{(n)})^{-1} \psi_n
= \lim_{n \to \infty} j(u_n) 
= j(u) 
= (\lambda I + \Lambda_H)^{-1} \psi
\]
in $H$ by the compactness of~$j$.
Similarly 
$\lim_{n \to \infty} (\lambda I + \Lambda_H^{(n)})^{-1} \psi 
= (\lambda I + \Lambda_H)^{-1} \psi$ in $H$.
So 
\[
\lim_{n \to \infty} 
\|(\lambda I + \Lambda_H^{(n)})^{-1} \psi_n - (\lambda I + \Lambda_H)^{-1} \psi_n\|_H = 0
.  \]
This is a contradiction.
\end{proof}

Note that the limit Dirichlet-to-Neumann graph $\Lambda_H$ is an 
operator in Theorem~\ref{tbdgd620}.
In \cite{AEKS} Theorem~5.11 a different condition on the $a_n$ is used 
to obtain resolvent convergence for symmetric operators/graphs, 
but possibly multi-valued limit graph $\Lambda_H$.
Since we do not wish to require symmetry in Theorem~\ref{tbdgd620}
and we need that the limit graph $\Lambda_H$ is m-sectorial, 
we require conveniently that all graphs are single-valued. 
See also the discussion at the end of Section~\ref{Sbdgd5}.

Finally we compare various conditions on the $a_n$ in the classical case.

\begin{example} \label{xbdgd630}
In this example we characterise the condition
$\lim_{n \to \infty} (\iota^* a_n\iota)^{-1} = (\iota^* a\iota)^{-1}$
in the weak operator topology of $\cl(\ran(G))$ in Theorem~\ref{tbdgd403}
for the classical case of Example~\ref{ex:classical} and real symmetric 
coefficients.

Let $\Omega \subset \Ri^d$ be open, bounded and connected.
Assume that $H^1(\Omega)$ embeds compactly into $L_2(\Omega)$.
Further, let $H_0$, $H_1$, $G$ and $D$ be as in Example~\ref{ex:classical}.
We identify an element of $L_\infty(\Omega,\Ci^{d \times d})$ with an
element of $\cl(H_1)$ in the natural way.
Let $a,a_1,a_2,\ldots\in L_\infty(\Omega,\Ri^{d \times d})$.
Suppose that $a_n = a_n^* \geq \mu I$ for all $n \in \Ni$
and $\sup_n \|a_n\|_{\cl(H_1)} < \infty$.
Moreover, assume that $a = a^* \geq \mu I$.
We emphasise that the $a_n$ and $a$ are real valued and symmetric.
Then the following three conditions are equivalent.
\begin{tabel}
\item \label{xbdgd630-1}
The sequence $(a_n)_{n \in \Ni}$ is $H$-convergent to~$a$.
\item \label{xbdgd630-2}
The sequence $(a_n)_{n \in \Ni}$ converges to $a$ independent 
of the boundary conditions.
\item \label{xbdgd630-3}
$\lim_{n \to \infty} (\iota^* a_n\iota)^{-1} = (\iota^* a\iota)^{-1}$
in the weak operator topology of $\cl(\ran(G))$.
\end{tabel}
We proved the implications 
\ref{xbdgd630-1}$\Rightarrow$\ref{xbdgd630-2}$\Rightarrow$\ref{xbdgd630-3}
in Example~\ref{xbdgd625} and Proposition~\ref{pbdgd625}.
So it remains to show the implcation \ref{xbdgd630-3}$\Rightarrow$\ref{xbdgd630-1}.

Suppose that $\lim_{n \to \infty} (\iota^* a_n\iota)^{-1} = (\iota^* a \iota)^{-1}$
in the weak operator topology of $\cl(\ran(G))$.
Since $a_n \geq \mu I$ for all $n \in \Ni$ and
$\sup_n \|a_n\|_{\cl(H_1)} < \infty$, it follows from 
\cite{Tartar} Theorem~6.5 that the sequence $(a_n)_{n \in \Ni}$ has a 
subsequence $(a_{n_k})_{k \in \Ni}$ which is $H$-convergent.
Hence there exist $b \in L_\infty(\Omega,\Ri^{d \times d})$ and $\nu > 0$
such that $\RRe b \geq \nu I$ and 
the sequence $(a_{n_k})_{k \in \Ni}$ is $H$-convergent to $b$.
Then $b = b^*$ by \cite{Tartar} Lemma~10.2.
It follows from the implication \ref{xbdgd630-1}$\Rightarrow$\ref{xbdgd630-3}
that $\lim_{k \to \infty} (\iota^* a_{n_k} \iota)^{-1} = (\iota^* b \iota)^{-1}$
in the weak operator topology of $\cl(\ran(G))$.
Therefore $(\iota^* a\iota)^{-1} = (\iota^* b \iota)^{-1}$
and $(\iota^* a\iota) = (\iota^* b \iota)$.
So
\[
(a \grad u,\grad v)_{L_2(\Omega)^d} 
= (b \grad u,\grad v)_{L_2(\Omega)^d}
\] 
for all $u,v \in H^1(\Omega)$.
Write $c = a - b$.
Then $(c \grad u,\grad v)_{L_2(\Omega)^d} = 0$ for all $u,v \in H^1(\Omega)$.
We shall show that $c = 0$.
Let $\tau \in C_c^\infty(\Omega)$ and $\xi \in \Ri^d$.
For all $\lambda \geq 1$ define $u_\lambda \in C_c^\infty(\Omega)$
by $u_\lambda(x) = \tau(x) \, e^{i \lambda \xi \cdot x}$.
Then 
\[
0 = (c \grad u_\lambda,\grad u_\lambda)_{L_2(\Omega)^d}
= (c ( \grad \tau + i \lambda \tau \xi), ( \grad \tau + i \lambda \tau \xi) )_{L_2(\Omega)^d}
.  \]
Dividing by $\lambda^2$ and taking the limit $\lambda \to \infty$ gives
$\int_\Omega |\tau(x)|^2 \langle c(x) \xi, \xi \rangle_{\Ci^d} \, dx = 0$.
This implies that $\langle c(x) \xi, \xi \rangle_{\Ci^d} = 0$ for almost all $x \in \Omega$.
Since $\Ri^d$ is separable and $c = c^*$, this implies that 
$c = 0$ almost everywhere.
So $b = a$ almost everywhere.
We proved that the sequence $(a_{n_k})_{k \in \Ni}$ is $H$-convergent to $a$.

It follows similarly that every subsequence of $(a_n)_{n \in \Ni}$ 
has a subsubsequence which is $H$-convergent to $a$.
Since the topology of $H$-convergence is metrisable by the discussion 
after Definition~6.4 in \cite{Tartar} one concludes that 
the sequence $(a_n)_{n \in \Ni}$ is $H$-convergent to~$a$.
This completes the proof of the implication  \ref{xbdgd630-3}$\Rightarrow$\ref{xbdgd630-1}.

In \cite{Waurick2} Theorem~1.2 it is proved that the three equivalent conditions
are also equivalent to a version of Condition~\ref{xbdgd630-3}, 
where $\grad$ is replaced by $\interior{\grad}$ and $\iota$ by the 
embedding $\rge(\interior{\grad}) \subset H_1$; see also Remark \ref{rbdgd626}.
\end{example}

\begin{remark}  \label{rbdgd421}
In the situation of Example~\ref{xbdgd630}, we deduce that convergence of 
$(a_n)_{n \in \Ni}$ in the weak$^*$ topology of $L_\infty(\Omega,\Ri^{d \times d})$ neither 
implies nor is implied by $\lim(\iota^* a_n \iota)^{-1}=(\iota^* a\iota)^{-1}$ 
in the weak operator topology of $\cl(\rge(G))$.
One may also consult \cite[Examples 3.2 and 3.4]{Waurick3} on this.
 \end{remark}

\section{More examples} \label{Sbdgd7}

The first example is from linearized elasticity.

\begin{exam} \label{xbdgd601}
Let $\Omega \subset \Ri^d$ be open.
Set 
\[
L_{2,{\rm sym}}(\Omega)
= \{ S \in L_2(\Omega)^{d \times d} : S^T = S \mbox{ a.e.} \}
.  \]
Choose $H_0 = L_2(\Omega)^d$ and $H_1 = L_{2,{\rm sym}}(\Omega)$.
Define $\widehat G \colon C_c^\infty(\Omega)^d \to L_{2,{\rm sym}}(\Omega)$ by
\[
(\widehat G u)_{kl} 
= \frac{1}{2} \Big( \partial_k u_l + \partial_l u_k \Big)
.  \]
Further define 
$\widehat D \colon C_c^\infty(\Omega)^{d \times d} \cap L_{2,{\rm sym}}(\Omega) \to L_2(\Omega)^d$
by 
\[
(\widehat D q)_k 
= \sum_{l=1}^d \partial_l q_{kl}
.  \]
Then $\dom(\widehat G)$ is dense in $H_0$ and 
$\dom(\widehat D)$ is dense in $H_1$.
Moreover, using integration by parts one deduces that (\ref{exbdgs201;1}) is valid.
Then one can apply Example~\ref{xbdgs201}.

Korn's first inequality implies that 
$\|\partial_k u_l\|_{L_2(\Omega)} \leq \sqrt{2} \|\widehat G u\|_{H_1}$
for all $u \in C_c^\infty(\Omega)^d$ and $k,l \in \{ 1,\ldots,d \} $.
So $\dom(\interior{G}) \subset H^1_0(\Omega)$.
In particular the inclusion $\dom(\interior{G}) \subset H_0$ is compact if
$\Omega$ is bounded.

Under some regularity conditions on the boundary of $\Omega$,
Korn's second inequality states that there exists a $c > 0$ such that 
$\|\partial_k u_l\|_{L_2(\Omega)} \leq c \|u\|_{\dom(G)}$
for all $u \in \dom(G)$ and $k,l \in \{ 1,\ldots,d \} $.
For example, if $\Omega$ is bounded with a Lipschitz boundary, then 
Korn's second inequality is valid.
For an easy proof see \cite{Nitsche} Section~3.
If Korn's second inequality is valid, then $\dom(G) \subset H^1(\Omega)^d$.
Consequently, if Korn's second inequality is valid and $\Omega$ has a continuous 
boundary, then the inclusion $H^1(\Omega) \subset L_2(\Omega)$ is compact
and hence the inclusion $\dom(G) \subset H_0$ is compact.
We point out that Korn's second inequality is not a necessary condition for the inclusion 
$\dom(G) \subset H_0$ to be compact, see \cite{Weck1} Theorem~1.

In particular, suppose $\Omega$ is bounded with a Lipschitz boundary
and write $\Gamma = \partial \Omega$.
Let $\sigma \in (-\infty,\frac{1}{2}]$ and set $H = H^\sigma(\Gamma)^d$.
Then $\Tr u \in H$ for all $u \in \dom(G)$.
Moreover, $\Tr|_{\BD(G)} \colon \BD(G) \to H$ is injective and has dense range.
So one can consider as in Section~\ref{Sbdgd3} a Dirichlet-to-Neumann operator
in~$H$.
Note that $\Tr|_{\BD(G)}$ is compact if $\sigma < \frac{1}{2}$.
\end{exam}

The second example is from electro-magneto statics.

\begin{exam} \label{xbdgd603}
Let $\Omega \subset \Ri^3$ be open.
Using integration by parts one deduces that 
\[
(\curl u, v)_{L_2(\Omega)^3} 
= (u, \curl v)_{L_2(\Omega)^3} 
\]
for all $u,v \in C_c^\infty(\Omega)^3$.
Therefore let $H_0 = H_1 = L_2(\Omega)^3$
and define $\widehat G = \widehat D \colon C_c^\infty(\Omega)^3 \to L_2(\Omega)^3$
by $\widehat G u = \widehat D u = i \curl u$.
Then (\ref{exbdgs201;1}) is satisfied.
Using the construction in Example~\ref{xbdgs201} one obtains a new example.
\end{exam}

\subsection*{Acknowledgements}
The authors wish to thank the referee to raise questions which improved the paper.
The third named author 
expresses his gratitude for the wonderful atmosphere and hospitality
extended to him during a two months research visit at the University
of Auckland.
Part of this work is supported by the Marsden Fund Council from Government funding,
administered by the Royal Society of New Zealand.
Part of this work is supported by the EU Marie Curie IRSES program, Project ÔAOSÕ, No.~318910.
Part of this work was carried out with financial support of the EPSRC
grant EP/L018802/2: Mathematical foundations of metamaterials:
homogenisation, dissipation and operator theory.

\newpage

\noindent
A.F.M. ter Elst \\
University of Auckland\\
Department of Mathematics \\
Auckland 1142  \\ 
New Zealand\\
{\tt terelst@math.auckland.ac.nz}
\medskip

\noindent
G. Gordon \\
University of Auckland\\
Department of Mathematics\\
Auckland 1142\\
New Zealand\\
{\tt g.gordon@auckland.ac.nz}

\medskip

\noindent
Marcus Waurick \\
Department of Mathematics and Statistics\\
University of Strathclyde\\
Livingstone Tower\\
26 Richmond Street\\
Glasgow G1 1XH\\
UK\\
{\tt marcus.waurick@strath.ac.uk}

\end{document}